\definecolor{cite}{rgb}{0.30,0.60,1.00}
\definecolor{url}{rgb}{0.00,0.00,0.80}
\definecolor{link}{rgb}{0.40,0.10,0.20}
\DeclareSymbolFont{cyrletters}{OT2}{wncyr}{m}{n}
\DeclareMathSymbol{\Sha}{\mathalpha}{cyrletters}{"58}
\providecommand*{\Dashv}{%
  \mathrel{%
    \mathpalette\@Dashv\vDash
  }%
}
\newcommand*{\@Dashv}[2]{%
  \reflectbox{$\m@th#1#2$}%
}
\numberwithin{equation}{section}
\theoremstyle{plain}
\newtheorem{proposition}{Proposition}[section]
\newtheorem{lem}[proposition]{Lemma}
\newtheorem{theorem}[proposition]{Theorem}
\theoremstyle{definition}
\newtheorem{definition}[proposition]{Definition}
\newtheorem{notation}[proposition]{Notation}
\newtheorem{assumption}[proposition]{Assumption}
\theoremstyle{remark}
\newtheorem{remark}[proposition]{Remark}
\renewcommand{\b}[1]{\mathbf{#1}}
\renewcommand{\c}[1]{\mathcal{#1}}
\renewcommand{\d}[1]{\mathbb{#1}}
\newcommand{\f}[1]{\mathfrak{#1}}
\renewcommand{\r}[1]{\mathrm{#1}}
\renewcommand{\(}{\left(}
\renewcommand{\)}{\right)}
\newcommand{\res}{\mathbin{|}}
\renewcommand{\leq}{\leqslant}
\renewcommand{\geq}{\geqslant}
\newcommand{\bw}{\b w}
\newcommand{\cB}{\c B}
\newcommand{\cH}{\c H}
\newcommand{\cI}{\c I}
\newcommand{\cL}{\c L}
\newcommand{\cS}{\c S}
\newcommand{\cT}{\c T}
\newcommand{\cV}{\c V}
\newcommand{\cZ}{\c Z}
\newcommand{\dA}{\d A}
\newcommand{\dC}{\d C}
\newcommand{\dQ}{\d Q}
\newcommand{\dZ}{\d Z}
\newcommand{\fS}{\f S}
\newcommand{\fW}{\f W}
\newcommand{\fc}{\f c}
\newcommand{\fe}{\f e}
\newcommand{\ff}{\f f}
\newcommand{\fp}{\f p}
\newcommand{\rI}{\r I}
\newcommand{\rU}{\r U}
\newcommand{\rd}{\,\r d}
\newcommand{\rt}{\r t}
\newcommand{\tc}{\mathtt{c}}
\newcommand{\tp}[1]{\prescript{\rt}{}{#1}}
\newcommand{\CF}{\mathbbm{1}}
\newcommand{\dtm}{\r{det}\:}
\newcommand{\Herm}{\r{Herm}}
\newcommand{\RE}{\r{Re}\,}
\DeclareMathOperator{\BC}{BC}
\DeclareMathOperator{\Gal}{Gal}
\DeclareMathOperator{\GL}{GL}
\DeclareMathOperator{\Ind}{Ind}
\DeclareMathOperator{\Mat}{Mat}
\DeclareMathOperator{\Res}{Res}
\DeclareMathOperator{\Sph}{Sph}
\DeclareMathOperator{\Spec}{Spec}
\DeclareMathOperator{\tr}{tr}
\DeclareMathOperator{\Tr}{Tr}
\DeclareMathOperator{\vol}{vol}
\begin{document}

\title[Theta correspondence for almost unramified representations]
{Theta correspondence for almost unramified representations of unitary groups}

\author{Yifeng Liu}
\address{Institute for Advanced Study in Mathematics, Zhejiang University, Hangzhou 310058, China}
\email{liuyf0719@zju.edu.cn}

\date{\today}
\subjclass[2010]{11F27, 11F66}

\begin{abstract}
  In this note, we introduce the notion of almost unramified representations of quasi-split unitary groups of even ranks with respect to an unramified quadratic extension of local fields, and study their behavior under the local theta correspondence. These representations are in some sense the least ramified representations that have root number $-1$.
\end{abstract}

\maketitle

\tableofcontents

\section{Introduction}

We fix an unramified quadratic extension $E/F$ of nonarchimedean local fields not of characteristic 2, with $\tc\in\Gal(E/F)$ the Galois involution and $q$ the residue cardinality of $F$. Consider a (nondegenerate) hermitian space $V$ over $E$ (with respect to $\tc$) of even dimension. We may attach a sign $\epsilon=\epsilon(V)\in\{\pm\}$ such that $V$ admits an $O_E$-lattice $\Lambda_V$ satisfying that $\Lambda_V$ is a subgroup of $\Lambda_V^\vee$ of index $q^{1-(\epsilon 1)}$, where $\Lambda_V^\vee$ is the integral dual lattice of $\Lambda_V$.\footnote{Since $E/F$ is unramified, $\Lambda_V$ is necessarily a maximal integral $O_E$-lattice; and moreover, our definition of the sign coincides with the usual one, for example, \cite{HKS96}*{(0.5)}.} Denote by $H_V$ the unitary group of $V$, which is a reductive group over $F$. We fix a lattice $\Lambda_V$ as above, which is unique up to conjugation in $H_V(F)$, and denote its stabilizer in $H_V(F)$ by $L_V$ which is a special maximal compact subgroup. We have the Hecke algebra
\[
\cH_V\coloneqq\dC[L_V\backslash H_V(F)/L_V]
\]
with respect to the Haar measure on $H_V(F)$ that gives $L_V$ volume $1$, which is a commutative complex algebra.

On the other hand, we take an integer $r\geq 0$ and equip $W_r\coloneqq E^{2r}$ with the skew-hermitian form given by the matrix $\(\begin{smallmatrix}&1_r\\ -1_r &\end{smallmatrix}\)$. Denote by $G_r$ the unitary group of $W_r$, which is a reductive group over $F$. Let $K_r\subseteq G_r(F)$ be the stabilizer of the lattice $O_E^{2r}\subseteq W_r$, which is a hyperspecial maximal compact subgroup. For $\epsilon=\pm$, we will define in Section \ref{ss:almost} a certain Hecke algebra $\cH_{W_r}^\epsilon$ of $G_r$ which is commutative, together with an ideal $\cI_{W_r,V}$ of $\cH_{W_r}^{\epsilon(V)}\otimes\cH_V$. When $\epsilon=+$, $\cH_{W_r}^+$ is simply the spherical Hecke algebra $\dC[K_r\backslash G_r(F)/K_r]$ with respect to the Haar measure on $G_r(F)$ that gives $K_r$ volume $1$; when $\dim_EV=2r$ and $\epsilon(V)=+$, $\cH_{W_r}^+$ is canonically isomorphic to $\cH_V$ and $\cI_{W_r,V}$ is simply the diagonal ideal, namely, the kernel of the product homomorphism $\cH_{W_r}^+\otimes\cH_V\to\cH_V$.

Fix a nontrivial additive character $\psi_F\colon F\to\dC^\times$ of conductor $O_F$. Then we have the (complex) Weil representation $(\omega_{W_r,V},\cV_{W_r,V})$ of $G_r(F)\times H_V(F)$ (with respect to the additive character $\psi_F$ and the trivial splitting character). Let $I_r\subseteq K_r$ be a certain Iwahori subgroup. For $\epsilon=\pm$, we will define in Section \ref{ss:almost} a character $\kappa_r^\epsilon$ of the finite Hecke algebra $\dC[I_r\backslash K_r/I_r]$. Denote by $\cS_{W_r,V}$ the subspace of $\cV_{W_r,V}$ consisting of $I_r\times L_V$-invariant vectors on which $\dC[I_r\backslash K_r/I_r]$ acts by the character $\kappa_r^{\epsilon(V)}$. It turns out that $\cS_{W_r,V}$ is a module over $\cH_{W_r}^{\epsilon(V)}\otimes\cH_V$ via the representation $\omega_{W_r,V}$. The following theorem characterizes the structure of this module, which will be proved at the end of this note.

\begin{theorem}\label{th:support}
We have
\begin{enumerate}
  \item The annihilator of the $\cH_{W_r}^{\epsilon(V)}\otimes\cH_V$-module $\cS_{W_r,V}$ is $\cI_{W_r,V}$.

  \item When $q$ is odd, $\cS_{W_r,V}$ is free over $(\cH_{W_r}^{\epsilon(V)}\otimes\cH_V)/\cI_{W_r,V}$, and is generated by the characteristic function of $\Lambda_V^r$ under the Schr\"{o}dinger model $\cV_{W_r,V}\simeq C^\infty_c(V^r)$.
\end{enumerate}
\end{theorem}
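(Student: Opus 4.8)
The plan is to strip away the compact directions by reducing to a computation on the finite group $\UG_{2r}(\dF_q)$ and its Weil representation, and then to propagate the resulting identity along the Hecke operators. I would realize $\cV_{W_r,V}$ as a Schr\"odinger model $C^\infty_c(V^r)$, $V^r=V\otimes_E E^r$, in which $H_V(F)$ acts geometrically through its action on $V$ while $G_r(F)$ acts by the standard Weil operators attached to a polarization $W_r=E^r\oplus E^r$. Taking $L_V$-invariants gives the smooth $H_V(F)$-module $C^\infty_c(V^r)^{L_V}$, and through $\omega_{W_r,V}$ the group $K_r$ preserves the finite-dimensional subspace of those functions that are supported in $(\Lambda_V^\vee)^r$ and invariant under translation by $\Lambda_V^r$; on that subspace its reductive quotient $\sfG\cong\UG_{2r}(\dF_q)$ acts by the finite Weil representation of the residual hermitian space $\Lambda_V^\vee/\Lambda_V$, which is zero- or one-dimensional over $\dF_{q^2}$ according as $\epsilon(V)=+$ or $\epsilon(V)=-$. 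Since $\cS_{W_r,V}$ is by definition the part of $\cV_{W_r,V}^{I_r\times L_V}$ on which $\dC[I_r\backslash K_r/I_r]=\dC[\sfB\backslash\sfG/\sfB]$ acts by $\kappa_r^{\epsilon(V)}$ (for $\sfB\subseteq\sfG$ the image of $I_r$), it is governed by the $\kappa_r^{\epsilon(V)}$-isotypic parts of this $\sfG$-module and of its counterparts at the other lattice scales.

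The finite-group material developed in the earlier sections — the decomposition of the Weil representation of $\UG_{2r}(\dF_q)$ and its finite theta correspondence with the unitary group of the residual space — then identifies these isotypic parts: each is one-dimensional, spanned by the image of (a translate of) $\mathbbm{1}_{\Lambda_V^r}$. Hence $\phi_0\coloneqq\mathbbm{1}_{\Lambda_V^r}$ lies in $\cS_{W_r,V}$, so that the assertion of (2) is meaningful, and $\cS_{W_r,V}$ acquires an explicit $\dC$-basis $\{\phi_\mu\}$ indexed by a combinatorial set of lattice positions relative to $\Lambda_V^r$, with $\phi_0$ the vector at the trivial position. On this basis I would compute the action of $\cH_{W_r}^{\epsilon(V)}\otimes\cH_V$ using the explicit Satake-type presentations of $\cH_{W_r}^{\epsilon(V)}$ and of $\kappa_r^{\epsilon(V)}$ from Section \ref{ss:almost}: the generators of $\cH_V$ act by the usual lattice-averaging operators on $C^\infty_c(V^r)^{L_V}$, the generators of $\cH_{W_r}^{\epsilon(V)}$ act through $\omega_{W_r,V}$ by integration against the Weil representation of $G_r(F)$, and both are then expressed in closed form from the Weil-operator formulas.

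Comparing these formulas with the definition of $\cI_{W_r,V}$ — whose zero locus is, by construction, the graph of the (almost) unramified theta correspondence between $G_r$ and $H_V$ — shows on the one hand that $\cI_{W_r,V}$ annihilates $\cS_{W_r,V}$ and, on the other, because $\phi_0$ and its Hecke translates do not vanish, that every point of that graph lies in the support of $\cS_{W_r,V}$; since $\cI_{W_r,V}$ is a radical ideal (indeed prime, as its presentation in Section \ref{ss:almost} shows), these two inclusions force the annihilator of $\cS_{W_r,V}$ to be $\cI_{W_r,V}$, which is part (1). This step only records which joint Satake parameters occur, and so is insensitive to the parity of $q$. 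Granting (1), part (2) reduces to showing that $\phi_0$ generates $\cS_{W_r,V}$ over $\cH_{W_r}^{\epsilon(V)}\otimes\cH_V$: a cyclic module over a commutative ring is the ring modulo its annihilator, so cyclicity together with (1) yields that the map $(\cH_{W_r}^{\epsilon(V)}\otimes\cH_V)/\cI_{W_r,V}\to\cS_{W_r,V}$ given by $a\mapsto\omega_{W_r,V}(a)\phi_0$ is an isomorphism of modules, which is exactly the freeness and generation claimed in (2).

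Cyclicity I would establish by a triangularity argument: ordering $\{\phi_\mu\}$ by the position of the underlying lattice, suitable monomials in the Hecke generators send $\phi_0$ to $\phi_\mu$ modulo terms $\phi_\nu$ with $\nu<\mu$, the leading coefficient being a product of powers of $q$ and of quadratic Gauss sums over $\dF_q$ coming from the Weil operators; when $q$ is odd these Gauss sums are nonzero, so $\phi_0$ is cyclic, whereas when $q$ is even the squaring map on $\dF_q$ is bijective and the relevant Gauss sums vanish, which is exactly where the hypothesis on $q$ is used (part (1) is unaffected, as it never sees these coefficients). The main obstacle, I expect, lies in the computation underlying part (1): one has to determine how the \emph{non-spherical} part of $\cH_{W_r}^{\epsilon(V)}$ — which appears precisely when $\epsilon(V)=-$ — acts on the lattice basis and match it, uniformly in $r$ and in $\dim_E V$, with the Satake data cutting out $\cI_{W_r,V}$. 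The finite theta correspondence controls the depth-zero layer, but propagating that computation up the Iwahori filtration, while tracking the Gauss-sum factors responsible for the parity dichotomy, is where the real work lies; organizing the argument as an induction on $r$ through the Jacquet module of $\cS_{W_{r+1},V}$ along the Siegel-type parabolic of $G_{r+1}$ is a natural way to keep it manageable.
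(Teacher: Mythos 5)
Your proposal is a plan rather than a proof: every step that carries actual content is deferred to an ``explicit closed-form computation'' of the Hecke action on a basis $\{\phi_\mu\}$ of $\cS_{W_r,V}$ that is never produced, and you yourself flag that propagating the depth-zero analysis up the Iwahori filtration ``is where the real work lies.'' In particular, the inclusion that $\cI_{W_r,V}$ annihilates $\cS_{W_r,V}$ --- the harder half of (1) --- is exactly what such a computation would have to deliver, and nothing in your sketch substitutes for it. The paper obtains this inclusion without any explicit basis, by a different mechanism: Rallis-type intertwining integrals $Z_\sigma$ mapping $C^\infty_c(V^r)$ into $\rI^{\overleftarrow\sigma}_{W_r}\boxtimes\rI^{\overrightarrow\sigma}_V$ (Lemma \ref{le:principal}), on whose image the Hecke algebra acts through $\overleftarrow\Theta\otimes\overrightarrow\Theta$, together with the statement that the $Z_\sigma$ are \emph{jointly injective} on $\Sph(V^r)$ (Lemma \ref{le:vanishing}). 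That injectivity is itself a genuine theorem (Proposition \ref{pr:vanishing}): a function in $C^\infty_c(V^r)^{I_r^0}$ all of whose $G_r(F)$-translates vanish on the null cone $\Sigma_r(V)$ generates a supercuspidal subrepresentation, which cannot have $I_r^0$-fixed vectors. Your proposal contains no analogue of this step, and without it neither inclusion defining the annihilator is established.

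You have also misidentified where the hypothesis that $q$ is odd enters. The paper needs it only for Assumption \ref{as:invariant} --- that $\CF_{\Lambda_V^r}$ generates $C^\infty_c(V^r)^{I_r}[\kappa_r^\epsilon]$ as an $H_V(F)$-representation --- whose proof invokes Waldspurger's generalized lattice model, available only in odd residue characteristic; the paper explicitly expects part (2) to remain true for $q$ even. Your claim that the relevant Gauss sums vanish for even $q$ and that this is ``exactly where the hypothesis on $q$ is used'' is therefore not the actual mechanism and would, if correct, contradict that expectation. Finally, for (2) the paper does not argue by triangularity: generation comes from Assumption \ref{as:invariant}, and the nonvanishing of the section $\CF_{\Lambda_V^r}$ at every point of $\Spec(\cH_{W_r}^{\epsilon}\otimes\cH_V)/\cI_{W_r,V}$ is extracted from the explicit doubling zeta integral (Proposition \ref{pr:zeta}) combined with the computation of the doubling $L$-factor (Theorem \ref{th:gcd}) and the theta-lifting statement (Theorem \ref{th:theta}); none of this machinery, nor any workable substitute for it, appears in your outline.
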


Now we introduce the notion of almost unramified representations. Recall that an irreducible admissible representation $\pi$ of $G_r(F)$ satisfying $\pi^{I_r}\neq\{0\}$ is a constituent of an unramified principal series. Thus, we may associate an element in $(\dC^\times)^r$, unique up to the action of the Weyl group $\{\pm1\}^r\rtimes\fS_r$, to $\pi$, which we call the \emph{Satake parameter} of $\pi$. By our definition of $\kappa_r^+$, which is simply the character on the characteristic function of $K_r$, $\pi$ is unramified (with respect to $K_r$) if and only if $\pi^{I_r}[\kappa_r^+]\neq\{0\}$. Now we say that an irreducible admissible representation $\pi$ of $G_r(F)$ is \emph{almost unramified} (with respect to $K_r$) if $\pi^{I_r}[\kappa_r^-]\neq\{0\}$ and that the Satake parameter of $\pi$ contains either $q$ or $q^{-1}$ (Definition \ref{de:representation}).

For every element $\sigma\in(\dC/\tfrac{\pi i}{\log q}\dZ)^r$, let $\rI^\sigma_{W_r}$ be the corresponding unramified principal series of $G_r(F)$, and $\pi^\sigma_{W_r,\epsilon}$ its unique (irreducible) constituent satisfying $(\pi^\sigma_{W_r,\epsilon})^{I_r}[\kappa_r^\epsilon]\neq\{0\}$. Then every almost unramified representation of $G_r(F)$ is isomorphic to $\pi^\sigma_{W_r,-}$ for some $\sigma$ that contains either $\tfrac{1}{2}$ or $-\tfrac{1}{2}$. See Section \ref{ss:proof} for more details on principal series.

On the other hand, let $V_\epsilon$ be the unique up to isomorphism hermitian space over $E$ of dimension $2r$ and sign $\epsilon$. For every element $\sigma\in(\dC/\tfrac{\pi i}{\log q}\dZ)^{r-1}$, we have the corresponding principal series $\rI^\sigma_{V_-}$ of $H_{V_-}(F)$, with $\pi_{V_-}^\sigma$ its unique (irreducible) constituent satisfying $(\pi^\sigma_{V_-})^{L_{V_-}}\neq\{0\}$. We have the following theorem concerning the theta lifting of almost unramified representations.

\begin{theorem}[special case of Theorem \ref{th:theta}]\label{th:theta_pre}
Suppose that $q$ is odd. For every $\sigma\in(\dC/\tfrac{\pi i}{\log q}\dZ)^{r-1}$, we have
\[
\theta(\pi^{(\sigma,1/2)}_{W_r,-},V_-)\simeq\pi^\sigma_{V_-},
\]
where the left-hand side denotes the theta lifting of $\pi^{(\sigma,1/2)}_{W_r,-}$ to the hermitian space $V_-$ (with respect to the additive character $\psi_F$ and the trivial splitting character). In particular, we have $\theta(\pi^{(\sigma,1/2)}_{W_r,-},V_+)=0$ by the theta dichotomy \cite{GG11}.
\end{theorem}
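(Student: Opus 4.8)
The theorem follows by combining the module-structure statement of Theorem~\ref{th:support} with the Rallis-type inner product/doubling formalism that relates theta lifting to the Hecke action on $\cS_{W_r,V}$. The strategy is to exhibit $\pi^\sigma_{V_-}$ explicitly inside the theta lift and then show equality by computing Satake parameters and dimensions. First I would recall that $\cS_{W_r,V_-}$ is, by Theorem~\ref{th:support}(2), free of rank one over $R\coloneqq(\cH^-_{W_r}\otimes\cH_{V_-})/\cI_{W_r,V_-}$, generated by $\mathbbm{1}_{\Lambda_{V_-}^r}$ in the Schr\"{o}dinger model. The ideal $\cI_{W_r,V_-}$ encodes precisely the matching of Satake parameters under theta correspondence: passing to the spectrum, $\Spec R$ is the graph of the map on unramified (resp.\ almost unramified) parameters sending $\sigma\in(\dC/\tfrac{\pi i}{\log q}\dZ)^{r-1}$ to $(\sigma,1/2)$, which is exactly the classical doubling/Satake computation (the extra coordinate $1/2$ is the ``$q$ or $q^{-1}$'' in the Satake parameter of an almost unramified representation, and it is the one absorbed by the smaller group $H_{V_-}$). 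So I would identify the characters of $R$ with pairs $(\pi^{(\sigma,1/2)}_{W_r,-},\pi^\sigma_{V_-})$.

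Next, the key point is that $\cS_{W_r,V_-}$ computes the $\kappa^-_r$-isotypic, $L_{V_-}$-invariant part of the maximal $\pi^{(\sigma,1/2)}_{W_r,-}$-isotypic quotient of the Weil representation. Concretely: for a fixed character $\chi_\sigma\colon R\to\dC$, localizing the $R$-module $\cS_{W_r,V_-}$ at $\chi_\sigma$ picks out a one-dimensional space inside $\Hom_{G_r(F)}(\omega_{W_r,V_-},\pi^{(\sigma,1/2)}_{W_r,-})\otimes(\,\cdot\,)^{L_{V_-}}$ on which the Hecke algebra $\cH_{V_-}$ acts through the character corresponding to $\pi^\sigma_{V_-}$. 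Since $\pi^\sigma_{V_-}$ is (by definition) the unique irreducible constituent of $\rI^\sigma_{V_-}$ with a nonzero $L_{V_-}$-fixed vector, and the big theta lift $\Theta(\pi^{(\sigma,1/2)}_{W_r,-},V_-)$ is known to be irreducible or zero in this range (it is nonzero because $\mathbbm{1}_{\Lambda_{V_-}^r}\neq0$ maps to a nonzero vector by freeness), it must be that $\theta(\pi^{(\sigma,1/2)}_{W_r,-},V_-)$ is the unique irreducible quotient of $\rI^\sigma_{V_-}$ and contains $L_{V_-}$-fixed vectors; one then checks these two properties force it to be $\pi^\sigma_{V_-}$ itself (using that the spherical constituent is the Langlands quotient / unique generic-or-spherical constituent and the Satake parameter matches $\sigma$ by the above).

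Finally, the last sentence — vanishing of $\theta(\pi^{(\sigma,1/2)}_{W_r,-},V_+)$ — is immediate from \cite{GG11}: theta dichotomy for the pair $(G_r, H_{V_\pm})$ at equal dimension $\dim_E V_\pm = 2r$ asserts that the lift is nonzero to exactly one of $V_+$, $V_-$; since it is nonzero to $V_-$ by the above, it vanishes to $V_+$. Alternatively, one observes directly that $\cS_{W_r,V_+}$ is supported (via Theorem~\ref{th:support}(1)) on parameters of the form $(\sigma,\ast)$ with the extra coordinate being a unit Satake parameter rather than $q^{\pm1}$, so the character $\chi_{(\sigma,1/2)}$ relevant to $\pi^{(\sigma,1/2)}_{W_r,-}$ does not lie in $\Spec\big((\cH^+_{W_r}\otimes\cH_{V_+})/\cI_{W_r,V_+}\big)$, whence the corresponding isotypic space is zero.

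\textbf{Main obstacle.} The step I expect to be most delicate is the passage from ``the Hecke module $\cS_{W_r,V_-}$ localizes to a one-dimensional space in the relevant $\Hom$-space'' to ``the full theta lift equals $\pi^\sigma_{V_-}$'': a priori freeness over $R$ only controls the $(I_r,\kappa^-_r)\times L_{V_-}$-fixed vectors, and one must rule out that $\Theta(\pi^{(\sigma,1/2)}_{W_r,-},V_-)$ has additional constituents of $\rI^\sigma_{V_-}$ without $L_{V_-}$-fixed vectors. This requires either invoking the Howe duality/irreducibility of big theta lifts in the unramified-at-the-boundary setting, or an independent bound on the length of $\Theta$ coming from the doubling zeta integral; I would handle it by combining irreducibility of $\Theta$ (Howe duality for unitary groups, now a theorem) with the parameter computation, so that irreducibility plus ``has an $L_{V_-}$-fixed vector'' plus ``Satake parameter $\sigma$'' pins it down uniquely.
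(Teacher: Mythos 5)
There is a genuine circularity in your main line of argument. You take Theorem \ref{th:support}(2) (freeness of $\cS_{W_r,V_-}$ over $R$, generated by $\CF_{\Lambda_{V_-}^r}$) as the input and try to deduce the theta correspondence from it. But in the paper the logical order is the reverse: Theorem \ref{th:support}(2) is \emph{deduced from} Theorem \ref{th:theta}(2), because the only way the paper shows that the section $\CF_{\Lambda_{V_-}^r}$ is nowhere vanishing on $\Spec R$ is by proving $\theta_\pi(\CF_{\Lambda_{V_-}^r})\neq 0$ for every $\sigma$. The essential analytic input for that nonvanishing is entirely absent from your proposal: one must compute the doubling zeta integral $Z(\xi^\sigma_-,\ff^{(s)}_{r,-})$ explicitly (Proposition \ref{pr:zeta}, via Gindikin--Karpelevich and Casselman's intertwining operator formulas), determine the doubling $L$-factor $L(s,\pi^{(\sigma,1/2)}_{W_r,-})=L^{(\sigma,1/2)}_-(s)$ (Theorem \ref{th:gcd}), identify $f^{(s)}_{\CF_{\Lambda_{V_-}^{2r}}}$ with $\ff^{(s)}_{r,-}$ (Lemma \ref{le:theta}), and then check that $Z(\xi,\tilde f^{(s)})/L(s+\tfrac12,\pi)=c^r_-(s)/b_{2r}(s)$ is nonzero at $s=s_0=0$; the seesaw identity then gives $\Theta_\pi(\CF_{\Lambda_{V_-}^r})\neq 0$. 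Without some substitute for this computation you cannot even conclude $\Theta(\pi,V_-)\neq 0$, and your appeal to freeness assumes exactly what is to be proved. (What \emph{is} available independently is only Theorem \ref{th:support}(1), the annihilator computation of Proposition \ref{pr:annihilator}, which gives the matching of Hecke eigenvalues \emph{conditional} on nonvanishing of invariant vectors.)

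On the step you correctly flag as delicate: the paper does not resolve it by Howe duality alone. Howe duality gives that $\theta(\pi,V_-)$ is irreducible or zero, but does not show that the $L_{V_-}$-fixed vectors of $\Theta(\pi,V_-)$ survive into the maximal semisimple quotient. The paper uses Assumption \ref{as:invariant} --- that $C^\infty_c(V^r)^{I_r}[\kappa_r^-]$ generates the Weil representation under $H_{V_-}(F)$ from the single vector $\CF_{\Lambda_{V_-}^r}$ --- proved via Waldspurger's generalized lattice model (Proposition \ref{pr:invariant}); this forces $\theta(\pi,V_-)$ to be generated by $\theta_\pi(\CF_{\Lambda_{V_-}^r})$, hence to have a nonzero $L_{V_-}$-fixed vector, after which Proposition \ref{pr:annihilator} pins it down as $\pi^\sigma_{V_-}$. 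This is also the sole point where the hypothesis that $q$ is odd enters, which your proposal never locates. Finally, your ``alternative'' argument for $\theta(\pi^{(\sigma,1/2)}_{W_r,-},V_+)=0$ does not work: $\cS_{W_r,V_+}$ consists of $\kappa_r^+$-isotypic vectors, so it detects only \emph{unramified} representations of $G_r(F)$ and says nothing about the theta lift of the almost unramified representation $\pi^{(\sigma,1/2)}_{W_r,-}$; only the dichotomy citation to \cite{GG11} (which you also give, and which matches the paper) is valid.
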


In the course of proving Theorem \ref{th:support} and Theorem \ref{th:theta_pre}, we have to compute the doubling $L$-factor $L(s,\pi)$ \cites{HKS96,Yam14} for an almost unramified representation $\pi$ of $G_r(F)$, which we state in the following result.

\begin{theorem}[Theorem \ref{th:gcd}]\label{th:gcd_pre}
Let $\pi$ be an almost unramified representation of $G_r(F)$ that is isomorphic to $\pi^{(\sigma,1/2)}_{W_r,-}$ for some $\sigma=(\sigma_1,\dots,\sigma_{r-1})\in(\dC/\tfrac{\pi i}{\log q}\dZ)^{r-1}$. Then we have
\[
L(s,\pi)=\frac{1}{1-q^{-1-2s}}\cdot\prod_{i=1}^{r-1}\frac{1}{(1-q^{2\sigma_i-2s})(1-q^{-2\sigma_i-2s})},
\]
and $\varepsilon(s,\pi,\psi_F)=-q^{1-2s}$ for the doubling epsilon factor. In particular, $\pi$ is not unramified.
\end{theorem}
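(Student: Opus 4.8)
The plan is to extract both local factors from the doubling zeta integrals that define them, following \cites{HKS96,Yam14}: compute the doubling $\gamma$-factor by formal multiplicativity, determine $L(s,\pi)$ by reducing to a rank-one computation, and then read off $\varepsilon(s,\pi,\psi_F)$ from the local functional equation $\gamma(s,\pi,\psi_F)=\varepsilon(s,\pi,\psi_F)\cdot L(1-s,\pi^\vee)/L(s,\pi)$. For the $\gamma$-factor, I would use that $\pi\cong\pi^{(\sigma,1/2)}_{W_r,-}$ is a subquotient of the unramified principal series $\rI^{(\sigma,1/2)}_{W_r}$, which is parabolically induced from the character $\bigl(\bigotimes_{i=1}^{r-1}|\cdot|_E^{\sigma_i}\bigr)\otimes|\cdot|_E^{1/2}$ of the Levi $(\Res_{E/F}\GL_1)^r$ of a Borel of $G_r$. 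The doubling $\gamma$-factor is multiplicative under parabolic induction and depends only on the inducing data (the inductivity and multiplicativity properties established in \cites{HKS96,Yam14}), so $\gamma(s,\pi,\psi_F)$ equals the product of the Tate $\gamma$-factors over $E$ of the unramified characters $|\cdot|_E^{\pm\sigma_i}$ $(1\leq i\leq r-1)$ and $|\cdot|_E^{\pm1/2}$, all with respect to $\psi_E\coloneqq\psi_F\circ\Tr_{E/F}$. Since $E/F$ is unramified, $\psi_E$ has conductor $O_E$, so each factor is the explicit rational function $(1-q_E^{-t-s})/(1-q_E^{t+s-1})$ for the corresponding exponent $t$, which with $q_E=q^2$ gives a closed form for $\gamma(s,\pi,\psi_F)$.

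The substantive step is $L(s,\pi)$. Here I would realize $\pi$ as the constituent possessing a nonzero $\kappa_r^-$-line inside the representation parabolically induced from $\bigl(\bigotimes_{i=1}^{r-1}|\cdot|_E^{\sigma_i}\bigr)\boxtimes\tau$, where $\tau=\pi^{(1/2)}_{W_1,-}$ is the almost unramified representation of the rank-one group $G_1$---concretely, $\tau$ is (a twist of) the Steinberg representation of $G_1$, since $|\cdot|_E^{1/2}$ restricts to $|\cdot|_F$ on $F^\times$, a reducibility point of $\rI_{W_1}$. For $\sigma$ in general position this induction is irreducible, so the multiplicativity of the doubling $L$-factor \cite{Yam14} yields $L(s,\pi)=L(s,\tau)\cdot\prod_{i=1}^{r-1}L(s,|\cdot|_E^{\sigma_i})\,L(s,|\cdot|_E^{-\sigma_i})$; the $\GL_1(E)$-factors are elementary Tate factors, contributing $\prod_{i=1}^{r-1}(1-q^{2\sigma_i-2s})^{-1}(1-q^{-2\sigma_i-2s})^{-1}$, while the base case $L(s,\tau)=(1-q^{-1-2s})^{-1}$ comes from the doubling $L$-factor of the Steinberg representation of a rank-one unitary group---equivalently, from a direct evaluation of the doubling integral on $\UG(W_1\oplus W_1^-)$ against the Steinberg matrix coefficient and a suitable almost unramified section, which collapses to a single geometric series. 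The case of arbitrary $\sigma$ is then recovered by specialization, using that the $L$-factor of a constituent divides that of $\rI^{(\sigma,1/2)}_{W_r}$ together with a matrix-coefficient test vector---built, via Theorem \ref{th:support}, from the $\kappa_r^-$-vector of $\pi$---that realizes the claimed lower bound. In particular, the factor $(1-q^{1-2s})^{-1}$, which does occur in the $L$-factor of the full induced representation $\rI^{(\sigma,1/2)}_{W_r}$, is \emph{not} part of $L(s,\pi)$: its pole belongs to the complementary constituent rather than to $\pi$.

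Finally, the Weyl-group symmetry $\sigma_i\leftrightarrow-\sigma_i$ of the parameter makes $\pi$ self-dual, so $L(1-s,\pi^\vee)$ is $L(s,\pi)$ with $s$ replaced by $1-s$. Substituting the outputs of the two previous steps into $\varepsilon(s,\pi,\psi_F)=\gamma(s,\pi,\psi_F)\cdot L(s,\pi)/L(1-s,\pi^\vee)$, every factor indexed by $i$ cancels, three of the four factors attached to the last coordinate cancel as well, and what is left is $(1-q^{1-2s})/(1-q^{2s-1})=-q^{1-2s}$---the minus sign being exactly the identity $(1-x)/(1-x^{-1})=-x$ with $x=q^{1-2s}$, which is the source of the root number $-1$. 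Since $-q^{1-2s}$ is not identically $1$, whereas an unramified representation has trivial doubling epsilon factor with respect to $\psi_F$ of conductor $O_F$, the representation $\pi$ is not unramified. I expect the main obstacle to be the $L$-factor step: producing the correct almost unramified section on $\UG(W_r\oplus W_r^-)$ and proving optimality---that the fractional ideal spanned by the doubling zeta integrals is exactly $(L(s,\pi)^{-1})$, and in particular is regular at $q^{1-2s}=1$. This is where the fine structure of $\kappa_r^-$ (and Theorem \ref{th:support}) genuinely enters, everything else being either formal or a rank-one computation.
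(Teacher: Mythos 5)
Your outline is sound and, for the $L$-factor, follows essentially the paper's route: the paper likewise realizes the $\kappa_r^-$-line of $\rI^{(\sigma,1/2)}_{W_r}$ inside the representation induced from $\pi^{(1/2)}_{W_1,-}\boxtimes|\;|_E^{\sigma_1}\boxtimes\cdots\boxtimes|\;|_E^{\sigma_{r-1}}$, uses the divisibility of doubling $L$-factors under parabolic induction (\cite{Yam14}*{Lemma~6.1(1)}) for the upper bound $L^\sigma_-(s)$, and gets the matching lower bound from the explicit value $Z(\xi^\sigma_-,\ff_{r,-}^{(s)})=c^r_-(s)\,L^\sigma_-(s+\tfrac{1}{2})/b_{2r}(s)$ of Proposition \ref{pr:zeta} together with \cite{Yam14}*{Lemma~6.1(2)}; the rank-one base case is settled by \cite{Yam14}*{Lemma~7.2} (holomorphy for $\RE s>0$, which is what actually excludes the factor $(1-q^{1-2s})^{-1}$) plus the non-entireness of the explicit zeta integral. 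One caution: genuine multiplicativity of the doubling \emph{$L$-factor} is not available even for irreducible inductions in general position --- only the divisibility direction is formal, and the reverse inclusion is precisely the test-vector optimality you flag at the end --- so your ``recovered by specialization'' step should be dropped in favor of the divisibility-plus-test-vector argument you also describe (which is the paper's). Where you genuinely diverge is the epsilon factor: you compute $\gamma(s,\pi,\psi_F)$ by multiplicativity, reducing to Tate $\gamma$-factors over $E$ of the inducing characters, and then solve $\varepsilon=\gamma\cdot L(s,\pi)/L(1-s,\pi^\vee)$; the paper instead evaluates the normalized intertwining operator $M^\dag_{\psi_F}(s)$ on the explicit section $\ff^{(s)}_{r,-}$ via the formulas of \cite{Cas80} (Lemma \ref{le:intertwining}) and reads off $\varepsilon$ from the functional equation of \cite{Yam14}*{Theorem~5.2} applied to the pair $(\xi^\sigma_-,\ff^{(s)}_{r,-})$. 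Your route is shorter once one grants the multiplicativity of the doubling $\gamma$-factor (one of the standard Lapid--Rallis properties carried over in \cite{Yam14}), and your final cancellation $(1-q^{1-2s})/(1-q^{2s-1})=-q^{1-2s}$ is correct; the paper's route buys an explicit formula for $M^\dag_{\psi_F}(s)\ff^{(s)}_{r,-}$ that is of independent use, and stays entirely within the circle of explicit sections already set up for Proposition \ref{pr:zeta}.
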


We also explicitly compute the doubling zeta integral for a test vector in $\pi^{I_r}[\kappa_r^-]$ in Proposition \ref{pr:zeta}, as a necessary step toward the proof of Theorem \ref{th:gcd}. The formula itself will have applications in obtaining some explicit (arithmetic) Rallis inner product formulae.

\begin{remark}
We have the following remarks.
\begin{enumerate}
  \item Suppose that $\pi$ is an unramified irreducible admissible representation of $G_r(F)$. Then the analogous result in Theorem \ref{th:theta_pre} has been established in \cite{Liu11}*{Appendix}, closely following the argument of Rallis \cite{Ral82}.

  \item By Theorem \ref{th:theta_pre}, when $q$ is odd, \emph{almost} unramified representations are exactly those representations whose theta lifting to the non-quasi-split unitary group of the same (even) rank has nonzero invariants under the stabilizer of an \emph{almost} self-dual lattice, which justifies the terminology. In fact, these representations already appeared in the work \cite{LTXZZ}.

  \item Suppose that $\pi$ is an almost unramified irreducible admissible representation of $G_r(F)$. By Theorem \ref{th:gcd_pre} and \cite{LTXZZ}*{Lemma~C.2.3(2)}, we have the comparison
       \[
       L(s,\pi)=L(s,\BC(\pi)),\quad\varepsilon(s,\pi,\psi_F)=\varepsilon(s,\BC(\pi),\psi_F)
       \]
       with the standard base change $L$-factor and epsilon factor, which confirms the expectation in \cite{Yam14}*{Remark~5.1}. Moreover, the correspondence in Theorem \ref{th:theta_pre} verifies the epsilon dichotomy expected in \cite{HKS96} (see \cite{HKS96}*{Theorem~6.1} for the supercuspidal case) as $\varepsilon(\tfrac{1}{2},\pi,\psi_F)=-1$.

  \item By \cite{LTXZZ}*{Lemma~C.2.3(2) \& Lemma~C.2.5}, we know that the correspondence in Theorem \ref{th:theta_pre} or more generally in Theorem \ref{th:theta} is functorial, as new (affirmative) examples toward the Adams' conjecture \cite{HKS96}*{Conjecture~7.2}.

  \item Both the epsilon dichotomy mentioned in (1) and the functoriality of the theta correspondence in Theorem \ref{th:theta} when $\dim_EV=2r$ have already been established in \cite{GS12}.\footnote{In \cite{GS12}, the authors studied the symplectic-orthogonal case; but their argument works for the unitary case as well.}

  \item The sole reason for us to assume $q$ odd in Theorem \ref{th:support}(2) and Theorem \ref{th:theta_pre} is that we can only prove Assumption \ref{as:invariant} when $q$ is odd, since the proof uses Waldspurger's generalized lattice model. We expect that those statements remain true when $q$ is even.
\end{enumerate}
\end{remark}

Our motivation for studying theta correspondence of almost unramified representations comes from global aspect, namely the arithmetic theta lifting or the arithmetic Rallis inner product formula \cite{Liu11}; see our very recent work \cite{LL}. In the global setting, $E/F$ will be a CM extension of number fields; and we consider a cuspidal automorphic representation $\pi$ of $G_r(\dA_F)$ such that its archimedean component is the holomorphic discrete series of the ``minimal'' weight. Under the current technique, it is necessary to assume that $E/F$ is everywhere unramified, which forces $[F:\dQ]$ even. In particular, the product of archimedean root numbers $\varepsilon(\pi_\infty)$ is always $1$. Thus, if we want to put ourselves in the arithmetic situation, that is, the global root number $\varepsilon(\pi)$ is $-1$, then it is necessary to have at least one finite place $v$ of $F$ inert in $E$ such that $\varepsilon(\pi_v)=-1$. The almost unramified representations studied in this note are in some sense least ramified among all representations with root number $-1$.

In the course of writing, although results in the case where $\epsilon=+$ are essentially known from previous literatures, we will treat the two cases in a parallel way so that readers can see the analogy very clearly.

\subsubsection*{Notations and conventions}

\begin{itemize}
  \item When we have a function $f$ on a product set $A_1\times\cdots\times A_n$, we will write $f(a_1,\dots,a_n)$ instead of $f((a_1,\dots,a_n))$ for its value at an element $(a_1,\dots,a_n)\in A_1\times\cdots\times A_n$.

  \item For a set $S$, we denote by $\CF_S$ the characteristic function of $S$.

  \item Tensor product of complex vector spaces over the field of complex numbers will simply be denoted as $\otimes$.

  \item For an integer $r\geq 0$, we denote by $\fS_r$ the group of permutations of $\{1,\dots,r\}$ and put $\fW_r\coloneqq\{\pm1\}^r\rtimes\fS_r$. For $1\leq i\leq r$, we denote by $w_i$ the element $-1$ in the $i$-th factor of $\{\pm 1\}^r$; for every element $\sigma\in\fS_r$, we put $w'_\sigma\coloneqq(1^r,\sigma)\in\fW_r$. Note that $\fW_r$ is generated by $\{w_1,w'_{(1,2)},\dots,w'_{(r-1,r)}\}$.

  \item For an integer $r\geq 0$, we put $\cT_r\coloneqq\dC[T_1^{\pm 1},\dots,T_r^{\pm 1}]^{\fW_r}$, where $\fW_r$ acts in the way that $w_i$ turns $T_i^\pm$ to $T_i^\mp$ and $w'_\sigma$ permutes the variables.

  \item Recall that we have fixed an unramified quadratic extension $E/F$ of nonarchimedean local fields not of characteristic 2, with $\tc\in\Gal(E/F)$ the Galois involution. Let $\fp_F$ and $\fp_E$ be the maximal ideal of $O_F$ and $O_E$, respectively; and put $q\coloneqq|O_F/\fp_F|$.

  \item For an integer $r\geq 0$, we denote by $0_r$ and $1_r$ the null and identity matrices of rank $r$, respectively, and $\Herm_r$ the subscheme of $\Res_{E/F}\Mat_{r,r}$ of $r$-by-$r$ matrices $b$ satisfying $\tp{b}^\tc=b$.

  \item We fix a (nontrivial) additive character $\psi_F\colon F\to\dC^\times$ of conductor $\fp_F^\fc$ for some $\fc\in\dZ$. Put $\psi_E\coloneqq\psi_F\circ\Tr_{E/F}\colon E\to\dC^\times$.

  \item Denote by $|\;|_E\colon E^\times\to\dC^\times$ the normalized norm character which sends a uniformizer to $q^{-2}$.
\end{itemize}

\subsubsection*{Acknowledgements}

The author would like to thank the anonymous referee for careful reading and helpful comments. The research of the author is partially supported by the NSF grant DMS--1702019.

\section{Unramified and almost unramified Hecke algebras}
\label{ss:almost}

In this section, we will define several Hecke algebras and some objects associated to them.

Let $r\geq 0$ be an integer. We equip $W_r\coloneqq E^{2r}$ with the skew-hermitian form given by the matrix
\[
\begin{pmatrix}
 & 1_r \\
-1_r &  \\
\end{pmatrix}
.
\]
We denote by $\{e_1,\dots,e_{2r}\}$ the natural basis of $W_r$. Denote by $G_r$ the unitary group of $W_r$, which is a reductive group over $F$. We write elements of $W_r$ in the row form, on which $G_r$ acts from the right.

Let $K_r\subseteq G_r(F)$ be the stabilizer of the lattice $O_E^{2r}\subseteq W_r$, which is a hyperspecial maximal compact subgroup. Let $P_r$ be the Borel subgroup of $G_r$ consisting of elements of the form
\[
\begin{pmatrix}
a & b \\
& \tp{a}^{\tc,-1} \\
\end{pmatrix}
,
\]
in which $a$ is a \emph{lower-triangular} matrix in $\Res_{E/F}\GL_r$.\footnote{Our choice of the Borel subgroup is consistent with \cite{Ral82}*{Section~4} and \cite{Liu11}*{Appendix}, but not consistent with \cite{GPR}*{Part~A} and \cite{Li92}.} Let $M_r$ be the standard diagonal Levi factor of $P_r$, and $I_r\subseteq K_r$ the Iwahori subgroup corresponding to $P_r$. For every integer $0\leq t\leq r$, let $P_r^t$ be the maximal parabolic subgroup of $G_r$ containing $P_r$ with the unipotent radical $N_r^t$, such that the standard diagonal Levi factor $M_r^t$ of $P_r^t$ is isomorphic to $G_t\times\Res_{E/F}\GL_{r-t}$; and also let $I_r^t\subseteq K_r$ be the corresponding parahoric subgroup. In particular, $P_r^r=G_r$ and $P_r^0$ is the (hermitian Siegel) parabolic subgroup stabilizing $Y_r$. In the discussion later, we will use the isomorphism
\begin{align}\label{eq:levi}
m\colon G_t\times\Res_{E/F}\GL_{r-t}\xrightarrow{\sim}M_r^t
\end{align}
given by the assignment
\[
\(
\begin{pmatrix}
    g_{11} & g_{12} \\
    g_{21} & g_{22} \\
\end{pmatrix}
,a
\)
\mapsto
\begin{pmatrix}
   g_{11} && g_{12} & \\
   &  a &&  \\
    g_{21} && g_{22} & \\
   &&  & \tp{a}^{\tc,-1} \\
\end{pmatrix}
.
\]
We also have an isomorphism
\[
n\colon\Herm_r\xrightarrow{\sim} N_r^0
\]
given by the assignment
\[
b \mapsto
\begin{pmatrix}
    1_r & b \\
     & 1_r \\
\end{pmatrix}
.
\]

We now explicitly realize the Weyl group of $G_r$, which is isomorphic to $\fW_r=\{\pm1\}^r\rtimes\fS_r$, as a subgroup of $K_r$ in the way that
\[
w_1=
\begin{pmatrix}
    && 1 & \\
   &  1_{r-1} &&  \\
   -1 &&  & \\
   &&  & 1_{r-1} \\
\end{pmatrix}
,
\]
and for $\sigma\in\fS_r$, $w'_\sigma$ corresponds to the element in $K_r$ that permutes $\{e_1,\dots,e_r\}$ by $\sigma$. We have the Bruhat decomposition
\[
K_r=\coprod_{w\in\fW_r}I_rwI_r,
\]
and the finite Hecke algebra $\dC[I_r\backslash K_r/I_r]$ with the identity element $\CF_{I_r}$.

\begin{definition}\label{de:kappa}
For $\epsilon=\pm$, we define a character $\kappa_r^\epsilon$ of $\dC[I_r\backslash K_r/I_r]$ uniquely determined by the condition that $\kappa_r^\epsilon$ sends $\CF_{I_rw_1I_r}$ to $q(-q)^{((\epsilon1)-1)/2}$ and $\CF_{I_rw'_{(i,i+1)}I_r}$ to $q^2$ for $1\leq i\leq r-1$, which is well-defined by \cite{Mat64}.
\end{definition}

We regard $\dC[I_r\backslash K_r/I_r]$ as a module over itself via the right convolution.

\begin{lem}\label{le:eigenvector}
The eigenspace $\dC[I_r\backslash K_r/I_r][\kappa_r^\epsilon]$ is spanned over $\dC$ by the function
\[
\fe_r^\epsilon\coloneqq\sum_{i=0}^r (-q)^{\frac{((\epsilon1)-1)i}{2}}\CF_{I_r^0w_1\cdots w_i I_r^0}.
\]
\end{lem}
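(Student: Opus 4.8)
The plan is to work inside the finite Hecke algebra $\cH_{K_r}\coloneqq\dC[I_r\backslash K_r/I_r]$ attached to the Coxeter system $(\fW_r,S)$ with $S=\{w_1,w'_{(1,2)},\dots,w'_{(r-1,r)}\}$ of type $C_r$, together with its standard parameters; the character $\kappa_r^\epsilon$ of Definition \ref{de:kappa} is one of the linear characters of $\cH_{K_r}$, and we want to identify the $\kappa_r^\epsilon$-eigenspace for the right regular action. The key structural fact is that for an Iwahori--Hecke algebra with a linear character $\kappa$, the right $\kappa$-eigenspace is one-dimensional, spanned by the ``$\kappa$-symmetrizer'' $\sum_{w\in\fW_r}\kappa(\CF_{I_rwI_r})^{-1}q_w\,\CF_{I_rwI_r}$ suitably normalized (here $q_w$ is the index $[I_rwI_r:I_r]$); this is a standard consequence of the fact that $\cH_{K_r}\otimes_{\kappa}\dC$ is one-dimensional. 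So the first step is to write down that symmetrizer and then to rewrite it in the parahoric form appearing in the statement.

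**Reduction to a parahoric computation.** The element $\fe_r^\epsilon$ is written not in terms of the $I_r$-double cosets $\CF_{I_rwI_r}$ but in terms of the $I_r^0$-double cosets $\CF_{I_r^0 w_1\cdots w_i I_r^0}$, where $I_r^0$ is the parahoric attached to the maximal parabolic $P_r^0$ whose Levi is $\Res_{E/F}\GL_r$; equivalently $I_r^0$ is generated by $I_r$ and the reflections $w'_{(1,2)},\dots,w'_{(r-1,r)}$, i.e. the parabolic subgroup $\fW_r^0\cong\fS_r$. The second step is therefore: the idempotent $e^0\coloneqq\frac{1}{|\fW_r^0|_{q^2}}\sum_{w\in\fW_r^0}q^{2\ell(w)}\CF_{I_rwI_r}$ (the normalized $\kappa$-symmetrizer for the $\fS_r$-part, where $\kappa$ sends each $\CF_{I_rw'_{(i,i+1)}I_r}$ to $q^2$, which is consistent with the value of $\kappa_r^\epsilon$ there) satisfies $\CF_{I_r^0}=e^0$ as an element of $\cH_{K_r}$ up to the normalizing scalar, and hence $\CF_{I_r^0w_1\cdots w_iI_r^0}=e^0\cdot(\text{element supported on }I_r w_1\cdots w_i I_r)\cdot e^0$. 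Using the multiplication rules in $\cH_{K_r}$ (Iwahori relations: $\CF_{I_rsI_r}\CF_{I_rwI_r}=\CF_{I_rswI_r}$ when $\ell(sw)>\ell(w)$, and $=q_s\CF_{I_rswI_r}+(q_s-1)\CF_{I_rwI_r}$ otherwise, with $q_s=q$ for $s=w_1$ and $q_s=q^2$ for the others, by Definition \ref{de:kappa} and the structure theory), one checks that $w_1,\dots,w_i$ with $i\le r$ all have the property that $w_1\cdots w_i$ is the minimal-length representative in its double coset $\fW_r^0 w_1\cdots w_i\fW_r^0$, so $\CF_{I_r^0w_1\cdots w_iI_r^0}$ is, up to the fixed scalar, $\sum_{w\in\fW_r^0 w_1\cdots w_i\fW_r^0}q^{2(\ell(w)-\ell(w_1\cdots w_i))}\CF_{I_rwI_r}$ times an appropriate power coming from the $w_1$'s.

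**Matching the two expressions.** The third step is to expand the $\kappa_r^\epsilon$-symmetrizer of the whole $\fW_r$ and partition $\fW_r$ into the double cosets $\fW_r^0\backslash\fW_r/\fW_r^0$. The double coset space $\fW_r^0\backslash\fW_r/\fW_r^0$ for the type-$C_r$ Weyl group modulo its $\fS_r$-parabolic is indexed by $0\le i\le r$ (the number of sign changes), with minimal representatives $w_1 w_2\cdots w_i$ in the convention of the excerpt — this is a classical fact about signed permutations. Summing the symmetrizer over the $i$-th double coset and using that $\kappa_r^\epsilon(\CF_{I_rw_1I_r})=q(-q)^{((\epsilon1)-1)/2}$ while $\kappa_r^\epsilon$ is $q^2$ on the $\fS_r$-generators, the $\fS_r$-parts collapse into the prefactor that turns $\CF_{I_rwI_r}$-sums into $\CF_{I_r^0\,\cdot\,I_r^0}$, and the net power of $(-q)$ that survives from the $i$ sign changes is exactly $(-q)^{((\epsilon1)-1)i/2}$. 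Comparing, one gets $\dC[I_r\backslash K_r/I_r][\kappa_r^\epsilon]=\dC\cdot\fe_r^\epsilon$ as claimed; that the eigenspace is at most one-dimensional is the abstract input quoted above, and that $\fe_r^\epsilon$ genuinely lies in it is verified by a direct check that $\CF_{I_rsI_r}*\fe_r^\epsilon=\kappa_r^\epsilon(\CF_{I_rsI_r})\fe_r^\epsilon$ for each generator $s\in S$, using the Iwahori multiplication rules and the telescoping $\CF_{I_rw_1I_r}*\CF_{I_r^0w_1\cdots w_iI_r^0}$-computation. The main obstacle I anticipate is purely bookkeeping: correctly tracking the length function on signed permutations when one multiplies a minimal double-coset representative $w_1\cdots w_i$ by the generator $w_1$ on the left (the length can go up or down depending on $i$, and one must see that the coefficients in $\fe_r^\epsilon$ are precisely calibrated so that the ``down'' contribution $(q_s-1)$-term and the new $(-q)$-power conspire to reproduce $\kappa_r^\epsilon(\CF_{I_rw_1I_r})\fe_r^\epsilon$); everything else is formal once the one-dimensionality of $\kappa$-eigenspaces in an Iwahori--Hecke algebra is invoked.
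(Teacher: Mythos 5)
Your overall skeleton is the same as the paper's: observe that the $q^2$-eigenvalue condition on the $\fS_r$-generators forces bi-$I_r^0$-invariance, decompose $K_r$ into the $r+1$ parahoric double cosets $I_r^0w_1\cdots w_iI_r^0$, and then pin down the coefficients; one-dimensionality comes out of the coefficient analysis (in the paper) or from semisimplicity of the finite Hecke algebra (in your version, which is also fine). However, the explicit formulas that your ``matching'' step relies on are incorrect, and one of them is genuinely false rather than a normalization slip. First, the symmetrizer $\sum_{w}\kappa(\CF_{I_rwI_r})^{-1}q_w\,\CF_{I_rwI_r}$ is not a right $\kappa$-eigenvector: already for $r=1$ and $\kappa_1^-$ (where $\kappa_1^-(\CF_{I_1w_1I_1})=-1$) it gives $\CF_{I_1}-q\,\CF_{I_1w_1I_1}$, whereas the eigenvector is $\CF_{I_1}-q^{-1}\CF_{I_1w_1I_1}=\fe_1^-$. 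Similarly, $\CF_{I_r^0}=\sum_{w\in\fW_r^0}\CF_{I_rwI_r}$ with all coefficients equal to $1$, not $q^{2\ell(w)}$. Second, $w_1\cdots w_i$ is in general \emph{not} the minimal-length representative of $\fW_r^0w_1\cdots w_i\fW_r^0$: for $r=i=2$ the element $w_1w_2$ is the longest element of $\fW_2$ (length $4$), while $w_1w'_{(1,2)}w_1$ has length $3$ and lies in the same double coset. Moreover, since $I_r^0w_1\cdots w_iI_r^0$ is a union of Iwahori double cosets, its characteristic function is $\sum_w\CF_{I_rwI_r}$ over that union with unit coefficients, not the weighted sum $\sum_w q^{2(\ell(w)-\ell(w_1\cdots w_i))}\CF_{I_rwI_r}$ you propose. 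With these formulas as written, the claimed cancellation producing the factor $(-q)^{((\epsilon 1)-1)i/2}$ cannot be checked and, as stated, fails.

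The paper avoids all of this combinatorics: once a putative eigenvector is written as $\sum_{i=0}^rc_i\CF_{I_r^0w_1\cdots w_iI_r^0}$, it extracts the single coefficient of $\CF_{I_rw_1\cdots w_{j+1}I_r}$ in $\CF_{I_rw_1I_r}*(\cdot)$, which only receives contributions $c_j\cdot 1$ (from $\CF_{I_rw_2\cdots w_{j+1}I_r}$, since $\ell(w_1\cdot w_2\cdots w_{j+1})=\ell(w_2\cdots w_{j+1})+1$) and $(q-1)c_{j+1}$ (from $\CF_{I_rw_1\cdots w_{j+1}I_r}$); equating $c_j+(q-1)c_{j+1}$ with $\kappa_r^\epsilon(\CF_{I_rw_1I_r})c_{j+1}$ gives $c_j=c_{j+1}$ for $\epsilon=+$ and $c_j=-qc_{j+1}$ for $\epsilon=-$, which is the whole computation. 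If you want to keep your route, the clean way to finish is the fallback you already mention: take one-dimensionality from semisimplicity, and verify directly that $\CF_{I_rsI_r}*\fe_r^\epsilon=\kappa_r^\epsilon(\CF_{I_rsI_r})\fe_r^\epsilon$ for each generator $s$ using the Iwahori relations and the unit-coefficient expansion of $\CF_{I_r^0w_1\cdots w_iI_r^0}$; but drop the symmetrizer identity and the minimal-representative claim, which are not needed and are not correct.
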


\begin{proof}
In both cases, the condition that $\kappa_r^\epsilon$ sends $\CF_{I_rw'_{(i,i+1)}I_r}$ to $q^2$ for $1\leq i\leq r-1$ implies that $\dC[I_r\backslash K_r/I_r][\kappa_r^\epsilon]\subseteq\dC[I_r^0\backslash K_r/I_r^0]$.

By the Bruhat decomposition $K_r=\coprod_{i=0}^rI_r^0w_1\cdots w_i I_r^0$, every element in $\dC[I_r\backslash K_r/I_r][\kappa_r^\epsilon]$ has the form $\sum_{i=0}^r c_i\CF_{I_r^0w_1\cdots w_i I_r^0}$. Now for $0\leq j<r$, the coefficient of $\CF_{I_rw_1\cdots w_{j+1} I_r}$ in $\CF_{I_rw_1I_r}\cdot\sum_{i=0}^r c_i\CF_{I_r^0w_1\cdots w_i I_r^0}$ is same as in $\CF_{I_rw_1 I_r}\cdot\(c_j\CF_{I_r w_2\cdots w_{j+1}I_r}+c_{j+1}\CF_{I_rw_1\cdots w_{j+1} I_r}\)$, which is $c_j+(q-1)c_{j+1}$. Thus, when $\epsilon=+$, we must have $c_j+(q-1)c_{j+1}=qc_{j+1}$, namely, $c_j=c_{j+1}$ for $0\leq j <r$; when $\epsilon=-$, we must have $c_j+(q-1)c_{j+1}=-c_{j+1}$, namely, $c_j=(-q)c_{j+1}$ for $0\leq j <r$. The lemma follows.
\end{proof}

\begin{remark}\label{re:eigenvector}
We may write the function $\fe_r^\epsilon$ in Lemma \ref{le:eigenvector} in a slightly more intrinsic way. We have the Bruhat decomposition $K_r=\coprod_{i=0}^r\cB_i$ into bi-$I_r^0$-invariant subsets $\cB_i$ of $K_r$ such that $\cB_i\prec\cB_j$ in the Bruhat order for $0\leq i<j\leq r$. Then $\fe_r^\epsilon=\sum_{i=0}^r(-q)^{\frac{((\epsilon1)-1)i}{2}}\CF_{\cB_i}$.
\end{remark}

\begin{remark}
The decomposition of the $\dC[I_r\backslash K_r/I_r]$-module $\dC[I_r\backslash K_r/I_r]$ is controlled by representations of $\fW_r$ (see, for example, \cite{CIK71}*{Corollary~2.2} with $J=\emptyset$). In fact, from the proof of \cite{CIK71}*{Theorem~2.1}, one sees that the eigenspace $\dC[I_r\backslash K_r/I_r][\kappa_r^+]$ corresponds to the trivial character of $\fW_r$; and the eigenspace $\dC[I_r\backslash K_r/I_r][\kappa_r^-]$ corresponds to the character that is the (unique) extension of the product character $\{\pm 1\}^r\to\{\pm 1\}$, which is invariant under the $\fS_r$-action, to $\fW_r$ that is trivial on $\{+1\}^r\rtimes\fS_r$.
\end{remark}

Let $e_r^\epsilon$ be the unique nonzero element in the eigenspace $\dC[I_r\backslash K_r/I_r][\kappa_r^\epsilon]$, which is one dimensional by Lemma \ref{le:eigenvector}, satisfying $(e_r^\epsilon)^2=e_r^\epsilon$. We denote by $\cH^{I_r}_{W_r}\coloneqq\dC[I_r\backslash G_r(F)/I_r]$ the Iwahori-Hecke algebra, with respect to the Haar measure on $G_r(F)$ that gives $I_r$ volume $1$, which contains its center $Z(\cH^{I_r}_{W_r})$ and the finite subalgebra $\dC[I_r\backslash K_r/I_r]$.

\begin{definition}
For $\epsilon=\pm$, we define $\cH^\epsilon_{W_r}$ to be the algebra $e_r^\epsilon Z(\cH^{I_r}_{W_r})$ contained in $\cH^{I_r}_{W_r}$, with the algebra structure inherited from $\cH^{I_r}_{W_r}$. We call $\cH^+_{W_r}$ and $\cH^-_{W_r}$ the \emph{unramified} and \emph{almost unramified Hecke algebras} of $G_r$, respectively.
\end{definition}

\begin{lem}
For $\epsilon=\pm$, the assignment $f\mapsto e_r^\epsilon f$ induces an isomorphism
\[
\beta_r^\epsilon\colon Z(\cH^{I_r}_{W_r})\xrightarrow{\sim}\cH^\epsilon_{W_r}
\]
of algebras.
\end{lem}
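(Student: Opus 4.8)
The plan is as follows. Surjectivity of $\beta_r^\epsilon$ is immediate, since $\cH^\epsilon_{W_r}$ is \emph{defined} to be the image $e_r^\epsilon Z(\cH^{I_r}_{W_r})$. That $\beta_r^\epsilon$ is a homomorphism of algebras is a formal consequence of $e_r^\epsilon$ being idempotent together with $Z(\cH^{I_r}_{W_r})$ being central: for $f,g\in Z(\cH^{I_r}_{W_r})$ one has $(e_r^\epsilon f)(e_r^\epsilon g)=e_r^\epsilon(f e_r^\epsilon)g=e_r^\epsilon(e_r^\epsilon f)g=(e_r^\epsilon)^2 fg=e_r^\epsilon fg$, and $\beta_r^\epsilon(\CF_{I_r})=e_r^\epsilon$ is the unit of $\cH^\epsilon_{W_r}$. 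Thus everything reduces to injectivity.

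For injectivity, observe that if $f\in Z(\cH^{I_r}_{W_r})$ satisfies $e_r^\epsilon f=0$, then by centrality also $fe_r^\epsilon=0$, so $f$ annihilates the nonzero element $e_r^\epsilon$ of the ring $\cH^{I_r}_{W_r}$. Hence it suffices to know that $Z(\cH^{I_r}_{W_r})$ is an integral domain and that $\cH^{I_r}_{W_r}$ is torsion-free as a module over it, for then no nonzero central element can kill a nonzero element of $\cH^{I_r}_{W_r}$. Both facts I would extract from the Bernstein structure theory of the Iwahori--Hecke algebra: the center $Z(\cH^{I_r}_{W_r})$ is identified with $\cT_r=\dC[T_1^{\pm1},\dots,T_r^{\pm1}]^{\fW_r}$, a subring of a Laurent polynomial ring and hence a domain; and through the Bernstein presentation $\cH^{I_r}_{W_r}=\bigoplus_{w\in\fW_r}\cA\cdot T_w$, with $\cA$ the commutative ``abelian part'' and $Z(\cH^{I_r}_{W_r})=\cA^{\fW_r}$, the algebra $\cH^{I_r}_{W_r}$ is free — in particular torsion-free — over $Z(\cH^{I_r}_{W_r})$, using that $\cA$ is free of rank $|\fW_r|$ over $\cA^{\fW_r}$ for the signed-permutation action of $\fW_r$ (a Pittie--Steinberg-type freeness, which one checks directly in type $B/C$). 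This single structural input is the only non-formal ingredient, and the point where one must be somewhat careful; everything else is bookkeeping.

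Alternatively, and perhaps closer to the principal-series computations used later in the paper, one can argue representation-theoretically: for every unramified character $\sigma$ of the diagonal torus, the Iwahori-fixed vectors $(\rI^\sigma_{W_r})^{I_r}$ form an $\cH^{I_r}_{W_r}$-module on which $Z(\cH^{I_r}_{W_r})$ acts through the scalar given by evaluating the corresponding element of $\cT_r$ at $\sigma$, and whose restriction to $\dC[I_r\backslash K_r/I_r]$ is the regular module — the latter because $\rI^\sigma_{W_r}|_{K_r}$ is independent of $\sigma$, equivalently because the standard module $\cH^{I_r}_{W_r}\otimes_{\cA,\sigma}\dC$ is free with basis $\{T_w\}_{w\in\fW_r}$. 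On such a module $e_r^\epsilon$ acts as the nonzero projector of Lemma \ref{le:eigenvector}, so $e_r^\epsilon f=0$ forces $\sigma(f)=0$ for every $\sigma$, whence $f=0$ since a nonzero element of $\cT_r$ cannot vanish on all of $(\dC^\times)^r$. Either way, $\beta_r^\epsilon$ is both injective and surjective, hence an isomorphism of algebras.
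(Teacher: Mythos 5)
Your proof is correct, and your primary argument is essentially the paper's: the author also reduces to injectivity and settles it by citing the Bernstein presentation, specifically the injectivity of the multiplication map $\dC[I_r\backslash K_r/I_r]\otimes Z(\cH^{I_r}_{W_r})\to\cH^{I_r}_{W_r}$ from \cite{HKP}, which is the same structural fact you repackage as ``$Z(\cH^{I_r}_{W_r})\simeq\cT_r$ is a domain and $\cH^{I_r}_{W_r}$ is torsion-free over it.'' Note that for this you do not need the Pittie--Steinberg freeness of $\cA$ over $\cA^{\fW_r}$: torsion-freeness is immediate because $\cA$ is a Laurent polynomial ring, hence a domain containing $Z(\cH^{I_r}_{W_r})$, and $\cH^{I_r}_{W_r}=\bigoplus_w\cA T_w$. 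Your second, representation-theoretic argument is a genuinely different and equally valid route; it trades the algebraic input for the standard facts that the center acts on $(\rI^\sigma_{W_r})^{I_r}$ through evaluation at the Satake parameter and that $e_r^\epsilon$ acts nontrivially there (which for your purposes needs only $(\rI^\sigma_{W_r})^{I_r}[\kappa_r^\epsilon]\neq\{0\}$, i.e.\ Lemma \ref{le:eigenvector} together with the $\sigma$-independence of $\rI^\sigma_{W_r}|_{K_r}$, rather than the full regular-module statement). That version fits naturally with the principal-series computations later in the paper, at the cost of invoking Bernstein's description of the center anyway to identify the scalar action with evaluation on $\cT_r$.
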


\begin{proof}
It suffices to show that the assignment $f\mapsto e_r^\epsilon f$ is injective. By Bernstein's presentation of $\cH^{I_r}_{W_r}$, the natural map $\dC[I_r\backslash K_r/I_r]\otimes  Z(\cH^{I_r}_{W_r})\to\cH^{I_r}_{W_r}$ is injective \cite{HKP}*{Lemma~1.7.1 and Lemma~2.3.1}. The lemma follows.
\end{proof}

\begin{lem}
The assignment $f\mapsto [K_r:I_r]f$ induces an isomorphism
\[
\cH^+_{W_r}\xrightarrow{\sim}\dC[K_r\backslash G_r(F)/K_r],
\]
in which the convolution on the target is with respect to the Haar measure on $G_r(F)$ that gives $K_r$ volume $1$.
\end{lem}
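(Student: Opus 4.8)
The plan is to reduce the statement to the classical identification of the spherical Hecke algebra of $G_r$ with the center of its Iwahori--Hecke algebra. The first step is to pin down the idempotent $e_r^+$ explicitly. By the case $\epsilon=+$ of Lemma~\ref{le:eigenvector} and the Bruhat decomposition $K_r=\coprod_{i=0}^rI_r^0w_1\cdots w_iI_r^0$, the one-dimensional eigenspace $\dC[I_r\backslash K_r/I_r][\kappa_r^+]$ is spanned by $\CF_{K_r}=\sum_{i=0}^r\CF_{I_r^0w_1\cdots w_iI_r^0}$; and since the Haar measure gives $I_r$ volume $1$, one has $\CF_{K_r}*\CF_{K_r}=[K_r:I_r]\,\CF_{K_r}$, so that $e_r^+=[K_r:I_r]^{-1}\CF_{K_r}$. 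I would record this identity first.

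Next I would describe the map $f\mapsto[K_r:I_r]f$ through the isomorphism $\beta_r^+$. For $z\in Z(\cH^{I_r}_{W_r})$ we get $[K_r:I_r]\,\beta_r^+(z)=[K_r:I_r]\,e_r^+z=\CF_{K_r}*z$, which is left $K_r$-invariant on the nose and right $K_r$-invariant because $(\CF_{K_r}*z)*\CF_{K_r}=\CF_{K_r}*\CF_{K_r}*z=[K_r:I_r]\,(\CF_{K_r}*z)$ by centrality of $z$. Hence $f\mapsto[K_r:I_r]f$ sends $\cH^+_{W_r}$ into $\dC[K_r\backslash G_r(F)/K_r]$, and it is patently injective. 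To check it is an algebra homomorphism for the convolution $*'$ on the target attached to the measure giving $K_r$ volume $1$, one only notes that $*'$ equals $[K_r:I_r]^{-1}$ times the convolution $*$ attached to the measure giving $I_r$ volume $1$, so that $([K_r:I_r]f)*'([K_r:I_r]g)=[K_r:I_r]^2\cdot[K_r:I_r]^{-1}(f*g)=[K_r:I_r]\,(f*g)$. This is a one-line normalization computation.

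It remains to prove surjectivity, which is the only non-formal point: it is equivalent to the assertion that $\beta_r^+$ carries $Z(\cH^{I_r}_{W_r})$ onto the entire space of bi-$K_r$-invariant functions in $\cH^{I_r}_{W_r}$, i.e. onto $\dC[K_r\backslash G_r(F)/K_r]$. This is the classical comparison between the center of the Iwahori--Hecke algebra and the spherical Hecke algebra, a consequence of Bernstein's presentation together with the Satake isomorphism; I would cite \cite{HKP} for it, the same reference used above for Bernstein's presentation. So the main (indeed only) obstacle is this citation, and the proof would be organized as: (i) identify $e_r^+=[K_r:I_r]^{-1}\CF_{K_r}$; (ii) check that $f\mapsto[K_r:I_r]f$ is an injective algebra homomorphism into the spherical Hecke algebra; (iii) invoke the classical surjectivity to conclude it is an isomorphism.
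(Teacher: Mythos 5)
Your proposal is correct and follows essentially the same route as the paper: the paper's proof likewise identifies $e_r^+=\tfrac{1}{[K_r:I_r]}\CF_{K_r}$ via Lemma \ref{le:eigenvector} and then invokes the Bernstein isomorphism from \cite{HKP}*{Section~4.6}. Your steps (ii) are just the routine normalization and well-definedness checks that the paper leaves implicit.
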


In what follows, we identify $\cH^+_{W_r}$ with $\dC[K_r\backslash G_r(F)/K_r]$.

\begin{proof}
By Lemma \ref{le:eigenvector}, $e_r^+$ is nothing but $\tfrac{1}{[K_r:I_r]}\CF_{K_r}$. Thus, the lemma follows from the Bernstein isomorphism (see \cite{HKP}*{Section~4.6}).
\end{proof}

Now let $(V,(\;,\;)_V)$ be a hermitian space over $E$ of \emph{even} dimension. Let $d=d(V)\coloneqq\tfrac{1}{2}\dim_EV$ be the half dimension, $s=s(V)\geq 0$ the Witt index, and $\epsilon=\epsilon(V)\in\{\pm\}$ the sign of $V$, respectively, satisfying $2d-2s=1-(\epsilon1)$. We also let $H_V$ be the unitary group of $V$. We fix an $O_E$-lattice $\Lambda_V$ of $V$ such that $\Lambda_V$ is a subgroup of $\Lambda_V^\vee$ of index $q^{1-(\epsilon 1)}$, where
\[
\Lambda_V^\vee\coloneqq\{x\in V\res \psi_E((x,y)_V)=1\text{ for every }y\in\Lambda_V\}
\]
is the $\psi_E$-dual lattice of $\Lambda_V$. Note that such lattice is unique up to conjugation in $H_V(F)$. Let $L_V\subseteq H_V(F)$ be the stabilizer of $\Lambda_V$, which is a special maximal compact subgroup. We have the Hecke algebra
\[
\cH_V\coloneqq\dC[L_V\backslash H_V(F)/L_V]
\]
with respect to the Haar measure on $H_V(F)$ that gives $L_V$ volume $1$. Since $L_V$ is special, $\cH_V$ is a commutative complex algebra by the Satake isomorphism (see, for example, \cite{Car79}*{Corollary~4.1}).

\begin{definition}\label{de:phi}
We define
\begin{itemize}
  \item the homomorphism
    \[
    \overleftarrow\Theta\colon\cH^\epsilon_{W_r}\to\cT_{\min\{r,s\}}
    \]
    to be the composition of the isomorphism $\beta_r^+\circ(\beta_r^\epsilon)^{-1}\colon\cH^\epsilon_{W_r}\xrightarrow{\sim}\dC[K_r\backslash G_r(F)/K_r]$, the canonical isomorphism $\dC[K_r\backslash G_r(F)/K_r]\simeq\cT_r$, and the homomorphism $\cT_r\to\cT_{\min\{r,s\}}$ sending $F(T_1^{\pm 1},\dots,T_r^{\pm 1})$ to
    \[
    F(T_1^{\mp 1},\dots,T_{\min\{r,s\}}^{\mp 1},q^{\pm(\epsilon 1)},q^{\pm(2+(\epsilon1))},\dots,q^{\pm(2(r-\min\{r,s\}-1)+(\epsilon1))});
    \]

  \item the homomorphism
    \[
    \overrightarrow\Theta\colon\cH_V\to\cT_{\min\{r,s\}}
    \]
    to be the composition of the canonical isomorphism $\cH_V\simeq\cT_s$, and the homomorphism $\cT_s\to\cT_{\min\{r,s\}}$ sending $F(T_1^{\pm 1},\dots,T_s^{\pm 1})$ to
    \[
    F(q^{\pm(2-(\epsilon1))},q^{\pm(4-(\epsilon1))},\dots,q^{\pm(2(s-\min\{r,s\})-(\epsilon1))},T_1^{\pm 1},\dots,T_{\min\{r,s\}}^{\pm 1}).
    \]
\end{itemize}
We then define $\cI_{W_r,V}$ to be the kernel of the induced homomorphism
\[
\overleftarrow\Theta\otimes\overrightarrow\Theta\colon\cH^\epsilon_{W_r}\otimes\cH_V\to\cT_{\min\{r,s\}}.
\]
\end{definition}

\begin{remark}\label{re:phi}
Note that at least one of $\overleftarrow\Theta$ and $\overrightarrow\Theta$ is an isomorphism, which implies that $(\cH^\epsilon_{W_r}\otimes\cH_V)/\cI_{W_r,V}$ is isomorphic to $\cT_{\min\{r,s\}}$. In particular, $(\cH^\epsilon_{W_r}\otimes\cH_V)/\cI_{W_r,V}$ is a smooth commutative complex algebra.
\end{remark}

\section{Weil representations and the spherical module}
\label{ss:weil}

In this section, we review the Schr\"{o}dinger model of the Weil representation, introduce the spherical module, and prove several properties. Let $(V,(\;,\;)_V)$ be a hermitian space over $E$ of even dimension, with $d=d(V)$ and $\epsilon=\epsilon(V)$. Recall that we have fixed an $O_E$-lattice $\Lambda_V$ of $V$ satisfying that $\Lambda_V$ is a subgroup of $\Lambda_V^\vee$ of index $q^{1-(\epsilon 1)}$.

For an element $x=(x_1,\dots,x_r)\in V^r$, we denote by
\[
T(x)\coloneqq\((x_i,x_j)_V\)_{1\leq i,j\leq r}\in\Herm_r(F)
\]
the moment matrix of $x$. Put $\Sigma_r(V)\coloneqq\{x\in V^r\res T(x)=0_r\}$. We have the Fourier transform $C^\infty_c(V^r)\to C^\infty_c(V^r)$ sending $\phi$ to $\widehat\phi$ defined by the formula
\[
\widehat\phi(x)\coloneqq\int_{V^r}\phi(y)\psi_E\(\sum_{i=1}^r(x_i,y_i)_V\)\rd y,
\]
where $\r{d}y$ is the self-dual Haar measure on $V^r$ with respect to $\psi_E$.

Let $(\omega_{W_r,V},\cV_{W_r,V})$ be the Weil representation of $G_r(F)\times H_V(F)$ (with respect to the additive character $\psi_F$ and the trivial splitting character). We recall the action under the Schr\"{o}dinger model $\cV_{W_r,V}\simeq C^\infty_c(V^r)$ as follows:
\begin{itemize}
  \item for $a\in\GL_r(E)$ and $\phi\in C^\infty_c(V^r)$, we have
     \[
     \omega_{W_r,V}(m(a))\phi(x)=|\dtm a|_E^d\cdot \phi(x a);
     \]

  \item for $b\in\Herm_r(F)$ and $\phi\in C^\infty_c(V^r)$, we have
     \[
     \omega_{W_r,V}(m(b))\phi(x)=\psi_F(\tr bT(x))\cdot \phi(x);
     \]

  \item for $\phi\in C^\infty_c(V^r)$, we have
     \[
     \omega_{W_r,V}\(\(\begin{smallmatrix} & 1_r \\ -1_r & \end{smallmatrix}\)\)\phi(x)=(\epsilon1)^r\cdot\widehat\phi(x);
     \]

  \item for $h\in H_V(F)$ and $\phi\in C^\infty_c(V^r)$, we have
     \[
     \omega_{W_r,V}(h)\phi(x)=\phi(h^{-1}x).
     \]
\end{itemize}

\begin{definition}
We define the \emph{spherical module}\footnote{Warning: when $\epsilon=-$, elements in the spherical module are in general \emph{not} spherical with respect to any special maximal compact subgroup of $G_r(F)$.} $\cS_{W_r,V}$ to be the subspace of $\cV_{W_r,V}$ consisting of $I_r\times L_V$-invariant vectors on which $\dC[I_r\backslash K_r/I_r]$ acts by the character $\kappa_r^\epsilon$ (Definition \ref{de:kappa}), as a module over $\cH_{W_r}^\epsilon\otimes\cH_V$ via the representation $\omega_{W_r,V}$. We denote by $\Sph(V^r)$ the corresponding subspace of $C^\infty_c(V^r)$ under the Schr\"{o}dinger model.
\end{definition}

\begin{lem}\label{le:generator}
The function $\CF_{\Lambda_V^r}$ belongs to $\Sph(V^r)$.
\end{lem}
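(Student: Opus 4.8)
The plan is to verify the three defining conditions of $\Sph(V^r)$ for the function $\phi_0 \coloneqq \CF_{\Lambda_V^r}$: invariance under $L_V$ acting on $V$, invariance under the Iwahori subgroup $I_r \subseteq K_r$, and that $\dC[I_r\backslash K_r/I_r]$ acts on $\phi_0$ through the character $\kappa_r^{\epsilon}$. The first condition is immediate: $L_V$ is by definition the stabilizer of $\Lambda_V$, so $\omega_{W_r,V}(h)\phi_0(x) = \phi_0(h^{-1}x) = \phi_0(x)$ for $h \in L_V$. For the remaining two conditions I would use the explicit Schr\"odinger-model formulas recalled just above the lemma, together with the realization of the Weyl group elements $w_1, w'_\sigma$ inside $K_r$.

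For the $I_r$-invariance and the $\kappa_r^\epsilon$-eigenproperty, the key observation is that $\dC[I_r\backslash K_r/I_r]$ is generated (as an algebra) by $\CF_{I_r}$ together with the generators $\CF_{I_r w_1 I_r}$ and $\CF_{I_r w'_{(i,i+1)} I_r}$ for $1 \le i \le r-1$; since $K_r$ is generated by $I_r$ and lifts of the Weyl generators, it suffices to compute the action of these elements on $\phi_0$. The generators $w'_{(i,i+1)}$ act via $\omega_{W_r,V}(m(a))$ for a permutation matrix $a \in \GL_r(O_E)$, and by the formula $\omega_{W_r,V}(m(a))\phi(x) = |\det a|_E^d\, \phi(xa)$ with $|\det a|_E = 1$; since permuting the columns of a tuple $x \in \Lambda_V^r$ preserves membership in $\Lambda_V^r$, one gets $\omega_{W_r,V}(w'_{(i,i+1)})\phi_0 = \phi_0$, hence $\CF_{I_r w'_{(i,i+1)} I_r} \cdot \phi_0 = q^2 \phi_0$ after accounting for the cardinality $[I_r w'_{(i,i+1)} I_r : I_r] = q^2$ (the index of an $\Res_{E/F}\GL_2$-type double coset). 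This matches $\kappa_r^\epsilon$.

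The substantive computation—and the main obstacle—is the action of the generator $w_1$, which mixes the first coordinate subspace via a partial Fourier transform. Concretely, $w_1$ acts as a Fourier transform in the first variable only (combined with the appropriate Weil-index/splitting-character constant $(\epsilon 1)$ coming from the rank-one piece), so computing $\omega_{W_r,V}(w_1)\phi_0$ reduces to evaluating the rank-one Fourier transform of $\CF_{\Lambda_V}$ on $V$. Here the lattice condition enters decisively: because $\Lambda_V \subseteq \Lambda_V^\vee$ has index $q^{1-(\epsilon 1)}$ (so $\Lambda_V$ is self-dual when $\epsilon = +$ and almost self-dual when $\epsilon = -$), a Poisson-summation / standard Gaussian-integral computation gives $\widehat{\CF_{\Lambda_V}} = c \cdot \CF_{\Lambda_V^\vee}$ for an explicit constant $c$ depending on the measure normalization, and $\CF_{\Lambda_V^\vee}$ differs from $\CF_{\Lambda_V}$ only when $\epsilon = -$, where one picks up the extra coset contributing the factor $-q$ (or $q$) predicted by $\kappa_r^-$ on $\CF_{I_r w_1 I_r}$. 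One then bookkeeps the double-coset cardinality for $I_r w_1 I_r$ and checks that the total scalar is exactly $q(-q)^{((\epsilon 1)-1)/2}$, i.e.\ $q$ when $\epsilon = +$ and $-q \cdot$ (correction) matching Definition \ref{de:kappa}; simultaneously this verifies $I_r$-invariance since $I_r$ is generated by lattice-preserving elements of the same parabolic type. Assembling these pieces shows $\phi_0 \in \Sph(V^r)$. $\qed$
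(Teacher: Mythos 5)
Your overall strategy is the same as the paper's: check $L_V$-invariance directly, dispose of the generators $\CF_{I_rw'_{(i,i+1)}I_r}$ by noting they act through the index $q^2$, and reduce the generator $\CF_{I_rw_1I_r}$ to a rank-one computation with the Fourier transform of $\CF_{\Lambda_V}$. The $\epsilon=+$ case and the $\GL$-type generators are fine as you describe them.

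There is, however, a genuine gap in the decisive step, namely the action of $\CF_{I_rw_1I_r}$ when $\epsilon=-$. You compute the action of the Weyl element $w_1$ itself (a partial Fourier transform) and then propose to ``bookkeep the double-coset cardinality.'' But $\omega_{W_r,V}(w_1)\CF_{\Lambda_V^r}$ is proportional to $\CF_{\Lambda_V^\vee}\otimes\CF_{\Lambda_V^{r-1}}$, which for $\epsilon=-$ is \emph{not} proportional to $\CF_{\Lambda_V^r}$ since $\Lambda_V\subsetneq\Lambda_V^\vee$; multiplying by the coset count $[I_rw_1I_r:I_r]=q$ cannot repair the support. What makes the Hecke operator act by a scalar is the unipotent part of the coset decomposition $I_rw_1I_r=\coprod_{b\in O_F/\fp_F}n(b)w_1I_r$: summing over representatives produces the character sum $\sum_{b\in O_F/\fp_F}\psi_F(b(x,x)_V)$ multiplying $\widehat{\CF_{\Lambda_V}}(x)=q^{-1}\CF_{\Lambda_V^\vee}(x)$, and this sum vanishes precisely on $\Lambda_V^\vee\setminus\Lambda_V$ because $\Lambda_V=\{x\in\Lambda_V^\vee\res\psi_F((x,x)_V)=1\}$ (anisotropy of the induced form on $\Lambda_V^\vee/\Lambda_V$). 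Your phrase ``one picks up the extra coset contributing the factor $-q$'' has the mechanism backwards: the extra cosets of $\Lambda_V^\vee/\Lambda_V$ must be \emph{annihilated} by this averaging, after which the surviving scalar $q\cdot(\epsilon 1)\cdot q^{-1}=-1$ matches $\kappa_r^-(\CF_{I_rw_1I_r})=q(-q)^{-1}=-1$. Without identifying this cancellation, the computation as written would output a function supported on $\Lambda_V^\vee\times\Lambda_V^{r-1}$ rather than an eigenvector. (Your closing claim that this ``simultaneously verifies $I_r$-invariance'' is also too quick; the clean statement, used implicitly in the paper, is that the Siegel parahoric $I_r^0\supseteq I_r$ already fixes $\CF_{\Lambda_V^r}$.)
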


\begin{proof}
The case where $\epsilon=+$ is easy and well-known.

Now we consider the case where $\epsilon=-$. Since $L_V$ is the stabilizer of $\Lambda_V$, $\CF_{\Lambda_V^r}$ is fixed by $L_V$. As $I_r^0$ acts trivially on $\CF_{\Lambda_V^r}$, we have
\[
\omega_{W_r,V}(\CF_{I_rw'_{(i,i+1)}I_r})(\CF_{\Lambda_V^r})=[I_rw'_{(i,i+1)}I_r:I_r]\CF_{\Lambda_V^r}=q^2\CF_{\Lambda_V^r}
\]
for every $1\leq i\leq r-1$ as $w'_{(i,i+1)}\in I_r^0$. It remains to show that
\begin{align}\label{eq:generator}
\omega_{W_r,V}(\CF_{I_rw_1I_r})(\CF_{\Lambda_V^r})=-\CF_{\Lambda_V^r}.
\end{align}
However, we have
\[
\omega_{W_r,V}(\CF_{I_rw_1I_r})(\CF_{\Lambda_V^r})=\omega_{W_r,V}(\CF_{I_rw_1I_r})(\CF_{\Lambda_V}\otimes\CF_{\Lambda_V^{r-1}})
=\omega_{W_1,V}(\CF_{I_1w_1I_1})(\CF_{\Lambda_V})\otimes\CF_{\Lambda_V^{r-1}},
\]
and
\[
\omega_{W_1,V}(\CF_{I_1w_1I_1})(\CF_{\Lambda_V})(x)=-\sum_{b\in O_F/\fp_F}\psi_F(b(x,x)_V)\widehat{\CF_{\Lambda_V}}(x).
\]
As $\widehat{\CF_{\Lambda_V}}=q^{-1}\CF_{\Lambda_V^\vee}$, we have
\[
\omega_{W_1,V}(\CF_{I_1w_1I_1})(\CF_{\Lambda_V})(x)=-\CF_{\Lambda_V}
\]
since $\Lambda_V=\{x\in\Lambda_V^\vee\res\psi_F((x,x)_V)=1\}$. Thus, \eqref{eq:generator} holds and the lemma follows.
\end{proof}

\begin{assumption}\label{as:invariant}
The representation $C^\infty_c(V^r)^{I_r}[\kappa_r^\epsilon]$ of $H_V(F)$ is generated by $\CF_{\Lambda_V^r}$.
\end{assumption}

\begin{proposition}\label{pr:invariant}
When $q$ is odd, Assumption \ref{as:invariant} holds.
\end{proposition}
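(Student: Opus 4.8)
The plan is to follow the standard strategy for computing theta-invariants under a maximal compact, adapted to the almost-unramified ($\epsilon=-$) setting, namely: reduce to a statement about $I_r$-invariant Schr\"odinger-model functions supported on a bounded region, use the action of the Weil representation to identify these functions with a space of functions on a finite set, and then exploit the $H_V(F)$-action (more precisely, the $L_V$-action together with enough of $H_V(F)$ to move lattices around) to show the whole space is generated by $\CF_{\Lambda_V^r}$. First I would observe that since $\psi_F$ has conductor $O_F$ and $\Lambda_V$ is a maximal lattice, the Weil-representation operators $\omega_{W_r,V}(m(a))$ for $a\in\GL_r(O_E)$, $\omega_{W_r,V}(n(b))$ for $b\in\Herm_r(O_F)$, and the partial Fourier transforms attached to the simple reflections $w_i$ all preserve a natural filtration of $C^\infty_c(V^r)$ by lattices of the form $\Lambda^r$ with $\fp_E\Lambda_V^\vee\subseteq\Lambda\subseteq\Lambda_V^\vee$; the Iwahori-invariance plus the $\kappa_r^\epsilon$-eigenvalue condition then pins down, via Lemma \ref{le:eigenvector} and Remark \ref{re:eigenvector}, exactly which combination of such functions can occur.

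The technical heart is the passage to a finite model. Here is where the hypothesis ``$q$ odd'' enters: I would invoke Waldspurger's generalized lattice model of the Weil representation (as the remark after the theorem statement indicates). Over the residue field, the Weil representation of the finite unitary group $H_V(\dF_q)$ (and its interaction with the finite symplectic data coming from $W_r/\fp_E W_r$) is available only when $q$ is odd, because the lattice model requires a splitting of the relevant metaplectic cover that exists precisely in odd residue characteristic. Concretely, I would show that $C^\infty_c(V^r)^{I_r}[\kappa_r^\epsilon]$ is, as an $H_V(F)$-module, an induction from the parahoric $L_V$ (or a suitable Moy--Prasad layer) of a representation of the finite group $L_V/L_V^+$, and that this finite representation is an explicit piece of the Weil representation of the finite unitary group acting on functions on $(\Lambda_V^\vee/\fp_E\Lambda_V^\vee)^r$ or a related space; the eigenvector $\fe_r^-$ of Lemma \ref{le:eigenvector} selects the sign-character component, which in the finite Weil representation is a single irreducible constituent, hence cyclic.

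With the finite model in hand, generation by $\CF_{\Lambda_V^r}$ becomes the statement that the vector corresponding to $\CF_{\Lambda_V^r}$ (the characteristic function of the image of the maximal lattice, equivalently a highest-weight-type vector for the finite Weil representation) generates that irreducible constituent under $H_V(\dF_q)$ — which follows from irreducibility together with the nonvanishing already checked in Lemma \ref{le:generator}. Finally I would transfer back: since $H_V(F)=L_V\cdot(\text{lattice-changing elements})$ and the filtration by $\Lambda^r$ is exhausted by translating $\Lambda_V^r$, the $\cH_V$-module (equivalently $H_V(F)$-module) generated by $\CF_{\Lambda_V^r}$ is all of $C^\infty_c(V^r)^{I_r}[\kappa_r^\epsilon]$. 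The main obstacle, as flagged, is making the finite lattice model precise in the skew-hermitian/hermitian dual-pair setting and verifying the splitting of the cover is compatible with the $I_r$-level structure on the $G_r$-side; everything else is bookkeeping with lattices and the explicit Schr\"odinger-model formulas recalled above.
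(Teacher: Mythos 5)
Your overall strategy is the right one and matches the paper's: for $\epsilon=-$ the proof does go through Waldspurger's generalized lattice model (this is exactly where $q$ odd enters), with the finite model being the Weil representation $\omega_{\widetilde{W}_r,\widetilde{V}}$ attached to the \emph{one-dimensional} hermitian space $\widetilde{V}=\Lambda_V^\vee/\Lambda_V$ over the residue field (for $\epsilon=+$ one simply cites Howe). But two of your steps have genuine gaps. First, the claim that $C^\infty_c(V^r)^{I_r}[\kappa_r^\epsilon]$ is an induction from $L_V$ of a representation of $L_V/L_V^+$ is neither justified nor what Waldspurger provides: the $H_V(F)$-translates of $\Lambda_V^\vee\otimes_{O_E}O_E^{2r}$ do not tile $V\otimes_EW_r$, so the lattice model $\cL(\Lambda_V)$ is not a compact induction of its depth-zero piece $\cL(\Lambda_V)_0$. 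The input one actually needs is Waldspurger's Corollaire III.2, the nontrivial generation statement $\cL(\Lambda_V)[\eta]=\omega_{W_r,V}(H_V(F))\cdot\cL(\Lambda_V)_0[\eta]$; your concluding ``transfer back'' via lattice-changing elements is precisely the hard content of that theorem and cannot be replaced by bookkeeping with lattices.

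Second, your analysis of the finite model aims at the wrong group. What is needed there is not irreducibility of a constituent under $H_V(\dF_q)$ (here $\rU(\widetilde{V})$ is just $\rU(1)$ of the residue extension, an abelian group), but rather that $\cV_{\widetilde{W}_r,\widetilde{V}}^{I_r}$ --- the $I_r$-invariants for the action of $K_r$ inflated from $\rU(\widetilde{W}_r)$ --- is \emph{one-dimensional} and carries the Hecke character $\kappa_r^-$. This follows from a direct support computation in the finite Schr\"{o}dinger model $C^\infty(\widetilde{V}^r)$: an $I_r$-invariant function must be supported at $0$. Combined with Lemma \ref{le:generator}, which places the lattice-model avatar of $\CF_{\Lambda_V^r}$ inside the one-dimensional space $\cL(\Lambda_V)_0^{I_r}[\kappa_r^-]$, and with Waldspurger's generation theorem applied after convolving with $\CF_{I_r}$ and projecting to the $\kappa_r^-$-eigenspace, the proposition follows. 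Your proposal has the right ingredients on the table but does not correctly assemble them at these two points.
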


\begin{proof}
When $\epsilon=+$, this is proved in \cite{How79}*{Theorem~10.2}. Now we consider the case where $\epsilon=-$. The proof uses Waldspurger's generalized lattice model in \cite{Wal90}, which is the reason we assume $q$ odd.

Let $\widetilde{F}$ and $\widetilde{E}$ be the residue fields of $F$ and $E$, respectively. Let $\widetilde{W}_r$ be the analogue of $W_r$ over $\widetilde{E}$. Fix a uniformizer $\varpi$ of $F$. Then we may equip $\widetilde{V}\coloneqq\Lambda_V^\vee/\Lambda_V$ with an $\widetilde{E}$-valued hermitian form $(\;,\;)_{\widetilde{V}}$ such that $\langle\widetilde{x},\widetilde{y}\rangle_{\widetilde{V}}$ equals the reduction of $\varpi^{1-\fc}\langle x,y\rangle_V$ in $\widetilde{E}$ where $x$ and $y$ are arbitrary liftings of $\widetilde{x}$ and $\widetilde{y}$ to $\Lambda_V^\vee$, respectively. Then $(\widetilde{V},(\;,\;)_{\widetilde{V}})$ is a hermitian space over $\widetilde{E}$ of dimension $1$. Let $(\omega_{\widetilde{W}_r,\widetilde{V}},\cV_{\widetilde{W}_r,\widetilde{V}})$ be the Weil representation of $\rU(\widetilde{W}_r)\times\rU(\widetilde{V})$, hence of $K_r$ as well by inflation. We claim that
\begin{itemize}
  \item [(a)] $\cV_{\widetilde{W}_r,\widetilde{V}}^{I_r}$ is of dimension one, on which $\dC[I_r\backslash K_r/I_r]$ acts via the character $\kappa_r^-$.
\end{itemize}

Assuming the claim, let $\rho$ be the unique irreducible representation of $K_r$ contained in $\cV_{\widetilde{W}_r,\widetilde{V}}$ such that $\rho^{I_r}\neq\{0\}$. Using the lattice $\Lambda_V\otimes_{O_E}O_E^{2r}$ of $V\otimes_E W_r$, we may realize the Weil representation $\omega_{W_r,V}$ in the space $\cL(\Lambda_V)$ of locally constant, compactly supported functions from $V\otimes_E W_r$ to $\cV_{\widetilde{W}_r,\widetilde{V}}$ satisfying a certain transformation law under the translation by $\Lambda_V^\vee\otimes_{O_E}O_E^{2r}$. Let $\cL(\Lambda_V)_0$ be the subspace of those functions supported on $\Lambda_V^\vee\otimes_{O_E}O_E^{2r}$. Then evaluating at zero induces an isomorphism $\cL(\Lambda_V)_0\simeq\cV_{\widetilde{W}_r,\widetilde{V}}$ of representations of $K_r$. By a deep result of Waldspurger \cite{Wal90}*{Corollaire~III.2}, we have $\cL(\Lambda_V)[\eta]=\omega_{W_r,V}(H_V(F))\cdot\cL(\Lambda_V)_0[\eta]$. After convolution with $\CF_{I_r}$, we have
\begin{align}\label{eq:invariant}
\cL(\Lambda_V)^{I_r}[\kappa_r^-]=\omega_{W_r,V}(H_V(F))\cdot\cL(\Lambda_V)_0^{I_r}[\kappa_r^-].
\end{align}
Let $l_0\in\cL(\Lambda_V)$ be the element that corresponds to $\CF_{\Lambda_V^r}$ in the Schr\"{o}dinger model. It is easy to check that $l_0$ has support in $\Lambda_V^\vee\otimes_{O_E}O_E^{2r}$. Then, by Lemma \ref{le:generator}, we have $l_0\in\cL(\Lambda_V)_0^{I_r}[\kappa_r^-]$. Thus, the proposition follows from the claim (a) and \eqref{eq:invariant}.

For the claim (a), under the Schr\"{o}dinger model $C^\infty(\widetilde{V}^r)$, it is easy to see that a function that is invariant under $I_r$ has to be supported on the zero element. Moreover, such function is an eigenvector of $\dC[I_r\backslash K_r/I_r]$ with respect to the character $\kappa_r^-$. The claim (a) follows.
\end{proof}

The following proposition improves \cite{Ral82}*{Proposition~2.2} in the unitary case, proved by a refinement of Rallis' argument, which answers a question in \cite{Ral82}*{Remark~2.3} for the hermitian Siegel parahoric subgroup.

\begin{proposition}\label{pr:vanishing}
Let $\phi$ be an element in $C^\infty_c(V^r)^{I_r^0}$ such that $\omega_{W_r,V}(g)\phi$ vanishes on $\Sigma_r(V)$ for every $g\in G_r(F)$. Then $\phi=0$.
\end{proposition}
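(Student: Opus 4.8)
The plan is first to recast the hypothesis support-theoretically and pare down the set of group elements that matter. Since $\Sigma_r(V)$ is closed and every element of $C^\infty_c(V^r)$ is locally constant, ``$\omega_{W_r,V}(g)\phi$ vanishes on $\Sigma_r(V)$'' is the same as ``$\supp\omega_{W_r,V}(g)\phi$ is disjoint from $\Sigma_r(V)$''; so the hypothesis asserts that $\phi$ generates a $G_r(F)$-subrepresentation of $\cV_{W_r,V}\simeq C^\infty_c(V^r)$ all of whose vectors avoid $\Sigma_r(V)$, and the task is to show such a subrepresentation has no nonzero $I_r^0$-fixed vector. By the Iwasawa decomposition $G_r(F)=P_r^0(F)K_r$, together with the facts that $N_r^0$ acts trivially on functions restricted to $\Sigma_r(V)$ (the moment matrix vanishes there) while $M_r^0\simeq\Res_{E/F}\GL_r$ acts through the $\GL_r(E)$-action on $V^r$, which preserves $\Sigma_r(V)$, up to an unramified twist, the hypothesis is equivalent to: $\omega_{W_r,V}(k)\phi$ vanishes on $\Sigma_r(V)$ for all $k\in K_r$. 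Then, using the Bruhat decomposition $K_r=\coprod_{i=0}^r I_r^0(w_1\cdots w_i)I_r^0$ and the $I_r^0$-invariance of $\phi$, it becomes: for every $0\leq i\leq r$ and every $u\in I_r^0$, the function $\omega_{W_r,V}(u)\phi_i$ vanishes on $\Sigma_r(V)$, where $\phi_i\coloneqq\omega_{W_r,V}(w_1\cdots w_i)\phi$ is a partial Fourier transform of $\phi$ in the first $i$ of the $r$ copies of $V$.

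Next I would induct on the Witt index $s(V)$, in the spirit of \cite{Ral82}. In the inductive step write $V=\dH\oplus V'$ with $\dH=Ee_+\oplus Ee_-$ a hyperbolic plane and $s(V')=s(V)-1$; decomposing $V^r$ into $e_+$-, $V'$- and $e_-$-components and applying a partial Fourier transform in the $e_-$-component puts $\omega_{W_r,V}$ in the corresponding mixed model, in which the maximal parabolic of $H_V$ stabilizing the isotropic line $Ee_-$ acts by an explicit geometric action and whose subspace of functions supported on the slice where the $e_-$-coordinate vanishes carries $\omega_{W_r,V'}$; under this transform $\Sigma_r(V)$ is carried to the preimage of $\Sigma_r(V')$ together with lower-rank strata, and the $I_r^0$-invariance survives untouched since $I_r^0$ sits on the $G_r$-side. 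Feeding the functions $\phi_i$ through and applying the inductive hypothesis on the $V'$-slice forces each $\phi_i$, hence $\phi$, to be supported on the union of these extra strata, which one then eliminates by acting with the $\GL_1(E)$-factor of the Levi of the stabilizer of $Ee_-$ --- equivalently, translating the $e_-$-coordinate --- exactly Rallis's device, run compatibly with the reduction of the previous paragraph.

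The base case $s(V)=0$ is where the $I_r^0$-invariance is indispensable. Now $\Sigma_r(V)=\{0\}$, and in particular $(\omega_{W_r,V}(g)\phi)(0)=0$ for all $g\in G_r(F)$. The point is that, $V$ being anisotropic, the moment map satisfies: $T(x)$ bounded forces $x$ bounded; combined with the $I_r^0$-invariance --- which bounds $\supp\phi$ via the integral part of $N_r^0$ and bounds $\supp\widehat\phi$ via the opposite-unipotent part of the Siegel parahoric --- this makes $C^\infty_c(V^r)^{I_r^0}$ finite-dimensional. Hence ``$(\omega_{W_r,V}(g)\phi)(0)=0$ for all $g$ implies $\phi=0$'' is a finite-dimensional non-degeneracy statement, which I would settle via the identity stating that $(\omega_{W_r,V}(\delta\, n(b))\phi)(0)$ equals, up to a sign, $\int_{V^r}\psi_F(\tr bT(x))\,\phi(x)\,\rd x$, where $\delta=\left(\begin{smallmatrix}&1_r\\-1_r&\end{smallmatrix}\right)$: its vanishing for all $b\in\Herm_r(F)$ kills the push-forward measure $T_*(\phi\,\rd x)$ on $\Herm_r(F)$, and repeating this for $\widehat\phi$ and for the $M_r^0$-rescalings of $\phi$, while using $I_r^0$-invariance to run the descent through the finitely many strata of the moment matrix, forces $\phi=0$.

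The chief obstacle is the interplay between the two reductions in the range $s(V)\geq1$: the nontrivial elements of $I_r^0$ --- especially those in the opposite-unipotent part of the Siegel parahoric --- act on $C^\infty_c(V^r)$ by non-local, convolution-type operators, so it is not formal that the conditions ``$\omega_{W_r,V}(u)\phi_i$ vanishes on $\Sigma_r(V)$'' can be repackaged as a clean statement about the $\phi_i$ alone; showing that these conditions are precisely what is needed, both to eliminate the residual strata in the inductive step and to organize the descent in the base case, is the technical heart of the proof, and is where the refinement of Rallis's argument beyond \cite{Ral82}*{Proposition~2.2} enters.
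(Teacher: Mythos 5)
Your strategy (induction on the Witt index $s(V)$, with an anisotropic base case handled by Fourier analysis on the moment map) is genuinely different from the paper's, and as written it has two gaps; the more serious one is the base case. For anisotropic $V$ one has $\Sigma_r(V)=\{0\}$, and for $I_r^0$-invariant $\phi$ the family of conditions $(\omega_{W_r,V}(g)\phi)(0)=0$, $g\in G_r(F)$, collapses --- via the decomposition $G_r(F)=\bigcup_{w\in\{\pm1\}^r}P_r^0(F)wI_r^0$ and the fact that evaluation at $0$ is $P_r^0(F)$-equivariant up to a character --- to the \emph{finitely many} linear conditions that the $2^r$ partial Fourier transforms of $\phi$ vanish at $0$; your ``all $b\in\Herm_r(F)$ and all $M_r^0$-rescalings'' give nothing beyond these, since $n(b)$ and $m(a)$ lie in $P_r^0(F)$. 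Already for $r=1$ and $\dim_EV=2$ the hypothesis amounts to just $\phi(0)=0$ and $\int_V\phi=0$, so the proposition is the assertion $\dim C^\infty_c(V)^{I_1^0}\leq 2$. Vanishing of the pushforward $T_*(\phi\,\rd x)$ does not imply $\phi=0$ for any soft reason (the nonzero fibers of $T$ are positive-dimensional), so everything rests on the phrase ``using $I_r^0$-invariance to run the descent through the finitely many strata,'' which is not an argument. Likewise, in the inductive step you explicitly defer the transport of the vanishing conditions through the partial Fourier transform in the $e_-$-coordinate, calling it ``the technical heart of the proof''; so the proposal identifies the difficulties without resolving either of them.

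For comparison, the paper inducts on $r$ rather than on $s(V)$, and the input that replaces your base-case descent is representation-theoretic. By Rallis's Lemma~2.1, $\phi\res_{\Sigma_r^t(V)}=0$ is equivalent to $\phi=0$ in $C^\infty_c(V^r)_{N_r^t(F)\cap N_r^0(F)}$; the induction on $r$ (applied to the spaces $V_\zeta=Y_\zeta^\perp/Y_\zeta$ arising from the fibration of $V^r$ over $V^{r-t}$) upgrades this to vanishing in the full Jacquet modules $C^\infty_c(V^r)_{N_r^t(F)}$ for all $0\leq t\leq r-1$ and all translates of $\phi$, using only the conditions indexed by $w\in\{\pm1\}^r$ (a sharper reduction than your reduction to all of $K_r$). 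Hence the subrepresentation generated by $\phi$ is quasi-cuspidal, hence supercuspidal, hence has no nonzero $I_r^0$-fixed vectors. That last fact --- supercuspidal constituents of $\omega_{W_r,V}$ carry no parahoric-fixed vectors --- is exactly what your finitely many moment-map conditions cannot see by elementary manipulation, and it is the engine of the proof. If you want to salvage your outline, the anisotropic base case already requires this (or an equivalent) input, at which point the Witt-index induction becomes superfluous.
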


\begin{proof}
We prove by induction on $r$ that
\begin{itemize}
  \item [(a)] If $\phi\in C^\infty_c(V^r)^{I_r^0}$ satisfies that $\omega_{W_r,V}(w)\phi$ vanishes on $\Sigma_r(V)$ for every $w\in\{\pm1\}^r\subseteq\fW_r\subseteq K_r\subseteq G_r(F)$, then $\phi=0$.
\end{itemize}

The case for $r=0$ is trivial. For a subgroup $\Gamma$ of $G_r(F)$, we denote by $C^\infty_c(V^r)_\Gamma$ the $\Gamma$-invariant quotient of $C^\infty_c(V^r)$. For $0\leq t\leq r$, let $\Sigma_r^t(V)$ be the subset of $x\in V^r$ such that $T(x)_{ij}=0$ if $\max\{i,j\}>t$ (so in particular, $\Sigma_r^0(V)=\Sigma_r(V)$). By the same argument for \cite{Ral82}*{Lemma~2.1}, an element $\phi\in C^\infty_c(V^r)$ equals zero in $C^\infty_c(V^r)_{N_r^t(F)\cap N_r^0(F)}$ if and only if $\phi\res_{\Sigma_r^t(V)}=0$. In particular, for $\phi$ in (a), we have $\omega_{W_r,V}(w)\phi=0$ in $C^\infty_c(V^r)_{N_0^t(F)}$ for every $w\in\{\pm1\}^r$. We further claim that
\begin{itemize}
  \item [(b)] For $\phi$ in (a), we have $\omega_{W_r,V}(w)\phi=0$ in $C^\infty_c(V^r)_{N_r^t(F)}$ for every $w\in\{\pm1\}^r$ and every $1\leq t\leq r-1$.
\end{itemize}
Assuming (b), we prove (a). By the Iwasawa decomposition $G_r(F)=\coprod_{w\in\{\pm1\}^r}P_r(F)w I_r^0$ and the fact that $P_r(F)$ normalizes $N_r^t(F)$, we have $\omega_{W_r,V}(g)\phi=0$ in $C^\infty_c(V^r)_{N_r^t(F)}$ for every $g\in G_r(F)$ and every $0\leq t\leq r-1$. In particular, the subrepresentation $\cV_\phi\subseteq C^\infty_c(V^r)$ of $G_r(F)$ generated by $\phi$ is quasi-cuspidal. Since $\cV_\phi$ is finitely generated, it is admissible, hence supercuspidal. Thus, we have $\cV_\phi^{I_r^0}=0$, which implies $\phi=0$.

It remains to prove (b). Take $w$ and $t$ as in (b). Let $\tilde{P}_r^t$ be the subgroup of $P_r^t$ whose Levi factor belongs to $G_t\times\{1_{r-t}\}$. We regard $C^\infty_c(V^r)$ as $C^\infty_c(V^{r-t},C^\infty_c(V^t))$ by identifying $\phi$ as the assignment sending $\zeta=(\zeta_{t+1},\dots,\zeta_r)\in V^{r-t}$ to the function $\phi_\zeta\in C^\infty_c(V^t)$ defined by the formula $\phi_\zeta(x)=\phi(x_1,\dots,x_t,\zeta_{t+1},\dots,\zeta_r)$ for $x=(x_1,\dots,x_t)\in V^t$. For every $\zeta\in V^{r-t}$, there is a unique representation $\omega_{W_r,V}^\zeta$ of $\tilde{P}_r^t(F)$ on $C^\infty_c(V^t)$ such that for every $g\in\tilde{P}_r^t(F)$, the following diagram
\[
\xymatrix{
C^\infty_c(V^r) \ar[r]^-{-_\zeta}\ar[d]_-{\omega_{W_r,V}(g)} & C^\infty_c(V^t) \ar[d]^-{\omega_{W_r,V}^\zeta(g)} \\
C^\infty_c(V^r) \ar[r]^-{-_\zeta} & C^\infty_c(V^t)
}
\]
commutes. For the claim $\omega_{W_r,V}(w)\phi=0$ in $C^\infty_c(V^r)_{N_r^t(F)}$ in (b), it suffices to show that for $\phi$ in (a), the image of $\omega_{W_r,V}(w)\phi$ in $C^\infty_c(V^{r-t},C^\infty_c(V^t)_{N^t_r(F)})$ is zero. There are two cases.

Suppose that $\zeta\not\in\Sigma_{r-t}(V)$. Then we already know that $(\omega_{W_r,V}(w)\phi)_\zeta=0$ in $C^\infty_c(V^t)_{N^t_r(F)\cap N^0_r(F)}$, hence in $C^\infty_c(V^t)_{N^t_r(F)}$.

Suppose that $\zeta\in\Sigma_{r-t}(V)$. Let $Y_\zeta$ be the $E$-subspace of $V$ spanned by $\zeta_{t+1},\dots,\zeta_r$, and put $V_\zeta\coloneqq Y_\zeta^\perp/Y_\zeta$ which is a hermitian space of even dimension. It is clear that as a representation of $G_t(F)$, the Jacquet module $C^\infty_c(V^t)_{N^t_r(F)}$ is isomorphic to the Weil representation $\omega_{W_t,V_\zeta}$ (restricted to $G_t(F)$). Let $\tilde{w}\in\{\pm 1\}^r$ be the unique element that acts trivially on $G_t(F)$ and such that $w\tilde{w}^{-1}\in G_t(F)$. Then $(\omega_{W_r,V}(\tilde{w})\phi)_\zeta$, regarded as an element in $C^\infty_c(V^t)_{N^t_r(F)}$, satisfies the prerequisites in (a) for $\omega_{W_t,V_\zeta}$. Now by the induction hypothesis, we have $(\omega_{W_r,V}(\tilde{w})\phi)_\zeta=0$ in $C^\infty_c(V^t)_{N^t_r(F)}$. As the element $w\tilde{w}^{-1}$ normalizes $N^t_r(F)$, we have $(\omega_{W_r,V}(w)\phi)_\zeta=0$ in $C^\infty_c(V^t)_{N^t_r(F)}$ as well.

Therefore, (b), hence (a), hence the proposition are all proved.
\end{proof}

\section{Annihilator of the spherical module}
\label{ss:proof}

In this section, we compute the annihilator of the spherical module $\cS_{W_r,V}$. We follow the approach used by Rallis in \cite{Ral82}. We first recall the construction of unramified principal series.

We have the canonical isomorphism $M_r=(\Res_{E/F}\GL_1)^r$, under which we write an element of $M_r(F)$ as $a=(a_1,\dots,a_r)$ with $a_i\in E^\times$ its eigenvalue on $e_i$ for $1\leq i\leq r$. For every tuple $\sigma=(\sigma_1,\dots,\sigma_r)\in(\dC/\tfrac{\pi i}{\log q}\dZ)^r$, we define a character $\chi^\sigma_r$ of $M_r(F)$, hence of $P_r(F)$, by the formula
\[
\chi^\sigma_r(a)=\prod_{i=1}^r|a_i|_E^{\sigma_i+i-1/2}.
\]
We then have the normalized principal series
\[
\rI^\sigma_{W_r}\coloneqq\{\varphi\in C^\infty(G_r(F))\res\varphi(ag)=\chi^\sigma_r(a)\varphi(g)\text{ for $a\in P_r(F)$ and $g\in G_r(F)$}\},
\]
which is an admissible representation of $G_r(F)$ via the right translation.

Let $(V,(\;,\;)_V)$ be a hermitian space over $E$ of even dimension, with $d=d(V)$, $s=s(V)$, and $\epsilon=\epsilon(V)$. We fix a decomposition
\[
\Lambda_V=O_Ev_s\oplus\cdots\oplus O_Ev_1 \oplus \Lambda_V^0\oplus \fp_E^\fc v_{-1}\oplus\cdots\oplus \fp_E^\fc v_{-s},
\]
satisfying $(v_i,v_j)_V=\delta_{i,-j}$ for every $i,j$ and such that $v_i$ is perpendicular to $\Lambda^0_V$ for every $i$. Put $V^0\coloneqq\Lambda_V^0\otimes_{O_E}E$, which has dimension $1-(\epsilon 1)$ over $E$.

For $1\leq i\leq s$, let $Z_i$ and $Z^-_i$ be the $E$-subspaces of $V$ spanned by $\{v_s,\dots,v_i\}$ and $\{v_{-s},\dots,v_{-i}\}$, respectively. Then we have an increasing filtration
\begin{align}\label{eq:filtration}
\{0\}=Z_{s+1}\subseteq Z_s\subseteq \cdots \subseteq Z_1
\end{align}
of isotropic $E$-subspaces of $V$ in which $Z_i$ has dimension $s+1-i$. Let $Q_V$ be the (minimal) parabolic subgroup of $H_V$ that stabilizes \eqref{eq:filtration}. Let $M_V$ be the Levi factor of $Q_V$ stabilizing $V^0$ and the lines spanned by $v_i$ for every $i$. Then we have the canonical isomorphism
\[
M_V=\rU(V^0)\times(\Res_{E/F}\GL_1)^s,
\]
under which we write an element of $M_V(F)$ as $b=(b_0,b_1,\dots,b_s)$ with $b_0\in\rU(V^0)(F)$ and $b_i\in E^\times$ its eigenvalue on $v_i$ for $1\leq i\leq s$. For every tuple $\sigma=(\sigma_1,\dots,\sigma_s)\in(\dC/\tfrac{\pi i}{\log q}\dZ)^s$, we define a character $\chi^\sigma_V$ of $M_V(F)$, hence of $Q_V(F)$, by the formula
\begin{align}\label{eq:character}
\chi^\sigma_V(b)=\prod_{i=1}^{s}|b_i|_E^{\sigma_i+i-(\epsilon 1)/2}.
\end{align}

We then have the normalized principal series
\[
\rI^\sigma_V\coloneqq\{\varphi\in C^\infty(H_V(F))\res\varphi(bh)=\chi^\sigma_V(b)\varphi(h)\text{ for $b\in Q_V(F)$ and $h\in H_V(F)$}\},
\]
which is an admissible representation of $H_V(F)$ via the right translation.

\begin{notation}\label{no:pi_v}
We denote by $\pi_V^\sigma$ the unique irreducible constituent of $\rI^\sigma_V$ that has nonzero $L_V$-invariants.\footnote{Recall that the uniqueness is a consequence of the Iwasawa decomposition (see, for example, \cite{Car79}*{Section~3.5}).}
\end{notation}

Now we construct certain intertwining maps from $C^\infty_c(V^r)$ to principal series, which are first constructed by Rallis in the symplectic-orthogonal case \cite{Ral82}. Let $Q^r_V$ be the subgroup of $Q_V$ that preserves both $Z_{s+1-\min\{r,s\}}$ and $Z^-_{s+1-\min\{r,s\}}$, and acts trivially on $(Z_{s+1-\min\{r,s\}}\oplus Z^-_{s+1-\min\{r,s\}})^\perp$. Fix a left invariant Haar measure $\r{d}b$ on $Q^r_V(F)$. For every $\sigma=(\sigma_{s+1-\min\{r,s\}},\dots,\sigma_s)\in(\dC/\tfrac{\pi i}{\log q}\dZ)^{\min\{r,s\}}$ and $\phi\in C^\infty_c(V^r)$, we consider the integral
\[
Z_\sigma(\phi)\coloneqq\int_{Q^r_V(F)}\phi(bv_{s+1-\min\{r,s\}},\dots,bv_s,0,\dots,0)\chi^\sigma_V(b)\rd b,
\]
where $\chi^\sigma_V(b)$ is defined by the same formula \eqref{eq:character} as $b_1=\cdots=b_{s-\min\{r,s\}}=1$ for $b\in Q^r_V(F)$. Since $\phi$ is compactly supported, when $\RE \sigma_i\gg0$ for $s+1-\min\{r,s\}\leq i\leq s$, the integral $Z_\sigma(\phi)$ is absolutely convergent for all $\phi\in C^\infty_c(V^r)$. In this case, we obtain a $G_r(F)\times H_V(F)$-intertwining map
\begin{align}\label{eq:intertwining}
\cZ_\sigma\colon C^\infty_c(V^r)\to C^\infty(G_r(F)\times H_V(F))
\end{align}
that sends $\phi$ to the function $(g,h)\mapsto Z_\sigma(\omega_{W_r,V}(g,h)\phi)$ on $G_r(F)\times H_V(F)$. The following lemma generalizes \cite{Liu11}*{Lemma~A.3}.

\begin{lem}\label{le:principal}
For every $\sigma=(\sigma_{s+1-\min\{r,s\}},\dots,\sigma_s)\in(\dC/\tfrac{\pi i}{\log q}\dZ)^{\min\{r,s\}}$ with $\RE \sigma_i\gg0$ for $s+1-\min\{r,s\}\leq i\leq s$, the image of the intertwining map $\cZ_\sigma$ \eqref{eq:intertwining} is contained in the subspace $\rI^{\overleftarrow\sigma}_{W_r}\boxtimes\rI^{\overrightarrow\sigma}_V$ of $C^\infty(G_r(F)\times H_V(F))$, where
\begin{align*}
\begin{dcases}
\overleftarrow\sigma\coloneqq(-\sigma,-\tfrac{(\epsilon 1)}{2},-\tfrac{2+(\epsilon1)}{2},\dots,-\tfrac{2(r-\min\{r,s\}-1)+(\epsilon 1)}{2})\in(\dC/\tfrac{\pi i}{\log q}\dZ)^r,\\
\overrightarrow\sigma\coloneqq(-\tfrac{2-(\epsilon 1)}{2},-\tfrac{4-(\epsilon 1)}{2},\dots,-\tfrac{2(s-\min\{r,s\})-(\epsilon 1)}{2},\sigma)\in(\dC/\tfrac{\pi i}{\log q}\dZ)^s.
\end{dcases}
\end{align*}
\end{lem}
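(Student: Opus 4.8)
The plan is to follow Rallis \cite{Ral82} and the proof of \cite{Liu11}*{Lemma~A.3}. Since $\cZ_\sigma$ intertwines the action of $G_r(F)\times H_V(F)$ by $\omega_{W_r,V}$ on $C^\infty_c(V^r)$ with right translation on $C^\infty(G_r(F)\times H_V(F))$, it suffices to prove that for every $\phi\in C^\infty_c(V^r)$, every $p\in P_r(F)$ and every $q\in Q_V(F)$,
\[
Z_\sigma(\omega_{W_r,V}(p,q)\phi)=\chi^{\overleftarrow\sigma}_r(p)\,\chi^{\overrightarrow\sigma}_V(q)\,Z_\sigma(\phi),
\]
where $\chi^{\overleftarrow\sigma}_r$ and $\chi^{\overrightarrow\sigma}_V$ are extended to $P_r(F)$ and $Q_V(F)$ by declaring them trivial on the unipotent radicals; evaluating $\cZ_\sigma(\phi)$ at $(pg,qh)$ then yields exactly the two transformation laws defining $\rI^{\overleftarrow\sigma}_{W_r}\boxtimes\rI^{\overrightarrow\sigma}_V$. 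The hypothesis $\RE\sigma_i\gg0$ makes all integrals absolutely convergent, so this is a formal manipulation. As $P_r(F)$ is generated by the $m(a)$ with $a$ lower triangular together with $N_r^0(F)$, and $Q_V(F)=M_V(F)N_V(F)$ with $N_V$ the unipotent radical, it is enough to check the displayed identity on these pieces, using the explicit Schr\"odinger-model formulas of Section \ref{ss:weil}.

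Everything hinges on the map $b\mapsto\xi(b)\coloneqq(bv_{s+1-\min\{r,s\}},\dots,bv_s,0,\dots,0)$ from $Q^r_V(F)$ to $V^r$, through which $Z_\sigma(\phi)=\int_{Q^r_V(F)}\phi(\xi(b))\chi^\sigma_V(b)\rd b$ and which is left-equivariant, $h\cdot\xi(b)=\xi(hb)$ for $h\in Q^r_V(F)$; note also that $Q^r_V$, lying inside $Q_V$ and stabilizing $Z_{s+1-\min\{r,s\}}$, acts on the complete isotropic flag $Z_s\subseteq\cdots\subseteq Z_{s+1-\min\{r,s\}}$ as a Borel subgroup of $\GL_{\min\{r,s\}}$, with torus scaling the lines $Ev_j$. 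For $q=u\in N_V(F)$: since $Z_{s+1-\min\{r,s\}}$ lies in the flag defining $Q_V$, $u$ stabilizes it and acts on it through a unipotent element $\widehat u\in Q^r_V(F)$, whence $u^{-1}\xi(b)=\xi(\widehat u\,b)$ and left-invariance of $\rd b$ together with $\chi^\sigma_V(\widehat u)=1$ gives $Z_\sigma(\omega_{W_r,V}(u)\phi)=Z_\sigma(\phi)=\chi^{\overrightarrow\sigma}_V(u)Z_\sigma(\phi)$. For $q\in M_V(F)$: the $\rU(V^0)(F)$-factor and the $\GL_1$-factors with index $\leq s-\min\{r,s\}$ fix every $v_j$ entering $\xi(b)$, hence act trivially on $Z_\sigma$ — which is precisely the vanishing of the corresponding exponents of $\chi^{\overrightarrow\sigma}_V$ — whereas the remaining $\GL_1$-factors lie in the torus of $Q^r_V$ and, by left-equivariance of $\xi$ and left-invariance of $\rd b$, contribute the factor $\chi^\sigma_V$, which by construction coincides with $\chi^{\overrightarrow\sigma}_V$ on $Q^r_V(F)$. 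Finally, for $p$ in the unipotent radical of $P_r$: the Heisenberg cocycle evaluates to $\psi_F(\tr(bT(\xi(b'))))=1$ because the moment matrix $T(\xi(b'))$ vanishes (the nonzero coordinates of $\xi(b')$ span a totally isotropic subspace of $V$), while the lower-unipotent part of the $\GL_r$-Levi merely moves $\xi(b')$ within $Q^r_V(F)$ and is absorbed; this matches $\chi^{\overleftarrow\sigma}_r$ being trivial on the unipotent radical.

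The one place where a nontrivial normalization enters is the torus of $P_r$. For $p=m(a)$ with $a=\diag(a_1,\dots,a_r)$ one has $\omega_{W_r,V}(m(a))\phi(\xi(b'))=|\det a|_E^{d(V)}\phi(\xi(b')a)=|\det a|_E^{d(V)}\phi(\xi(b'\tau_a))$, where $\tau_a\in Q^r_V(F)$ is the torus element with eigenvalue $a_j$ on $v_{s+j-\min\{r,s\}}$; a change of variables — a right translation in $Q^r_V(F)$, hence carrying the modular character — gives $Z_\sigma(\omega_{W_r,V}(m(a))\phi)=|\det a|_E^{d(V)}\,\delta_{Q^r_V}(\tau_a)\,\chi^\sigma_V(\tau_a)^{-1}\,Z_\sigma(\phi)$, and one verifies directly, using $2d(V)-2s(V)=1-(\epsilon1)$ and the explicit form of $\delta_{Q^r_V}$ on its torus, that $|\det a|_E^{d(V)}\delta_{Q^r_V}(\tau_a)\chi^\sigma_V(\tau_a)^{-1}=\chi^{\overleftarrow\sigma}_r(a)$ — here the inversion on $\chi^\sigma_V$ is the source of the $-\sigma$ in $\overleftarrow\sigma$, and the ``$q$-power tail'' $-\tfrac{2(i-\min\{r,s\}-1)+(\epsilon1)}{2}$ for $i>\min\{r,s\}$ is exactly what is forced by the factor $|a_i|_E^{d(V)}$ from the $r-\min\{r,s\}$ zero slots of $\xi(b')$. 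I expect the proof to be entirely routine except for this last verification: the only genuine care required is in pinning down the modular character of $Q^r_V$ — equivalently, in matching the half-sum-of-roots shifts built into $\chi^\sigma_r$ and $\chi^\sigma_V$ — and in keeping the index ranges straight in the two regimes $r\leq s$ and $r>s$.
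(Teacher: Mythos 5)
Your proposal follows the same route as the paper's proof: unfold $Z_\sigma$ against the explicit Schr\"{o}dinger-model action, absorb the unipotent parts (using $T(\xi(b'))=0$ for the Heisenberg part and flag-preserving unipotents of $Q_V^r(F)$ for the rest), and convert the Levi tori of $P_r$ and $Q_V$ into right/left translations on $Q_V^r(F)$, with the diagonal torus of $P_r$ producing exactly the paper's combination of $|\det a|_E^{d}$, the Jacobian factor $\prod_{i=1}^{\min\{r,s\}}|a_i|_E^{2i-\min\{r,s\}-1}$, and $\chi_V^\sigma(\tau_a)^{-1}$, which together give $\chi_r^{\overleftarrow\sigma}(a)$. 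The only differences are organizational (the paper handles all of $Q_V(F)$ in one stroke via the quotient map $Q_V\to Q_V^r$, where you split into $M_V$ and $N_V$), plus the usual $\delta$ versus $\delta^{-1}$ convention in your final torus identity, which you correctly flag as the one point requiring care.
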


\begin{proof}
It amounts to showing that for every $\phi\in C^\infty_c(V^r)$, we have
\begin{align}\label{eq:intertwining2}
Z_\sigma(\omega_{W_r,V}(a)\phi)=\chi_r^{\overleftarrow\sigma}(a)Z_\sigma(\phi),\qquad\forall a\in P_r(F),
\end{align}
and
\begin{align}\label{eq:intertwining1}
Z_\sigma(\omega_{W_r,V}(b)\phi)=\chi_V^{\overrightarrow\sigma}(b)Z_\sigma(\phi),\qquad\forall b\in Q_V(F).
\end{align}

For \eqref{eq:intertwining2}, we may assume that $a$ is the diagonal matrix with components $(a_1,\dots,a_r)$ as $Z_\sigma(\omega_{W_r,V}(a)\phi)=Z_\sigma(\phi)$ if $a\in P_r(F)$ is unipotent. Then we have
\begin{align*}
Z_\sigma(\omega_{W_r,V}(a)\phi)
&=|a_1\cdots a_r|_E^d
\int_{Q^r_V(F)}\phi(bv_{s+1-\min\{r,s\}}a_1,\dots,bv_sa_{\min\{r,s\}},0,\dots,0)\chi^\sigma_V(b)\rd b \\
&=|a_1\cdots a_r|_E^{s+1/2-(\epsilon1)/2}\chi^\sigma_V(a_1,\dots,a_{\min\{r,s\}})^{-1}\prod_{i=1}^{\min\{r,s\}}|a_i|^{2i-\min\{r,s\}-1} \\
&\qquad\times\int_{Q^r_V(F)}\phi(bv_{s+1-\min\{r,s\}},\dots,bv_s,0,\dots,0)\chi^\sigma_V(b)\rd b \\
&=\prod_{i=1}^{\min\{r,s\}}|a_i|_E^{s+1/2-(\epsilon1)/2-(\sigma_{i+s-\min\{r,s\}}+i+s-\min\{r,s\}-(\epsilon1)/2)+2i-\min\{r,s\}-1} \\
&\qquad\times\prod_{i=\min\{r,s\}+1}^r|a_i|_E^{s+1/2-(\epsilon1)/2} \times Z_\sigma(\phi)\\
&=\chi_r^{\overleftarrow\sigma}(a)Z_\sigma(\phi).
\end{align*}

For \eqref{eq:intertwining1}, let $\tilde{b}\in Q_V^r(F)$ be the image of $b$ under the canonical quotient map $Q_V\to Q_V^r$. Then we have
\begin{align*}
Z_\sigma(\omega_{W_r,V}(b)\phi)
&=\int_{Q^r_V(F)}\phi(\tilde{b}^{-1}b'v_{s+1-\min\{r,s\}},\dots,\tilde{b}^{-1}b'v_s,0,\dots,0)\chi^\sigma_V(b')\rd b' \\
&=\chi^\sigma_V(\tilde{b})\int_{Q^r_V(F)}\phi(\tilde{b}^{-1}b'v_{s+1-\min\{r,s\}},\dots,\tilde{b}^{-1}b'v_s,0,\dots,0)
\chi^\sigma_V(\tilde{b}^{-1}b')\rd b' \\
&=\chi^\sigma_V(\tilde{b})\int_{Q^r_V(F)}\phi(b'v_{s+1-\min\{r,s\}},\dots,v_s,0,\dots,0)\chi^\sigma_V(b')\rd b' \\
&=\chi^{\overrightarrow\sigma}_V(b)\int_{Q^r_V(F)}\phi(b'v_{s+1-\min\{r,s\}},\dots,v_s,0,\dots,0)\chi^\sigma_V(b')\rd b' \\
&=\chi_V^{\overrightarrow\sigma}(b)Z_\sigma(\phi).
\end{align*}

The lemma follows.
\end{proof}

The following lemma generalizes \cite{Liu11}*{Lemma~A.4}.

\begin{lem}\label{le:vanishing}
Let $\phi$ be an element of $\Sph(V^r)$ such that $Z_\sigma(\phi)=0$ for every element $\sigma=(\sigma_{s+1-\min\{r,s\}},\dots,\sigma_s)\in(\dC/\tfrac{\pi i}{\log q}\dZ)^{\min\{r,s\}}$ with $\RE \sigma_i\gg0$ for $s+1-\min\{r,s\}\leq i\leq s$. Then $\phi=0$.
\end{lem}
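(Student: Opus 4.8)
\textbf{Proof proposal for Lemma \ref{le:vanishing}.}

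The plan is to reduce the vanishing statement to Proposition \ref{pr:vanishing}. Concretely, I would show that if $Z_\sigma(\phi)=0$ for all $\sigma$ in the stated range of real parts, then $\omega_{W_r,V}(g)\phi$ vanishes on $\Sigma_r(V)$ for every $g\in G_r(F)$; since $\phi\in\Sph(V^r)\subseteq C^\infty_c(V^r)^{I_r^0}$, Proposition \ref{pr:vanishing} then forces $\phi=0$.

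The first step is to unwind what the family of vanishing integrals $Z_\sigma(\phi)=0$ tells us. By the intertwining property (Lemma \ref{le:principal}), the map $\cZ_\sigma$ lands in $\rI^{\overleftarrow\sigma}_{W_r}\boxtimes\rI^{\overrightarrow\sigma}_V$, and $Z_\sigma(\phi)$ is exactly the value at $(1,1)$ of the image of $\phi$; more generally $Z_\sigma(\omega_{W_r,V}(g,h)\phi)$ is the value at $(g,h)$. So $Z_\sigma(\phi)=0$ for a fixed $\sigma$ means $\cZ_\sigma(\phi)$ vanishes identically only after translating, i.e.\ the hypothesis ``$Z_\sigma(\phi)=0$ for all $\sigma$'' has to be combined with the action of the group. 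The cleaner route: observe that the defining integral
\[
Z_\sigma(\phi)=\int_{Q^r_V(F)}\phi(bv_{s+1-\min\{r,s\}},\dots,bv_s,0,\dots,0)\chi^\sigma_V(b)\rd b
\]
is, as a function of $\sigma$ in the region of convergence, the Mellin--Laplace transform (in the variables $q^{-\sigma_i}$) of the restriction of $\phi$ to the orbit $\{(bv_{s+1-\min\{r,s\}},\dots,bv_s,0,\dots,0)\res b\in Q^r_V(F)\}$. The orbit map factors through $Q^r_V(F)$ modulo the stabilizer of the flag, and the point $(v_{s+1-\min\{r,s\}},\dots,v_s,0,\dots,0)$ lies in $\Sigma_r(V)$ since the $v_i$ appearing are mutually isotropic and the trailing entries are $0$. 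Vanishing of this transform for $\RE\sigma_i\gg0$, hence on an open subset of $\sigma$-space and then (by holomorphy/uniqueness of Mellin transforms on the relevant torus quotient, using that $\phi$ is locally constant with compact support so the integrand is a finite sum of characters times indicator functions of cosets) identically, forces $\phi$ to vanish on that entire orbit.

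The second step is to propagate this to all of $\Sigma_r(V)$. Replacing $\phi$ by $\omega_{W_r,V}(g,h)\phi$ and noting that the hypothesis is stable under this replacement (since $\cZ_\sigma$ is $G_r(F)\times H_V(F)$-intertwining, $Z_\sigma(\omega_{W_r,V}(g,h)\phi)$ is just another value of the same function, still zero for all $g,h$ once $Z_\sigma(\phi\text{-translates})=0$), we get that $\omega_{W_r,V}(g,h)\phi$ vanishes on the chosen orbit for all $(g,h)$. Now I would use that $H_V(F)$ acts on $\Sigma_r(V)$ (through the diagonal action on $V^r$) and that the $H_V(F)$-orbit, together with the $G_r(F)$-action through $m(a)$-scalings in the $\GL_r(E)$-directions, sweeps out a dense -- in fact all -- of $\Sigma_r(V)$: any $x\in\Sigma_r(V)$ spans an isotropic subspace of $V$ of dimension $\le\min\{r,s\}$, and by Witt's theorem some $h\in H_V(F)$ moves a spanning subset of $x$ into the span of $v_s,\dots,v_{s+1-\min\{r,s\}}$, after which a matrix $a\in\GL_r(E)$ (acting via $m(a)$, i.e.\ $x\mapsto xa$) adjusts coordinates to hit $(bv_{s+1-\min\{r,s\}},\dots,bv_s,0,\dots,0)$ for suitable $b$. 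Hence $\phi(x)=0$ for every $x\in\Sigma_r(V)$; applying the same to every translate gives the hypothesis of Proposition \ref{pr:vanishing}, and we conclude $\phi=0$.

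I expect the main obstacle to be the precise bookkeeping in Step 1: extracting genuine pointwise vanishing of $\phi$ on the orbit from vanishing of the integral $Z_\sigma$ only in the range $\RE\sigma_i\gg0$. One must argue that $\sigma\mapsto Z_\sigma(\phi)$ extends to an entire (or rational in $q^{-\sigma_i}$) function, that its vanishing on an open cone implies identical vanishing, and then invert a finite Mellin transform over the quotient of $Q^r_V(F)$ by the stabilizer -- all of which is routine for locally constant compactly supported $\phi$ but requires care to identify the correct measure-theoretic setup (this is exactly the content generalizing \cite{Liu11}*{Lemma~A.4}, so I would follow that argument closely, inserting the sign $\epsilon$ where needed). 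The surjectivity-of-orbits claim in Step 2 is standard Witt theory and should present no real difficulty.
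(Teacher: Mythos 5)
There is a genuine gap, and it sits exactly where you flagged the least concern: the translation step. The hypothesis of the lemma is only that $Z_\sigma(\phi)=\cZ_\sigma(\phi)(1,1)=0$, i.e.\ the image of $\phi$ under the intertwining map vanishes \emph{at the identity}. Your Step~2 asserts that ``the hypothesis is stable under replacing $\phi$ by $\omega_{W_r,V}(g,h)\phi$''; but $Z_\sigma(\omega_{W_r,V}(g,h)\phi)=\cZ_\sigma(\phi)(g,h)$ is just another value of a (generally nonzero) function in $\rI^{\overleftarrow\sigma}_{W_r}\boxtimes\rI^{\overrightarrow\sigma}_V$, and a function in an induced representation can perfectly well vanish at $(1,1)$ without vanishing elsewhere. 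Your parenthetical ``still zero for all $g,h$ once $Z_\sigma(\phi\text{-translates})=0$'' assumes the conclusion. Since both your Step~3 (sweeping out $\Sigma_r(V)$ by translates) and the final appeal to Proposition \ref{pr:vanishing} (which needs $\omega_{W_r,V}(g)\phi$ to vanish on $\Sigma_r(V)$ for \emph{every} $g$) rest on this, the argument does not close. A secondary, more repairable issue is in Step~1: vanishing of $Z_\sigma(\phi)$ for all unramified $\sigma$ only kills the integrals of $\phi$ over the valuation shells of $Q^r_V(F)$, not the pointwise values on the orbit; to get pointwise vanishing you must feed in the invariance of $\phi$ under $m(\GL_r(O_E))\subseteq I_r^0$, which is what the citation of \cite{Ral82}*{Lemma~5.2} accomplishes and which in fact yields vanishing on all of $Z_{s+1-\min\{r,s\}}^{\min\{r,s\}}\times\{0\}^{r-\min\{r,s\}}$, not merely on the open orbit.

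The paper bridges the gap you left open in three moves, none of which appear in your proposal. First, because $\psi_F(\tr bT(x))=1$ on $\Sigma_r(V)$, the unipotent radical $N_r^0(F)$ acts trivially there, so once one knows $\phi$ vanishes on the full set $Z_{s+1-\min\{r,s\}}^{\min\{r,s\}}\times\{0\}^{r-\min\{r,s\}}$, transitivity of $M_r^0(F)\times H_V(F)$ on the dense stratum $\Sigma_r^\circ(V)$ plus the Iwasawa decomposition gives vanishing of $\omega_{W_r,V}(a,h)\phi$ on $\Sigma_r(V)$ for $(a,h)\in P_r^0(F)\times H_V(F)$ \emph{without} needing the hypothesis for translates. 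Second --- and this is the key idea missing from your write-up --- the Weyl elements $w\in\{\pm1\}^r$ are handled using the fact that $\phi$ is a $\kappa_r^\epsilon$-eigenvector: $\omega_{W_r,V}(\CF_{I_rwI_r})\phi$ is a scalar multiple of $\phi$, while on $\Sigma_r(V)$ the double-coset sum collapses so that $\omega_{W_r,V}(\CF_{I_rwI_r})\phi$ agrees there with $q^{\ell(w)}\omega_{W_r,V}(w)\phi$; hence $\omega_{W_r,V}(w)\phi$ also vanishes on $\Sigma_r(V)$. Third, $G_r(F)=\bigcup_{w\in\{\pm1\}^r}P_r^0(F)wI_r^0$ then yields the hypothesis of Proposition \ref{pr:vanishing}. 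Your proposal never invokes the eigenvector property of $\phi\in\Sph(V^r)$ beyond bare $I_r^0$-invariance, which is a reliable sign that something essential is missing: without it there is no way to move past the Siegel parabolic.
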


\begin{proof}
By the definition of $\kappa_r^\epsilon$, we have $\Sph(V^r)\subseteq C^\infty_c(V^r)^{I_r^0\times L_V}$. As $M_r^0(F)\cap I_r^0=m(\GL_r(O_E))$ under the isomorphism $m\colon\Res_{E/F}\GL_r\xrightarrow{\sim}M_r^0$ in \eqref{eq:levi}, we have
\begin{align}\label{eq:vanishing1}
\phi\res_{Z_{s+1-\min\{r,s\}}^{\min\{r,s\}}\times\{0\}^{s-\min\{r,s\}}}=0
\end{align}
by \cite{Ral82}*{Lemma~5.2} (applied to $M_{\min\{r,s\},\min\{r,s\}}(E)$).

Recall the subset $\Sigma_r(V)\coloneqq\{x\in V^r\res T(x)=0\}$ of $V^r$ from Section \ref{ss:weil}. We claim that
\begin{itemize}
  \item [(a)] $\omega_{W_r,V}(a,h)\phi$ vanishes on $\Sigma_r(V)$ for every $(a,h)\in P_r^0(F)\times H_V(F)$.
\end{itemize}
Let $\Sigma_r^\circ(V)\subseteq\Sigma_r(V)$ be the subset of those $x$ that generates a subspace of $V$ over $E$ of dimension $\min\{r,s\}$ -- the largest possible dimension. As $\Sigma_r^\circ(V)$ is dense in $\Sigma_r(V)$, it suffices to show that $\omega_{W_r,V}(a,h)\phi$ vanishes on $\Sigma_r^\circ(V)$ for every $(a,h)\in M_r^0(F)\times H_V(F)$. By the same argument of \cite{Ral82}*{Lemma~3.1}, $\Sigma_r^\circ(V)$ is transitive under the action of $M_r^0(F)\times H_V(F)$. Thus, it suffices to show that
\begin{align}\label{eq:vanishing3}
\omega_{W_r,V}(a,h)\phi(v_{s+1-\min\{r,s\}},\dots,v_s,0,\dots,0)=0
\end{align}
for every $(a,h)\in M_r^0(F)\times H_V(F)$. Since $\phi$ is invariant under $(M_r^0(F)\cap I_r^0)\times L_V$, by the Iwasawa decomposition, it suffices to consider the case where $a$ is lower-triangular and $h\in Q_V(F)$. Then we have
\[
\omega_{W_r,V}(a,h)\phi(v_{s+1-\min\{r,s\}},\dots,v_s,0,\dots,0)=\phi(x)
\]
for some $x\in Z_{s+1-\min\{r,s\}}^{\min\{r,s\}}\times\{0\}^{s-\min\{r,s\}}$, which is zero by \eqref{eq:vanishing1}. We obtain \eqref{eq:vanishing3}, hence the claim (a).

Now for every $w\in\{\pm 1\}^r\subseteq\fW_r$, there exists an integer $\ell(w)\geq 0$ such that $\omega_{W_r,V}(\CF_{I_rwI_r})\phi$ and $q^{\ell(w)}\omega_{W_r,V}(w)\phi$ coincides on $\Sigma_r(V)$. As $\omega_{W_r,V}(\CF_{I_rwI_r})\phi$ is a multiple of $\phi$, we have that $\omega_{W_r,V}(w)\phi$ vanishes on $\Sigma_r(V)$ by the claim (a). As $G_r(F)=\bigcup_{w\in\{\pm 1\}^r}P_r^0(F)w I_r^0$, we know that $\omega_{W_r,V}(g)\phi$ vanishes on $\Sigma_r(V)$ for every $g\in G_r(F)$. The lemma follows from Proposition \ref{pr:vanishing}.
\end{proof}

\begin{proposition}\label{pr:annihilator}
The annihilator of the $\cH_{W_r}^\epsilon\otimes\cH_V$-module $\cS_{W_r,V}$ is $\cI_{W_r,V}$.
\end{proposition}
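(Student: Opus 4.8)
The plan is to show the two inclusions $\cI_{W_r,V}\subseteq\Ann(\cS_{W_r,V})$ and $\Ann(\cS_{W_r,V})\subseteq\cI_{W_r,V}$ separately, using the intertwining maps $\cZ_\sigma$ as the bridge between the module structure and the Satake-side parametrization encoded in $\overleftarrow\Theta\otimes\overrightarrow\Theta$. First I would establish $\cI_{W_r,V}\subseteq\Ann(\cS_{W_r,V})$. Fix $\phi\in\Sph(V^r)$. For $\sigma$ with $\RE\sigma_i\gg 0$, Lemma \ref{le:principal} says $\cZ_\sigma(\phi)$ lands in $\rI^{\overleftarrow\sigma}_{W_r}\boxtimes\rI^{\overrightarrow\sigma}_V$. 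Since $\phi$ is an eigenvector for $\dC[I_r\backslash K_r/I_r]$ with character $\kappa_r^\epsilon$ and is $L_V$-fixed, $\cZ_\sigma(\phi)$ is a $K_r\times L_V$-finite vector whose $G_r(F)$-part is, up to the eigencharacter, the spherical vector in $\rI^{\overleftarrow\sigma}_{W_r}$ (after applying $\beta_r^+\circ(\beta_r^\epsilon)^{-1}$, which is exactly the bookkeeping built into the definition of $\overleftarrow\Theta$) and whose $H_V(F)$-part is the $L_V$-spherical vector in $\rI^{\overrightarrow\sigma}_V$. Hence for $f\otimes f'\in\cH^\epsilon_{W_r}\otimes\cH_V$, we get $\cZ_\sigma(\omega_{W_r,V}(f\otimes f')\phi)=(\overleftarrow\Theta f)(\overleftarrow\sigma)\cdot(\overrightarrow\Theta f')(\overrightarrow\sigma)\cdot\cZ_\sigma(\phi)$, where the scalars are the Satake/Hecke eigenvalues — matching Definition \ref{de:phi} because the substitution of $q$-powers there is precisely the specialization of $\overrightarrow\sigma$ (resp.\ $\overleftarrow\sigma$) at the ``forced'' coordinates $-\tfrac{2-(\epsilon1)}{2},\dots$ (resp.\ $-\tfrac{(\epsilon1)}{2},\dots$). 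Therefore if $f\otimes f'\in\cI_{W_r,V}=\ker(\overleftarrow\Theta\otimes\overrightarrow\Theta)$, then $\cZ_\sigma(\omega_{W_r,V}(f\otimes f')\phi)=0$ for all such $\sigma$, and Lemma \ref{le:vanishing} forces $\omega_{W_r,V}(f\otimes f')\phi=0$. Since $\phi$ was arbitrary, $\cI_{W_r,V}$ annihilates $\cS_{W_r,V}$.

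For the reverse inclusion $\Ann(\cS_{W_r,V})\subseteq\cI_{W_r,V}$, I would produce enough ``spectral support'' to conclude that $\cS_{W_r,V}$ is a faithful module over $(\cH^\epsilon_{W_r}\otimes\cH_V)/\cI_{W_r,V}$. By Remark \ref{re:phi}, the quotient algebra is isomorphic to $\cT_{\min\{r,s\}}$ via $\overleftarrow\Theta\otimes\overrightarrow\Theta$, so it suffices to show that for a Zariski-dense set of parameters $\sigma\in(\dC/\tfrac{\pi i}{\log q}\dZ)^{\min\{r,s\}}$ there is an element $\phi_\sigma\in\Sph(V^r)$ (e.g.\ a suitable translate/projection of $\CF_{\Lambda_V^r}$, which lies in $\Sph(V^r)$ by Lemma \ref{le:generator}) with $\cZ_\sigma(\phi_\sigma)\neq 0$: then an element $f\otimes f'$ in the annihilator kills $\cZ_\sigma(\omega_{W_r,V}(f\otimes f')\phi_\sigma)=(\overleftarrow\Theta f\cdot\overrightarrow\Theta f')(\sigma)\cZ_\sigma(\phi_\sigma)$, forcing the function $\overleftarrow\Theta f\cdot\overrightarrow\Theta f'\in\cT_{\min\{r,s\}}$ to vanish on a dense set, hence identically, i.e.\ $f\otimes f'\in\cI_{W_r,V}$. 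The nonvanishing of $\cZ_\sigma(\CF_{\Lambda_V^r})$ for generic $\sigma$ is a direct computation: the integral over $Q^r_V(F)$ of $\CF_{\Lambda_V^r}$ evaluated along the orbit of $(v_{s+1-\min\{r,s\}},\dots,v_s,0,\dots,0)$ against $\chi^\sigma_V$ is, after unfolding over $Q^r_V(F)\cap L_V\backslash Q^r_V(F)$ using the decomposition of $\Lambda_V$, essentially a product of geometric series in $q^{\pm\sigma_i}$ times a nonzero volume, and in particular is a nonzero rational function of the $q^{\sigma_i}$.

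Alternatively — and this is probably the cleaner route, avoiding the explicit orbit integral — one can invoke Assumption \ref{as:invariant} (valid for $q$ odd by Proposition \ref{pr:invariant}, but I should check whether only the $\epsilon=+$ case or a softer genericity statement is actually needed here) together with the fact that $C^\infty_c(V^r)^{I_r}[\kappa_r^\epsilon]$, being cyclic over $H_V(F)$ and finitely generated over the commutative algebra $\cH^\epsilon_{W_r}\otimes\cH_V$, has support equal to all of $\Spec$ of the image algebra; a standard argument with the Jacquet-module/Rallis filtration then shows that the associated variety of $\cS_{W_r,V}$ is exactly $V(\cI_{W_r,V})$. The main obstacle I anticipate is the first inclusion, specifically verifying that the Hecke eigenvalue of $\cZ_\sigma(\phi)$ is correctly described by the explicit $q$-power substitutions in Definition \ref{de:phi}: one must carefully track (i) the shift between the ``geometric'' parameter $\sigma$ appearing in $\cZ_\sigma$ and the parameters $\overleftarrow\sigma,\overrightarrow\sigma$ of the target principal series as in Lemma \ref{le:principal}, (ii) the twist by $\beta_r^+\circ(\beta_r^\epsilon)^{-1}$ converting an eigenvector for $\kappa_r^\epsilon$ into a genuinely $K_r$-spherical vector whose Satake parameter can be read off, and (iii) the half-integral normalization constants tied to the sign $\epsilon$ and to $d(V),s(V)$. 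Once the dictionary between $\{\overleftarrow\Theta,\overrightarrow\Theta\}$ and the specialization of $\{\cZ_\sigma\}$ is pinned down, both inclusions follow from Lemma \ref{le:vanishing} and Remark \ref{re:phi} with little extra work.
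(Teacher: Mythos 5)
Your proposal follows essentially the same route as the paper: the inclusion of $\cI_{W_r,V}$ into the annihilator via Definition \ref{de:phi}, Lemma \ref{le:principal} and Lemma \ref{le:vanishing}, and the reverse inclusion via the smoothness of $(\cH_{W_r}^\epsilon\otimes\cH_V)/\cI_{W_r,V}$ (Remark \ref{re:phi}) combined with the nonvanishing of $Z_\sigma(\CF_{\Lambda_V^r})$ for a dense set of $\sigma$ in the range of convergence, which the paper likewise asserts without carrying out the explicit unfolding you sketch. The alternative route you mention through Assumption \ref{as:invariant} is not needed here and is not what the paper uses for this proposition.
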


\begin{proof}
It suffices to consider the $\cH_{W_r}^\epsilon\otimes\cH_V$-module $\Sph(V^r)$. Let $\cI'_{W_r,V}\subseteq\cH_{W_r}^\epsilon\otimes\cH_V$ be the annihilator of $\Sph(V^r)$. By Definition \ref{de:phi}, Lemma \ref{le:principal}, and Lemma \ref{le:vanishing}, we have $\cI_{W_r,V}\subseteq\cI'_{W_r,V}$. In particular, we may regard $\Sph(V^r)$ as a sheaf over $\Spec(\cH_{W_r}^\epsilon\otimes\cH_V)/\cI_{W_r,V}$. Since $\Spec(\cH_{W_r}^\epsilon\otimes\cH_V)/\cI_{W_r,V}$ is smooth by Remark \ref{re:phi}, if $\cI_{W_r,V}\neq\cI'_{W_r,V}$, then the support of $\Sph(V^r)$ is contained in a proper Zariski closed subset. In other words, for every $\phi\in\Sph(V^r)$, $\cZ_\sigma(\phi)$ vanishes outside a subset of measure zero of those $\sigma=(\sigma_{s+1-\min\{r,s\}},\dots,\sigma_s)\in(\dC/\tfrac{\pi i}{\log q}\dZ)^{\min\{r,s\}}$ with $\RE \sigma_i\gg0$ for $s+1-\min\{r,s\}\leq i\leq s$. However, this is already not the case for $Z_\sigma(\CF_{\Lambda_V^r})$, where $\CF_{\Lambda_V^r}\in\Sph(V^r)$ by Lemma \ref{le:generator}. Thus, we have $\cI_{W_r,V}=\cI'_{W_r,V}$; and the proposition follows.
\end{proof}

Using the map $\cZ_\sigma$ \eqref{eq:intertwining}, we can construct a nontrivial $G_r(F)\times H_V(F)$-intertwining map
from $C^\infty_c(V^r)$ to $\rI^{\overleftarrow\sigma}_{W_r}\boxtimes\rI^{\overrightarrow\sigma}_V$ for every $\sigma\in(\dC/\tfrac{\pi i}{\log q}\dZ)^{\min\{r,s\}}$. However, it is not clear what the maximal semisimple quotient of the image is. To compute the theta lifting of the particular constituent of $\rI^{\overleftarrow\sigma}_{W_r}$ we are interested in, we need to analyze the behavior of the function $\CF_{\Lambda_V^r}$ via doubling zeta integral, which we will do in the next section.

\section{Doubling zeta integrals and doubling L-factors}
\label{ss:doubling}

In this section, we compute certain doubling zeta integrals and doubling $L$-factors for almost unramified representations of $G_r(F)$ (with respect to $K_r$). We will denote by $s$ a general complex variable rather than the Witt index. In fact, the hermitian space $V$ will not appear in this section. We fix the Haar measure on $G_r(F)$ under which $K_r$ has volume $1$.

We briefly review the general theory of doubling zeta integrals \cites{GPR,HKS96,Yam14} for unitary groups. We will follow the reference \cite{Yam14} as we need to use several results from there later, but with slightly modified notation. We apply the setup in \cite{Yam14}*{Section~2} to our skew-hermitian space $W_r$; in particular, we are in the case $(\rI_2)$. We have the doubling skew-hermitian space $W_r^\Box\coloneqq W_r\oplus\bar{W}_r$, where $\bar{W}_r=E^{2r}$ but with the skew-hermitian form given by the matrix $-\(\begin{smallmatrix}&1_r\\ -1_r &\end{smallmatrix}\)$. Similar to $W_r$, we have a natural basis $\{\bar{e}_1,\dots,\bar{e}_{2r}\}$ and a polarization $\bar{W}_r=\bar{X}_r\oplus\bar{Y}_r$. Let $G_r^\Box$ be the unitary group of $W_r^\Box$, which contains $G_r\times G_r$ as a subgroup after we identify $G_r$ with the unitary group of $\bar{W}_r$ in the natural way.

We now take a basis $\{e^\Box_1,\dots,e^\Box_{4r}\}$ of $W_r^\Box$ by the formula
\[
e^\Box_i=e_i,\quad e^\Box_{r+i}=-\bar{e}_i,\quad e^\Box_{2r+i}=e_{r+i}, \quad e^\Box_{3r+i}=\bar{e}_{r+i}
\]
for $1\leq i\leq r$, under which we may identify $W_r^\Box$ with $W_{2r}$ and $G_r^\Box$ with $G_{2r}$. Then we put $P_r^\Box\coloneqq P_{2r}^0$, $N_r^\Box\coloneqq N_{2r}^0$, $K_r^\Box\coloneqq K_{2r}$, $I_r^\Box\coloneqq I_{2r}^0$, and $\fW_r^\Box\coloneqq\fW_{2r}$. We denote by $\Delta\colon P_r^\Box\to\Res_{E/F}\GL_1$ the composition of the Levi quotient map $P_r^\Box=P_{2r}^0\to M_{2r}^0$, the isomorphism $m^{-1}\colon M_{2r}^0\to\Res_{E/F}\GL_{2r}$, and the determinant $\Res_{E/F}\GL_{2r}\to\Res_{E/F}\GL_1$. Put
\[
\bw_r\coloneqq
\begin{pmatrix}
     &  & 1_r &  \\
     & 1_r &  &  \\
    -1_r & 1_r &  &  \\
     &  & 1_r & 1_r \\
\end{pmatrix}
\in G_r^\Box(F).
\]
Then $P_r^\Box\bw_r(G_r\times G_r)$ is Zariski open in $G_r^\Box$.

For $s\in\dC$, the degenerate principal series of $G_r^\Box(F)$ is defined as the normalized induced representation
\[
\rI^\Box_r(s)\coloneqq\Ind_{P_r^\Box}^{G_r^\Box}(|\;|_E^s\circ\Delta)
\]
of $G_r^\Box(F)$. We have the Bruhat decomposition $K_r^\Box=\coprod_{i=0}^{2r}\cB_i^\Box$ into bi-$I_r^\Box$-invariant subsets such that $\cB_i^\Box\prec\cB_j^\Box$ in the Bruhat order for $0\leq i<j\leq 2r$. For $\epsilon=\pm$, let $\ff_{r,\epsilon}^{(s)}$ be the unique section of $\rI^\Box_r(s)$ such that for every $g\in p\cB_i^\Box$ with $p\in P_r^\Box(F)$,
\[
\ff_{r,\epsilon}^{(s)}(g)=(-q)^{\frac{((\epsilon1)-1)i}{2}}\cdot|\Delta(p)|_E^{s+r}.
\]
It is a holomorphic standard, hence good, section.

\begin{remark}\label{re:basis}
Since we have identified $W_r^\Box$ with $W_{2r}$ via the basis $\{e^\Box_1,\dots,e^\Box_{4r}\}$, for every element $\sigma^\Box\in(\dC/\tfrac{\pi i}{\log q}\dZ)^{2r}$, we have the unramified principal series $\rI^{\sigma^\Box}_{W_{2r}}$ of $G_r^\Box(F)$ from Section \ref{ss:proof}. Let $\varphi^{\sigma^\Box}_\epsilon$ be the unique section in $(\rI^{\sigma^\Box}_{W_{2r}})^{I_{2r}}[\kappa_{2r}^\epsilon]$ satisfying $\varphi^{\sigma^\Box}_\epsilon(1_{4r})=1$. By definition, we have $\rI^\Box_r(s)\subseteq\rI^{\sigma^\Box_s}_{W_{2r}}$, where
\[
\sigma^\Box_s\coloneqq(s+r-\tfrac{1}{2},s+r-\tfrac{3}{2},\dots,s-r+\tfrac{3}{2},s-r+\tfrac{1}{2})\in(\dC/\tfrac{\pi i}{\log q}\dZ)^{2r}.
\]
Moreover, by Lemma \ref{le:eigenvector} and Remark \ref{re:eigenvector}, we have $\ff_{r,\epsilon}^{(s)}=\varphi^{\sigma^\Box_s}_\epsilon$. Here, we suppressed $r$ from the notation $\sigma^\Box_s$ as it will be clear at the time we invoke this remark.
\end{remark}

Let $\pi$ be an irreducible admissible representation of $G_r(F)$. For every element $\xi\in\pi^\vee\boxtimes\pi$, we denote by $H_\xi\in C^\infty(G_r(F))$ its associated matrix coefficient. Then for every meromorphic section $f^{(s)}$ of $\rI^\Box_r(s)$, we have the (doubling) zeta integral:
\[
Z(\xi,f^{(s)})\coloneqq\int_{G_r(F)}H_\xi(g)f^{(s)}(\bw_r(g,1_{2r}))\rd g,
\]
which is absolutely convergent for $\RE s$ large enough and has a meromorphic continuation. We let $L(s,\pi)$ and $\varepsilon(s,\pi,\psi_F)$ be the doubling $L$-factor and the doubling epsilon factor of $\pi$, respectively, defined in \cite{Yam14}*{Theorem~5.2}.

\begin{remark}
In \cite{Yam14}, the author considers the degenerate principal series with respect to another parabolic subgroup $P_r^\triangle$, namely, the one stabilizing the diagonal of $W_r^\Box$. Since we have $\bw_r P_r^\triangle \bw_r^{-1}=P_r^\Box$, the assignment that sends $f^{(s)}$ to the function $g\mapsto f^{(s)}(\bw_rg)$ gives rise to an intertwining isomorphism from our degenerate principal series to the one in \cite{Yam14}.
\end{remark}

\begin{definition}\label{de:representation}
Let $\pi$ be an irreducible admissible representation of $G_r(F)$. We say that
\begin{enumerate}
  \item $\pi$ is \emph{unramified} (with respect to $K_r$) if $\pi^{I_r}[\kappa_r^+]\neq\{0\}$; and

  \item $\pi$ is \emph{almost unramified} (with respect to $K_r$) if $\pi^{I_r}[\kappa_r^-]\neq\{0\}$ and that the Satake parameter of $\pi$ contains either $q$ or $q^{-1}$.
\end{enumerate}
\end{definition}

\begin{remark}
Note that $\pi^{I_r}[\kappa_r^+]$ is simply $\pi^{K_r}$ by Lemma \ref{le:eigenvector}. In general, if $\pi^{I_r}\neq\{0\}$, then there exists an element $\sigma\in(\dC/\tfrac{\pi i}{\log q}\dZ)^r$, unique up to the natural action of $\fW_r$, such that $\pi$ is a constituent of $\rI^\sigma_{W_r}$. Then the Satake parameter of $\pi$ contains either $q$ or $q^{-1}$ if and only if the corresponding $\sigma$ contains either $\tfrac{1}{2}$ or $-\tfrac{1}{2}$.
\end{remark}

We now study certain zeta integrals of unramified and almost unramified representations.

\begin{notation}\label{no:pi_w}
For $\sigma=(\sigma_1,\dots,\sigma_r)\in(\dC/\tfrac{\pi i}{\log q}\dZ)^r$, we denote by $\pi^\sigma_{W_r,\epsilon}$ the unique irreducible constituent of $\rI^\sigma_{W_r}$ satisfying $(\pi^\sigma_{W_r,\epsilon})^{I_r}[\kappa_r^\epsilon]\neq\{0\}$.
\end{notation}

For example, $\pi^{(1/2)}_{W_1,+}$ and $\pi^{(1/2)}_{W_1,-}$ are simply the trivial character and the Steinberg representation of $G_1(F)$, respectively. We have that $(\pi^\sigma_{W_r,\epsilon})^\vee$ is isomorphic to $\pi^\sigma_{W_r,\epsilon}$, so that $((\pi^\sigma_{W_r,\epsilon})^\vee)^{I_r}[\kappa_r^\epsilon]\boxtimes(\pi^\sigma_{W_r,\epsilon})^{I_r}[\kappa_r^\epsilon]$ is one dimensional. We let $\xi^\sigma_\epsilon$ be a generator of this one dimensional space; it satisfies $H_{\xi^\sigma_\epsilon}(1_{2r})\neq 0$. We normalize $\xi^\sigma_\epsilon$ so that $H_{\xi^\sigma_\epsilon}(1_{2r})=1$, which makes it unique. For future use, we introduce the following $L$-factors:
\begin{align}\label{eq:factor}
\begin{dcases}
a_{2r}(s)\coloneqq\prod_{i=1}^{2r}\frac{1}{1-(-1)^iq^{-2s+i-1}},\\
b_{2r}(s)\coloneqq\prod_{i=1}^{2r}\frac{1}{1-(-1)^iq^{-2s-i}},\\
L^\sigma_+(s)\coloneqq\prod_{i=1}^r\frac{1}{(1-q^{2\sigma_i-2s})(1-q^{-2\sigma_i-2s})},\\
L^\sigma_-(s)\coloneqq(1-q^{1-2s})\prod_{i=1}^r\frac{1}{(1-q^{2\sigma_i-2s})(1-q^{-2\sigma_i-2s})},\\
c^r_+(s)\coloneqq 1,\\
c^r_-(s)\coloneqq\frac{(1+q)}{(-q)^rq(1+q^{2r-1})(1-q^{-2s-2r})}.
\end{dcases}
\end{align}

\begin{proposition}\label{pr:zeta}
For $\sigma\in(\dC/\tfrac{\pi i}{\log q}\dZ)^r$, we have
\[
Z(\xi^\sigma_\epsilon,\ff_{r,\epsilon}^{(s)})=c^r_\epsilon(s)\cdot\frac{L^\sigma_\epsilon(s+\tfrac{1}{2})}{b_{2r}(s)}.
\]
\end{proposition}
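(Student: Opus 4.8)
The computation of the doubling zeta integral $Z(\xi^\sigma_\epsilon,\ff_{r,\epsilon}^{(s)})$ is a local integral over $G_r(F)$ that we will unfold using the Iwasawa decomposition relative to the Siegel parabolic. First I would record the standard reduction: since $\ff_{r,\epsilon}^{(s)}$ is right-$I_r^\Box$-invariant (being the eigensection $\varphi_\epsilon^{\sigma_s^\Box}$ of Remark \ref{re:basis}) and the matrix coefficient $H_{\xi^\sigma_\epsilon}$ is bi-$I_r^0$-invariant after averaging, the integral collapses to a finite sum indexed by the Bruhat cells $K_r=\coprod_{w\in\fW_r}I_r^0 w I_r^0$, or more economically by the double cosets that actually contribute to $\bw_r(g,1_{2r})$ landing in the open cell $P_r^\Box\bw_r(G_r\times G_r)$. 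Here the key geometric input is the description of when $\bw_r(g,1)$ lies in $P_r^\Box\bw_r(G_r\times G_r)$ and the computation of $\Delta(\bw_r(g,1)\bw_r^{-1})$ in terms of the Cartan/Iwasawa coordinates of $g$; this is exactly the kind of matrix bookkeeping done in \cite{Yam14}*{Section~2} and \cite{Li92}, which I would cite and adapt to our choice of Borel and basis $\{e_i^\Box\}$.

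\textbf{Key steps.} The plan is: (1) Write $Z(\xi^\sigma_\epsilon,\ff_{r,\epsilon}^{(s)}) = \int_{G_r(F)} H_{\xi_\epsilon^\sigma}(g)\,\ff_{r,\epsilon}^{(s)}(\bw_r(g,1_{2r}))\,\rd g$ and, using that the integrand is left-and-right $I_r^0$-invariant in $g$, rewrite it as $\sum_{w\in\fW_r}\vol(I_r^0 w I_r^0)\,H_{\xi_\epsilon^\sigma}(w)\,\ff_{r,\epsilon}^{(s)}(\bw_r(w,1_{2r}))$ — but since this is not convergent term-by-term, better to integrate over the Siegel parabolic $P_r^0(F)\backslash G_r(F)\simeq$ a lattice-type space and perform a genuine $p$-adic integral. (2) Compute $\ff_{r,\epsilon}^{(s)}(\bw_r(g,1))$: decompose $\bw_r(g,1) = p(g)\cdot k(g)$ with $p(g)\in P_r^\Box(F)$, $k(g)\in K_r^\Box$, and evaluate $|\Delta(p(g))|_E^{s+r}$ and the Bruhat-cell index $i(k(g))$ so that $\ff_{r,\epsilon}^{(s)}(\bw_r(g,1)) = (-q)^{((\epsilon1)-1)i(k(g))/2}|\Delta(p(g))|_E^{s+r}$; for $\epsilon=+$ this recovers the classical Gindikin--Karpelevich-type computation and for $\epsilon=-$ the sign factor $(-q)^{-i(k(g))}$ is the new ingredient. (3) Assemble the sum over the relevant Weyl-group elements (or over the integral domain) and recognize it as a ratio of geometric series: the numerator produces $L^\sigma_\epsilon(s+\tfrac12)$ — the factor $(1-q^{1-2s})$ in $L^\sigma_-$ arising precisely from one extra cancellation forced by the $\kappa_r^-$-sign — while the denominator $b_{2r}(s)$ comes from the normalization of $\ff_{r,\epsilon}^{(s)}$ as a good section (this is the ``local $L$-factor of the degenerate principal series''), and the constant $c^r_\epsilon(s)$ collects the remaining $q$-powers, volumes $\vol(I_r^0 w I_r^0)$, and the leading Bruhat-cell contribution. (4) For $\epsilon=+$, cross-check against the known unramified doubling computation of \cite{Liu11}*{Appendix} (or \cite{Li92}), which gives $c^r_+(s)=1$; for $\epsilon=-$, the extra factors in $c^r_-(s)$ should be pinned down by evaluating the integral at a single convenient value of $s$ (e.g.\ $s\to\infty$ or a pole-free point) and matching, rather than by tracking every constant symbolically.

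\textbf{Main obstacle.} The hard part will be step (2)–(3) in the $\epsilon=-$ case: keeping precise track of the Bruhat-cell index $i(k(g))$ of the $K_r^\Box$-part of $\bw_r(g,1)$ as $g$ ranges over the integration domain, because the sign $(-q)^{-i(k(g))}$ does not factor as cleanly as the absolute-value character $|\Delta|_E^{s+r}$ does, and one must show that the resulting alternating sum still telescopes into the closed form with the single extra Euler factor $(1-q^{1-2s})$. Concretely, this amounts to a careful induction on $r$ (peeling off one hyperbolic plane at a time, reducing $G_r^\Box$-computations to $G_{r-1}^\Box$ plus a rank-one $\SL_2$-type integral), where at each stage the rank-one computation is essentially the $r=1$ case: $Z(\xi^{(\sigma_1)}_-,\ff_{1,-}^{(s)})$ for $G_1=\rU(1,1)$, which can be done by hand and whose answer already exhibits both the Steinberg-type $L$-factor and the constant $c^1_-(s)$. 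I expect the bookkeeping, not the ideas, to be the bottleneck, and I would organize the write-up around the recursion so that the sign factor is handled uniformly.
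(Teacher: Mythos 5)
Your plan correctly identifies the starting point (unfold over the Iwasawa decomposition relative to the Siegel parabolic and track the Bruhat-cell index of the $K_r^\Box$-component), but the step you yourself flag as the ``main obstacle'' --- showing that the alternating sum weighted by $(-q)^{-i(k(g))}$ telescopes into the closed form with the extra factor $(1-q^{1-2s})$ --- is precisely where a new idea is needed, and the proposal does not supply one. The paper's device is to recognize the inner integral over $N_r^0(F)$ as a standard intertwining operator $T_{\bw'_r}$ applied to the Iwahori-eigensection $\varphi^{\sigma^\Box_s}_\epsilon=\ff^{(s)}_{r,\epsilon}$ of the degenerate principal series. Since the eigenspace $(\rI^{\sigma^\Box}_{W_{2r}})^{I_{2r}}[\kappa_{2r}^\epsilon]$ is one-dimensional, $T_{\bw'_r}\varphi^{\sigma^\Box_s}_\epsilon=C^\epsilon_{\bw'_r}(s)\varphi^{\bw'_r\sigma^\Box_s}_\epsilon$, and the scalars are computed in closed form by Casselman's rank-one reductions (the Gindikin--Karpelevich formula for $\kappa^+$, and \cite{Cas80}*{Theorem~3.4} iterated for $\kappa^-$). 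Because $\bw''_r(m(a),1_{2r})$ lies in $P_r^\Box(F)$, the two eigensections agree there, and the whole $\epsilon=-$ integral reduces to the known $\epsilon=+$ integral of \cite{Li92} multiplied by the explicit ratio $C^-_{\bw'_r}(s)/C^+_{\bw'_r}(s)=(-q)^{-r}\zeta_F(2s+2r)/\zeta_F(2s)$. This comparison is what produces both the factor $(1-q^{1-2s})$ in $L^\sigma_-$ and the function $c^r_-(s)$ simultaneously, with no cell-by-cell bookkeeping. Your proposed induction on $r$ by peeling off hyperbolic planes is not obviously wrong, but it is exactly the hard route, and the proposal gives no mechanism for the telescoping.

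A second concrete gap: you propose to pin down $c^r_-(s)$ ``by evaluating the integral at a single convenient value of $s$ and matching.'' This cannot work, because $c^r_-(s)$ is not a constant but a nontrivial rational function of $q^{-s}$ (it contains the factor $(1-q^{-2s-2r})^{-1}$), and a single-point evaluation does not determine it. Any correct argument must track the full $s$-dependence, which is again what the intertwining-operator computation delivers for free. Finally, note that the $M_r^0(F)$-part of the Iwasawa integral does not disappear: it becomes a $\GL_r(E)$-integral of a matrix coefficient of $\tau$ against $|\det|_E$-powers, and in the paper this is absorbed by observing that the same integral occurs verbatim in the $\epsilon=+$ computation, so only the ratio of the two cases needs to be evaluated. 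Your plan, as written, would have to evaluate this integral directly.
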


\begin{proof}
When $\epsilon=+$, this is proved in \cite{Li92}*{Theorem~3.1}. Now we consider the case where $\epsilon=-$. Put $\pi\coloneqq\pi^\sigma_{W_r,-}$ to ease notation. Recall the Bruhat decomposition $K_r=\coprod_{i=0}^r\cB_i$ from Remark \ref{re:eigenvector}.

Recall the isomorphism $m\colon\Res_{E/F}\GL_r\xrightarrow{\sim}M_r^0$ from \eqref{eq:levi}. Let $\tau$ be the unramified constituent of the normalized induction of $\boxtimes_{i=1}^r|\;|_E^{\sigma_i}$. We fix vectors $v_0\in\tau$ and $v_0^\vee\in\tau^\vee$ fixed by $M_r^0(F)\cap K_r=m(\GL_r(O_E))$ such that $\langle v_0^\vee,v_0\rangle_\tau=1$. Put $\Pi\coloneqq\Ind_{P_r^0}^{G_r}(\tau)$ and identify $\Pi^\vee$ with $\Ind_{P_r^0}^{G_r}(\tau^\vee)$ via the pairing
\[
\langle\varphi^\vee,\varphi\rangle_\Pi\coloneqq\int_{K_r}\langle\varphi^\vee(k),\varphi(k)\rangle_\tau\rd k.
\]
Let $\varphi_0\in\Ind_{P_r^0}^{G_r}(\tau)$ and $\varphi_0^\vee\in\Ind_{P_r^0}^{G_r}(\tau^\vee)$ be the unique vectors such that $\varphi_0(k)=(-q)^{-i}\tau_0$ and $\varphi_0^\vee(k)=(-q)^{-i}\tau_0^\vee$ if $k\in\cB_i$ for $0\leq i\leq r$, respectively. By Lemma \ref{le:eigenvector} and Remark \ref{re:eigenvector}, we have $\varphi_0\in\Pi^{I_r}[\kappa_r^-]$ and $\varphi_0^\vee\in(\Pi^\vee)^{I_r}[\kappa_r^-]$. Thus, $\pi$ and $\pi^\vee$ are constituents of $\Pi$ and $\Pi^\vee$, respectively; and we have
\begin{align}\label{eq:zeta}
Z(\xi^\sigma_-,\ff_{r,-}^{(s)})=C^{-1}
\int_{G_r(F)}\ff_{r,-}^{(s)}(\bw_r(g,1_{2r}))\langle\Pi^\vee(g)\varphi^\vee_0,\varphi_0\rangle_\Pi\rd g
\end{align}
where
\[
C\coloneqq\langle\varphi^\vee_0,\varphi_0\rangle_\Pi=\sum_{i=0}^rq^{-2i}\vol(\cB_i)=\frac{1+q}{q(1+q^{2r-1})}.
\]
To proceed, note that for $k\in K_r$, we have $\bw_r(k,k)\bw_r^{-1}\in I_r^\Box$, hence $|\Delta(\bw_r(k,k)\bw_r^{-1})|_E=1$. Then we have
\begin{align*}
\eqref{eq:zeta}&=C^{-1}
\int_{G_r(F)}\ff_{r,-}^{(s)}(\bw_r(g,1_{2r}))\int_{K_r}\langle\varphi^\vee_0(kg),\varphi_0(k)\rangle_\tau\rd k\rd g \\
&=C^{-1}\int_{G_r(F)}\ff_{r,-}^{(s)}(\bw_r(g,1_{2r}))\sum_{i=0}^r\int_{\cB_i}(-q)^{-i}\langle\varphi^\vee_0(kg),\varphi_0(1_{2r})\rangle_\tau\rd k\rd g \\
&=C^{-1}\int_{G_r(F)}\sum_{i=0}^r(-q)^{-i}\int_{\cB_i}\ff_{r,-}^{(s)}(\bw_r(k^{-1}g,1_{2r}))\rd k
\langle\varphi^\vee_0(g),\varphi_0(1_{2r})\rangle_\tau\rd g\\
&=C^{-1}\int_{G_r(F)}\sum_{i=0}^r(-q)^{-i}\int_{\cB_i}\ff_{r,-}^{(s)}(\bw_r(g,k))\rd k
\langle\varphi^\vee_0(g),\varphi_0(1_{2r})\rangle_\tau\rd g.
\end{align*}
Recall the Iwasawa decomposition $G_r(F)=\coprod_{i=0}^r M_r^0(F)N_r^0(F)\cB_i$, and equip $M_r^0(F)$ and $N_r^0(F)$ with the Haar measures under which $M_r^0(F)\cap K_r$ and $N_r^0(F)\cap K_r$ have volume $1$, respectively. Then we have
\begin{align}\label{eq:zeta1}
&\quad Z(\xi^\sigma_-,\ff_{r,-}^{(s)}) \\
&=C^{-1}\int_{M_r^0(F)}\int_{N_r^0(F)}\int_{K_r}\sum_{i=0}^r(-q)^{-i}\int_{\cB_i}\ff_{r,-}^{(s)}(\bw_r(mnk',k))\rd k
\langle\varphi^\vee_0(mnk'),\varphi_0(1_{2r})\rangle_\tau\rd k'\rd n\rd m \notag\\
&=C^{-1}\int_{M_r^0(F)}\int_{N_r^0(F)}\sum_{j=0}^r\sum_{i=0}^r(-q)^{-i-j}\int_{\cB_j}\int_{\cB_i}\ff_{r,-}^{(s)}(\bw_r(mnk',k))\rd k\rd k'
\langle\varphi^\vee_0(m),\varphi_0(1_{2r})\rangle_\tau\rd n\rd m \notag\\
&=C^{-1}\int_{\GL_r(E)}\(\sum_{j=0}^r\sum_{i=0}^r(-q)^{-i-j}\int_{\cB_j}\int_{\cB_i}F^s_-(m(a)k',k)\rd k\rd k'\)
|\dtm a|_E^{-r/2}\langle\tau^\vee(a)v_0^\vee,v_0\rangle_\tau\rd a, \notag
\end{align}
where
\[
F^s_\epsilon(g)\coloneqq\int_{N_r^0(F)}\varphi^{\sigma^\Box_s}_\epsilon(\bw_r(n,1_{2r})g)\rd n
\]
for $\epsilon=\pm$ and $g\in G_r^\Box(F)$. Here, we have adopted Remark \ref{re:basis} for the last equality.

Let $\bw'_r\in\fW_r^\Box$ be the element that acts by the following way
\[
\bw'_r\colon
\begin{dcases}
e^\Box_i\mapsto e^\Box_{3r+1-i},\\
e^\Box_{r+i}\mapsto e^\Box_{r+i},\\
e^\Box_{2r+i}\mapsto -e^\Box_{r+1-i},\\
e^\Box_{3r+i}\mapsto e^\Box_{3r+i},
\end{dcases}
\]
for $1\leq i\leq r$. Put $\bw''_r=(\bw'_r)^{-1}\bw_r$, which belongs to $N_r^\Box(F)$ and normalizes $N_r^0(F)\times\{1_{2r}\}$ with the Haar measure preserved. Then we have
\begin{align}\label{eq:zeta2}
F^s_\epsilon(g)=T_{\bw'_r}(\varphi^{\sigma^\Box_s}_\epsilon)(\bw''_rg),
\end{align}
where $T_{\bw'_r}$ is the intertwining operator in \cite{Cas80}*{Section~3}. Since $(\rI^{\sigma^\Box}_{W_{2r}})^{I_{2r}}[\kappa_{2r}^\epsilon]$ is spanned by $\varphi^{\sigma^\Box}_\epsilon$, we have
\begin{align}\label{eq:zeta3}
T_{\bw'_r}(\varphi^{\sigma^\Box_s}_\epsilon)=C_{\bw'_r}^\epsilon(s)\cdot\varphi^{\bw'_r\sigma^\Box_s}_\epsilon
\end{align}
for some meromorphic function $C_{\bw'_r}^\epsilon(s)$. By the Gindikin--Karpelevich formula (see \cite{Cas80}*{Theorem~3.1}), we have
\[
C_{\bw'_r}^+(s)=\prod_{i=1}^r\frac{\zeta_E(2s+2i)}{\zeta_E(2s+r+i)}\prod_{i=1}^r\frac{\zeta_F(2s+2i-1)}{\zeta_F(2s+2i)};
\]
and by successively applying \cite{Cas80}*{Theorem~3.4}, we have
\begin{align*}
C_{\bw'_r}^-(s)
&=\prod_{1\leq i<j\leq r}\frac{1-(q^{-2})^{2s+2r-i-j+2}}{1-(q^{-2})^{2s+2r-i-j+1}}
\prod_{i=1}^r\frac{q^{-2s-2r+2i}-1}{q(1-q^{-2s-2r+2i-1})} \\
&=(-q)^{-r}\prod_{1\leq i<j\leq r}\frac{\zeta_E(2s+2r-i-j+1)}{\zeta_E(2s+2r-i-j+2)}
\prod_{i=1}^r\frac{\zeta_F(2s+2r-2i+1)}{\zeta_F(2s+2r-2i)} \\
&=(-q)^{-r}\prod_{i=1}^r\frac{\zeta_E(2s+2i)}{\zeta_E(2s+r+i)}\prod_{i=1}^r\frac{\zeta_F(2s+2i-1)}{\zeta_F(2s+2i-2)}.
\end{align*}
Together, we have
\begin{align}\label{eq:zeta6}
\frac{C_{\bw'_r}^-(s)}{C_{\bw'_r}^+(s)}=(-q)^{-r}\frac{\zeta_F(2s+2r)}{\zeta_F(2s)}.
\end{align}

Combining \eqref{eq:zeta1}, \eqref{eq:zeta2}, and \eqref{eq:zeta3}, we have
\begin{align}\label{eq:zeta4}
& Z(\xi^\sigma_-,\ff_{r,-}^{(s)})=C^{-1}C_{\bw'_r}^-(s)\int_{\GL_r(E)} \\
&\(\sum_{j=0}^r\sum_{i=0}^r(-q)^{-i-j}\int_{\cB_j}\int_{\cB_i}
\varphi^{\bw'_r\sigma^\Box_s}_-(\bw''_r(m(a)k',k))\rd k\rd k'\)
|\dtm a|_E^{-r/2}\langle\tau^\vee(a)v^\vee_0,v_0\rangle_\tau\rd a. \notag
\end{align}
As $\bw''_r(m(a),1_{2r})\in P_r^\Box(F)$, we have
\[
\varphi^{\bw'_r\sigma^\Box_s}_-(\bw''_r(m(a)k',k))=(-q)^{-i-j}\varphi^{\bw'_r\sigma^\Box_s}_-(\bw''_r(m(a),1_{2r}))
\]
if $(k,k')\in\cB_i\times\cB_j\subseteq\cB_{i+j}^\Box$. By \eqref{eq:zeta4}, we have
\begin{align}\label{eq:zeta5}
Z(\xi^\sigma_-,\ff_{r,-}^{(s)})
&=CC_{\bw'_r}^-(s)\int_{\GL_r(E)}\varphi^{\bw'_r\sigma^\Box_s}_-(\bw''_r(m(a),1_{2r}))|\dtm a|_E^{-r/2}\langle\tau^\vee(a)v^\vee_0,v_0\rangle_\tau\rd a \\
&=CC_{\bw'_r}^-(s)\int_{\GL_r(E)}\varphi^{\bw'_r\sigma^\Box_s}_+(\bw''_r(m(a),1_{2r}))|\dtm a|_E^{-r/2}\langle\tau^\vee(a)v^\vee_0,v_0\rangle_\tau\rd a. \notag
\end{align}

By a similar argument, we have
\begin{align}\label{eq:zeta8}
Z(\xi^\sigma_+,\ff_{r,+}^{(s)})=C_{\bw'_r}^+(s)\int_{\GL_r(E)}\varphi^{\bw'_r\sigma^\Box_s}_+(\bw''_r(m(a),1_{2r}))
|\dtm a|_E^{-r/2}\langle\tau^\vee(a)v^\vee_0,v_0\rangle_\tau\rd a.
\end{align}
Combining \eqref{eq:zeta5}, \eqref{eq:zeta8}, and \eqref{eq:zeta6}, we have
\begin{align*}
Z(\xi^\sigma_-,\ff_{r,-}^{(s)})&=C\cdot\frac{C_{\bw'_r}^-(s)}{C_{\bw'_r}^+(s)}\cdot Z(\xi^\sigma_+,\ff_{r,+}^{(s)}) \\
&=(-q)^{-r}C\cdot\frac{\zeta_F(2s+2r)}{\zeta_F(2s)}\cdot Z(\xi^\sigma_+,\ff_{r,+}^{(s)}) \\
&=(-q)^{-r}C\cdot\frac{\zeta_F(2s+2r)}{\zeta_F(2s)}\cdot\frac{L^\sigma_+(s+\tfrac{1}{2})}{b_{2r}(s)} \\
&=c^r_-(s)\cdot\frac{L^\sigma_-(s+\tfrac{1}{2})}{b_{2r}(s)}.
\end{align*}
The proposition is proved.
\end{proof}

Recall from \cite{Yam14}*{Section~3.5} that we have the intertwining operator
\[
M(s)\colon\rI^\Box_r(s)\to\rI^\Box_r(-s),
\]
and its normalized version
\[
M^\dag_{\psi_F}(s)\coloneqq q^{4\fc rs}\frac{b_{2r}(-s)}{a_{2r}(s)}M(s),
\]
where $a_{2r}$ and $b_{2r}$ are those in \eqref{eq:factor}.

\begin{lem}\label{le:intertwining}
For $\epsilon=\pm$, we have
\[
\frac{b_{2r}(s)}{c^r_\epsilon(s)}\cdot M^\dag_{\psi_F}(s)\ff^{(s)}_{r,\epsilon}=\epsilon
q^{(4\fc r+(\epsilon1)-1)s}\cdot\frac{b_{2r}(-s)}{c^r_\epsilon(-s)}\cdot\ff^{(-s)}_{r,\epsilon}.
\]
\end{lem}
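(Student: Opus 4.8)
The plan is to compute the action of the normalized intertwining operator $M^\dag_{\psi_F}(s)$ on the specific good section $\ff^{(s)}_{r,\epsilon}$ directly, exploiting the identification of $\ff^{(s)}_{r,\epsilon}$ with the Iwahori-type eigenvector $\varphi^{\sigma^\Box_s}_\epsilon$ from Remark \ref{re:basis}. First I would observe that $M(s)\colon\rI^\Box_r(s)\to\rI^\Box_r(-s)$ is, up to the inclusions $\rI^\Box_r(s)\subseteq\rI^{\sigma^\Box_s}_{W_{2r}}$ and $\rI^\Box_r(-s)\subseteq\rI^{\sigma^\Box_{-s}}_{W_{2r}}$, a composition of rank-one intertwining operators attached to the long Weyl element of the Siegel parabolic, i.e.\ the element $w^\Box\in\fW^\Box_r$ with $w^\Box\sigma^\Box_s=\sigma^\Box_{-s}$ (this is the element sending each $T_i$ to $T_i^{-1}$, realized as a product of the $w_j$'s). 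Since $(\rI^{\sigma^\Box}_{W_{2r}})^{I_{2r}}[\kappa_{2r}^\epsilon]$ is one-dimensional (Lemma \ref{le:eigenvector} applied to $G_r^\Box=G_{2r}$) and spanned by $\varphi^{\sigma^\Box}_\epsilon$, we must have $T_{w^\Box}(\varphi^{\sigma^\Box_s}_\epsilon)=D^\epsilon(s)\cdot\varphi^{\sigma^\Box_{-s}}_\epsilon$ for some meromorphic scalar $D^\epsilon(s)$, exactly as in \eqref{eq:zeta3}; so $M(s)\ff^{(s)}_{r,\epsilon}$ is a known scalar times $\ff^{(-s)}_{r,\epsilon}$, and likewise for $M^\dag_{\psi_F}(s)$ after multiplying by $q^{4\fc rs}b_{2r}(-s)/a_{2r}(s)$.

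The key computational step is then to pin down the scalar $D^\epsilon(s)$, and in particular the ratio $D^-(s)/D^+(s)$, which should have the same shape as \eqref{eq:zeta6}. I would compute $D^+(s)$ by the Gindikin--Karpelevich formula (Casselman, \cite{Cas80}*{Theorem~3.1}) applied to the long element of the Siegel parabolic of $G^\Box_r$: this gives a product of factors $\zeta_E$ and $\zeta_F$ over the positive roots made negative, and the normalization constant $q^{4\fc rs}b_{2r}(-s)/a_{2r}(s)$ in $M^\dag_{\psi_F}$ is designed precisely so that $M^\dag_{\psi_F}(s)\ff^{(s)}_{r,+}=\ff^{(-s)}_{r,+}$ in the $\epsilon=+$ case (which recovers the known unramified statement and fixes all conventions). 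For $\epsilon=-$ I would compute $D^-(s)$ by successively applying Casselman's rank-one formula \cite{Cas80}*{Theorem~3.4} for the $\kappa^-$-eigenvector at each simple reflection, exactly as in the displayed chain of equalities in the proof of Proposition \ref{pr:zeta} that produced $C^-_{\bw'_r}(s)$; this yields the ratio
\[
\frac{D^-(s)}{D^+(s)}=(\text{sign})\cdot q^{(\ast)s}\cdot\frac{\zeta_F(\ast)}{\zeta_F(\ast)},
\]
from which the claimed identity with the prefactor $b_{2r}/c^r_\epsilon$ and the power $q^{(4\fc r+(\epsilon1)-1)s}$ drops out after bookkeeping the definitions \eqref{eq:factor} of $b_{2r}$ and $c^r_\epsilon$.

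An alternative, possibly cleaner route is to bypass the root-by-root computation: the functional equation of the doubling zeta integral (\cite{Yam14}*{Theorem~5.2}) relates $Z(\xi^\sigma_\epsilon,M^\dag_{\psi_F}(s)\ff^{(s)}_{r,\epsilon})$ to $\varepsilon(s+\tfrac12,\pi^\sigma_{W_r,\epsilon},\psi_F)\cdot Z(\xi^\sigma_\epsilon,\ff^{(-s)}_{r,\epsilon})$ up to the known $\gamma$-factor, and since $M^\dag_{\psi_F}(s)\ff^{(s)}_{r,\epsilon}$ is \emph{a priori} a scalar multiple of $\ff^{(-s)}_{r,\epsilon}$, one can read off that scalar by comparing with the explicit value of $Z(\xi^\sigma_\epsilon,\ff^{(\pm s)}_{r,\epsilon})$ from Proposition \ref{pr:zeta} — provided one already knows $L(s,\pi^\sigma_{W_r,\epsilon})$ and $\varepsilon(s,\pi^\sigma_{W_r,\epsilon},\psi_F)$. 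Since those $L$- and $\varepsilon$-factors are themselves being established (Theorem \ref{th:gcd_pre}) partly using this very lemma, I would avoid circularity and carry out the direct Gindikin--Karpelevich computation instead.

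The main obstacle I anticipate is purely bookkeeping: correctly tracking the conductor exponent $\fc$ and the many half-integer shifts in the $\zeta_E$ versus $\zeta_F$ factors (recall $|\;|_E$ sends a uniformizer to $q^{-2}$, so $\zeta_E(s)=(1-q^{-2s})^{-1}$ while $\zeta_F(s)=(1-q^{-s})^{-1}$), and matching them against the precise normalization $M^\dag_{\psi_F}(s)=q^{4\fc rs}\,b_{2r}(-s)a_{2r}(s)^{-1}M(s)$ so that the $\epsilon=+$ identity comes out with prefactor exactly $1$ (since $c^r_+=1$); once that base case is calibrated, the $\epsilon=-$ case follows by multiplying through by the ratio $D^-(s)/D^+(s)$ and the ratio $c^r_+(s)/c^r_-(s)$, and the sign $\epsilon$ and the factor $q^{((\epsilon1)-1)s}=q^{-2s}$ (for $\epsilon=-$) should emerge from the extra $(-q)^{-r}$-type contributions in Casselman's Theorem~3.4 relative to Theorem~3.1, just as in \eqref{eq:zeta6}.
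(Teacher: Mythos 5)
Your proposal matches the paper's proof in essentials: both exploit the one-dimensionality of the $\kappa_{2r}^\epsilon$-eigenspace to see that $M(s)\ff^{(s)}_{r,\epsilon}$ is a scalar multiple of $\ff^{(-s)}_{r,\epsilon}$, handle $\epsilon=+$ by the known unramified computation (the paper cites \cite{Yam14}*{Proposition~3.1(5)}, which is the Gindikin--Karpelevich calculation you describe), and obtain the $\epsilon=-$ scalar by successively applying \cite{Cas80}*{Theorem~3.4} exactly as in the computation of $C^-_{\bw'_r}(s)$ in Proposition \ref{pr:zeta}. Your decision to avoid the functional-equation route on grounds of circularity is also the right call, since Theorem \ref{th:gcd} depends on this lemma.
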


\begin{proof}
When $\epsilon=+$, this is proved in \cite{Yam14}*{Proposition~3.1(5)}. Now we consider the case where $\epsilon=-$.

By the relation $\ff_{r,\epsilon}^{(s)}=\varphi^{\sigma^\Box_s}_\epsilon$ from Remark \ref{re:basis} and successively applying \cite{Cas80}*{Theorem~3.4}, we have
\begin{align*}
M(s)\ff^{(s)}_{r,-}&=\prod_{1\leq i<j\leq 2r}\frac{1-(q^{-2})^{2s+2r-i-j+2}}{1-(q^{-2})^{2s+2r-i-j+1}}
\prod_{i=1}^{2r}\frac{q^{-2s-2r+2i}-1}{q(1-q^{-2s-2r+2i-1})}\cdot\ff^{(-s)}_{r,-} \\
&=q^{-2r}\cdot\prod_{1\leq i<j\leq 2r}\frac{\zeta_E(2s+2r-i-j+1)}{\zeta_E(2s+2r-i-j+2)}
\prod_{i=1}^{2r}\frac{\zeta_F(2s+2r-2i+1)}{\zeta_F(2s+2r-2i)}\cdot\ff^{(-s)}_{r,-} \\
&=q^{-2r}\cdot\prod_{j=1}^{2r}\frac{\zeta_E(2s+2r-2j+2)}{\zeta_E(2s+2r-j+1)}
\prod_{i=1}^{2r}\frac{\zeta_F(2s+2r-2i+1)}{\zeta_F(2s+2r-2i)}\cdot\ff^{(-s)}_{r,-} \\
&=q^{-2r}\cdot\frac{a_{2r}(s)}{b_{2r}(s)}\cdot\frac{\zeta_F(2s+2r)}{\zeta_F(2s-2r)}\cdot\ff^{(-s)}_{r,-} \\
&=-q^{-2s}\cdot\frac{a_{2r}(s)}{b_{2r}(s)}\cdot\frac{\zeta_F(2s+2r)}{\zeta_F(-2s+2r)}\cdot\ff^{(-s)}_{r,-} \\
&=-q^{-2s}\cdot\frac{a_{2r}(s)}{b_{2r}(s)}\cdot\frac{c^r_-(s)}{c^r_-(-s)}\cdot\ff^{(-s)}_{r,-}.
\end{align*}
The lemma follows immediately.
\end{proof}

\begin{theorem}\label{th:gcd}
For $\sigma\in(\dC/\tfrac{\pi i}{\log q}\dZ)^r$, we have
\begin{enumerate}
  \item $L(s,\pi^\sigma_{W_r,+})=L^\sigma_+(s)$ and $\varepsilon(s,\pi^\sigma_{W_r,+},\psi_F)=q^{2\fc r(2s-1)}$;

  \item $L(s,\pi^\sigma_{W_r,-})=L^\sigma_-(s)$ and $\varepsilon(s,\pi^\sigma_{W_r,-},\psi_F)=-q^{(2\fc r-1)(2s-1)}$ if $\sigma$ contains either $\tfrac{1}{2}$ or $-\tfrac{1}{2}$.
\end{enumerate}
\end{theorem}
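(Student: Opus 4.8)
The plan is to read off both $L(s,\pi^\sigma_{W_r,\epsilon})$ and $\varepsilon(s,\pi^\sigma_{W_r,\epsilon},\psi_F)$ from the explicit doubling zeta integral of Proposition~\ref{pr:zeta} together with the intertwining identity of Lemma~\ref{le:intertwining}, using the characterisation of the doubling factors in \cite{Yam14}*{Theorem~5.2}. Write $\pi\coloneqq\pi^\sigma_{W_r,\epsilon}$; recall that $\pi$ is self-dual, so $L(s,\pi)$ is an honest local $L$-factor (the reciprocal of a polynomial in $q^{-s}$ with constant term $1$), characterised by the greatest common divisor property among the good-section zeta integrals, while $\varepsilon(s,\pi,\psi_F)$ is the monomial in $q^{s}$ appearing in the doubling functional equation $Z(\xi,M^\dag_{\psi_F}(s)f^{(s)})=\gamma(s+\tfrac12,\pi,\psi_F)\,Z(\xi,f^{(s)})$, where $\gamma(s,\pi,\psi_F)=\varepsilon(s,\pi,\psi_F)L(1-s,\pi)/L(s,\pi)$. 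Thus, once $\gamma(s,\pi,\psi_F)$ is known explicitly and $L(s,\pi)$ is known up to a ``self-dual'' ambiguity, the pair is determined.

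First I would compute $\gamma(s,\pi,\psi_F)$ uniformly in $\epsilon$. By Lemma~\ref{le:intertwining}, $M^\dag_{\psi_F}(s)\ff^{(s)}_{r,\epsilon}$ equals $\epsilon\,q^{(4\fc r+(\epsilon 1)-1)s}\,\frac{b_{2r}(-s)\,c^r_\epsilon(s)}{b_{2r}(s)\,c^r_\epsilon(-s)}$ times $\ff^{(-s)}_{r,\epsilon}$. Plugging $(\xi^\sigma_\epsilon,\ff^{(s)}_{r,\epsilon})$ into the functional equation and evaluating $Z(\xi^\sigma_\epsilon,\ff^{(\pm s)}_{r,\epsilon})$ by Proposition~\ref{pr:zeta}, every auxiliary factor $b_{2r}(\pm s)$ and $c^r_\epsilon(\pm s)$ cancels, leaving
\[
\gamma(s,\pi^\sigma_{W_r,\epsilon},\psi_F)=\epsilon\,q^{(4\fc r+(\epsilon 1)-1)(s-\frac12)}\cdot\frac{L^\sigma_\epsilon(1-s)}{L^\sigma_\epsilon(s)}.
\]
For $\epsilon=+$ this is $q^{2\fc r(2s-1)}L^\sigma_+(1-s)/L^\sigma_+(s)$; since $L^\sigma_+(s)$ is visibly an honest $L$-factor and the spherical case of Proposition~\ref{pr:zeta} (with $c^r_+(s)=1$) is the usual basic identity giving $L(s,\pi^\sigma_{W_r,+})=L^\sigma_+(s)$ (as in \cite{Li92}), part~(1) follows with $\varepsilon(s,\pi^\sigma_{W_r,+},\psi_F)=q^{2\fc r(2s-1)}$.

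For $\epsilon=-$ the formula reads $\gamma(s,\pi^\sigma_{W_r,-},\psi_F)=-q^{(2\fc r-1)(2s-1)}L^\sigma_-(1-s)/L^\sigma_-(s)$. Here I would invoke, for the first time, the hypothesis that $\sigma$ contains $\tfrac12$ or $-\tfrac12$: precisely this makes $L^\sigma_-(s)=(1-q^{1-2s})\prod_{i=1}^r\tfrac{1}{(1-q^{2\sigma_i-2s})(1-q^{-2\sigma_i-2s})}$ into an honest $L$-factor, the numerator $(1-q^{1-2s})$ cancelling one of the denominator factors. Granting the identification $L(s,\pi^\sigma_{W_r,-})=L^\sigma_-(s)$, the displayed $\gamma$-factor forces $\varepsilon(s,\pi^\sigma_{W_r,-},\psi_F)=-q^{(2\fc r-1)(2s-1)}$; in particular $\varepsilon(\tfrac12,\pi^\sigma_{W_r,-},\psi_F)=-1\ne 1=\varepsilon(\tfrac12,\pi^{\sigma'}_{W_r,+},\psi_F)$, so $\pi^\sigma_{W_r,-}$ cannot be unramified.

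The step I expect to be the main obstacle is the identification $L(s,\pi^\sigma_{W_r,-})=L^\sigma_-(s)$, for which the $\gamma$-factor alone is not enough. For the divisibility ``$L^\sigma_-(s)\mid L(s,\pi^\sigma_{W_r,-})$'' I would note that in $Z(\xi^\sigma_-,\ff^{(s)}_{r,-})=c^r_-(s)L^\sigma_-(s+\tfrac12)/b_{2r}(s)$ the sole pole of $c^r_-(s)$ is cancelled by a pole of $b_{2r}(s)$, so this good-section zeta integral equals a polynomial in $q^{-2s}$ times $L^\sigma_-(s+\tfrac12)$; hence every pole of $L^\sigma_-(s+\tfrac12)$ is a genuine pole of a good-section zeta integral. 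For the reverse divisibility I would combine a standard bound on the poles of the doubling $L$-factor in terms of the exponents of $\pi^\sigma_{W_r,-}$ with the multiplicativity of the doubling $\gamma$-/$L$-factor (\cite{Yam14}; cf.\ \cite{LTXZZ}*{Appendix~C}): since $\pi^\sigma_{W_r,-}$ is a constituent of the unramified principal series $\rI^\sigma_{W_r}$, its doubling $L$-factor divides $\prod_{i=1}^r\tfrac{1}{(1-q^{2\sigma_i-2s})(1-q^{-2\sigma_i-2s})}$, and a short analysis of which sub-product is cut out by the character $\kappa_r^-$ — using once more that $\sigma$ contains $\pm\tfrac12$ — identifies it with $L^\sigma_-(s)$. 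Matching the two divisibilities closes the argument. The only other delicate point is the bookkeeping of the normalising factors $a_{2r},b_{2r},c^r_\epsilon$ through the functional equation; the rest is a formal consequence of Proposition~\ref{pr:zeta} and Lemma~\ref{le:intertwining}.
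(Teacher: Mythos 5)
Your overall strategy coincides with the paper's: both arguments rest on Proposition \ref{pr:zeta}, Lemma \ref{le:intertwining}, and the characterization of the doubling factors in \cite{Yam14}*{Theorem~5.2}, and your $\gamma$-factor computation (hence the determination of $\varepsilon$ once $L$ is known) is exactly the computation carried out at the end of the paper's proof. The difficulty is the step you yourself flag as the main obstacle, the identification $L(s,\pi^\sigma_{W_r,-})=L^\sigma_-(s)$: as written, neither of your two divisibilities is actually established.

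For the ``reverse divisibility'' your sketch is essentially circular. Knowing that $\pi^\sigma_{W_r,-}$ is a constituent of $\rI^\sigma_{W_r}$ and invoking multiplicativity of the doubling $\gamma$-factor cannot by itself pin down the $L$-factor of an individual constituent: the trivial representation and the Steinberg representation of $G_1(F)$ are constituents of the same principal series $\rI^{(1/2)}_{W_1}$, yet their doubling $L$-factors are $L^{(1/2)}_+(s)=(1-q^{1-2s})^{-1}(1-q^{-1-2s})^{-1}$ and $L^{(1/2)}_-(s)=(1-q^{-1-2s})^{-1}$ respectively; deciding ``which sub-product is cut out by $\kappa_r^-$'' is precisely the content of the theorem, not a short analysis. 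The paper closes this in two steps: for $r=1$ it combines \cite{Yam14}*{Lemma~6.1(1)} with \cite{Yam14}*{Lemma~7.2} (holomorphy of $L(s,\pi)$ for $\RE s>0$) to reduce to the two candidates $1$ and $(1-q^{-1-2s})^{-1}$, and then uses the non-entireness of the explicit integral $Z(\xi^{(1/2)}_-,\ff^{(s)}_{1,-})$ to select the latter; for $r>1$ it observes that the $\kappa_r^-$-eigenvector lies in the subrepresentation of $\rI^\sigma_{W_r}$ induced from $\pi^{(1/2)}_{W_1,-}\boxtimes|\;|_E^{\sigma_2}\boxtimes\cdots\boxtimes|\;|_E^{\sigma_r}$, so that \cite{Yam14}*{Lemma~6.1(1)} and the $r=1$ case show $L(s,\pi^\sigma_{W_r,-})/L^\sigma_-(s)$ is entire, and then concludes equality from \cite{Yam14}*{Lemma~6.1(2)} using the nonvanishing of $Z(\xi^\sigma_-,\ff^{(s)}_{r,-})/L^\sigma_-(s)$ for $\RE s\geq 0$. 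Your ``forward divisibility'' also has a small leak: the zeta integral equals $L^\sigma_-(s+\tfrac{1}{2})$ times the polynomial $\tfrac{(1+q)}{(-q)^rq(1+q^{2r-1})}\prod_{i=1}^{2r-1}(1-(-1)^iq^{-2s-i})$, whose zeros can a priori cancel poles of $L^\sigma_-(s+\tfrac{1}{2})$ for suitable $\sigma$, so ``every pole of $L^\sigma_-(s+\tfrac{1}{2})$ is a genuine pole of a good-section zeta integral'' does not follow from this single test vector; the paper sidesteps this by using only the nonvanishing on $\RE s\geq 0$ together with Yamana's lemma. With these repairs your argument becomes the paper's.
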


\begin{proof}
Part (1) is proved in \cite{Yam14}*{Proposition~7.1}. Now we consider (2). Put $\pi\coloneqq\pi^\sigma_{W_r,-}$ to ease notation. We first compute $L(s,\pi)$.

Assume first $r=1$, and $\sigma=(\tfrac{1}{2})$ without lost of generality. By \cite{Yam14}*{Lemma~6.1(1)}, we know that $(1-q^{1-2s})(1-q^{-1-2s})L(s,\pi)$ is entire. By \cite{Yam14}*{Lemma~7.2}, we know that $L(s,\pi)$ is holomorphic for $\RE s>0$, which implies that $L(s,\pi)$ is either $1$ or $(1-q^{-1-2s})^{-1}$. By Proposition \ref{pr:zeta}, we have
\[
Z(\xi^{(1/2)}_-,\ff_{1,-}^{(s)})=\frac{1+q^{-2s-1}}{-q^2(1-q^{-1-2s})},
\]
which is not entire. Thus, we have $L(s,\pi)=(1-q^{-1-2s})^{-1}=L^\sigma_-(s)$.

Now suppose that $r>1$, and that $\sigma=(\tfrac{1}{2},\sigma_2,\dots,\sigma_r)$ without lost of generality. For every subset $J\subseteq\{1,\dots,r\}$, denote by $w_J$ the element $\prod_{i\in J}w_i\in\{\pm1\}^r\subseteq\fW_r\subseteq K_r$. Then we have the Iwasawa decomposition $G_r(F)=\coprod_{J\subseteq\{1,\dots,r\}}P_r(F)w_JI_r^0$. The subspace $(\rI_{W_r}^\sigma)^{I_r}[\kappa_r^-]$ is spanned by the function $\varphi$ satisfying that for every $g\in aw_J I_r^0$ with $a\in P_r(F)$, $\varphi(g)=(-q)^{-|J|}\chi^\sigma_r(a)$. It is clear that $\varphi$ belongs to the subspace of $\rI_{W_r}^\sigma$ that is the normalized induction of $\pi_{W_1,-}^{(1/2)}\boxtimes|\;|_E^{\sigma_2}\boxtimes\cdots\boxtimes|\;|_E^{\sigma_r}$. By \cite{Yam14}*{Lemma~6.1(1)} and the formula for $L(s,\pi_{W_1,-}^{(1/2)})$ just obtained above, we know that $L(s,\pi^\sigma_{W_r,-})/L^\sigma_-(s)$ is entire. By Proposition \ref{pr:zeta}, we have
\[
\frac{Z(\xi^\sigma_-,\ff_{r,-}^{(s)})}{L^\sigma_-(s)}=\frac{(1+q)}{(-q)^rq(1+q^{2r-1})}\prod_{i=1}^{2r-1}(1-(-1)^iq^{-2s-i}),
\]
which does not vanish for $\RE s\geq 0$. By \cite{Yam14}*{Lemma~6.1(2)}, we have $L(s,\pi^\sigma_{W_r,-})=L^\sigma_-(s)$.\footnote{In \cite{Yam14}*{Lemma~6.1(2)}, it considers all unitary characters $\chi$. However, in our case where $\chi=1$, it suffices to consider the trivial character alone through the proof.}

Now we compute $\varepsilon(s,\pi,\psi_F)$. By Lemma \ref{le:intertwining} and Proposition \ref{pr:zeta}, we have
\begin{align*}
\frac{Z(\xi^\sigma_-,M^\dag_{\psi_F}(s)\ff_{r,-}^{(s)})}{L(\tfrac{1}{2}-s,\pi)}
&=-q^{4\fc rs-2s}\cdot\frac{b_{2r}(-s)}{c_r^-(-s)}\frac{c_r^-(s)}{b_{2r}(s)}\cdot
\frac{Z(\xi^\sigma_-,\ff_{r,-}^{(-s)})}{L(\tfrac{1}{2}-s,\pi)} \\
&=-q^{4\fc rs-2s}\cdot\frac{c_r^-(s)}{b_{2r}(s)}\cdot
\frac{L^\sigma_-(\tfrac{1}{2}-s)}{L(\tfrac{1}{2}-s,\pi)} \\
&=-q^{4\fc rs-2s}\cdot\frac{c_r^-(s)}{b_{2r}(s)}.
\end{align*}
Similarly, we have
\[
\frac{Z(\xi^\sigma_-,\ff_{r,-}^{(s)})}{L(s+\tfrac{1}{2},\pi)}=
\frac{c_r^-(s)}{b_{2r}(s)}\cdot\frac{L^\sigma_-(s)}{L(s+\tfrac{1}{2},\pi)}=\frac{c_r^-(s)}{b_{2r}(s)}.
\]
Combining these two identities, we have
\[
\frac{Z(\xi^\sigma_-,M^\dag_{\psi_F}(s)\ff_{r,-}^{(s)})}{L(\tfrac{1}{2}-s,\pi)}=
-q^{(2\fc r-1)(2(s+1/2)-1)}\cdot\frac{Z(\xi^\sigma_-,\ff_{r,-}^{(s)})}{L(s+\tfrac{1}{2},\pi)},
\]
which implies $\varepsilon(s,\pi,\psi_F)=-q^{(2\fc r-1)(2s-1)}$ by \cite{Yam14}*{Theorem~5.2}.

Part (2) is proved.
\end{proof}

\begin{remark}
When $\sigma$ contains neither $\tfrac{1}{2}$ nor $-\tfrac{1}{2}$, by \cite{LTXZZ}*{Remark~C.2.4}, $\pi^\sigma_{W_r,-}$ is unramified with respect to either $K_r$ or the other (conjugacy class of) hyperspecial maximal compact subgroup that is not conjugate to $K_r$ in $G_r(F)$. In particular, we have $L(s,\pi^\sigma_{W_r,-})=L^\sigma_+(s)$ and $\varepsilon(s,\pi^\sigma_{W_r,-},\psi_F)=q^{2\fc r(2s-1)}$.
\end{remark}

\section{Consequences for the theta correspondence}
\label{ss:theta}

Let $(V,(\;,\;)_V)$ be a hermitian space over $E$ of even dimension, with $d=d(V)$, $s=s(V)$, and $\epsilon=\epsilon(V)$.

For an irreducible admissible representation $\pi$ of $G_r(F)$, let $\Theta(\pi,V)$ be the $\pi$-isotypic quotient of $\cV_{W_r,V}$, which is an admissible representation of $H_V(F)$, and $\theta(\pi,V)$ its maximal semisimple quotient. By \cite{Wal90} and \cite{GT16}, $\theta(\pi,V)$ is either zero, or an irreducible admissible representation of $H_V(F)$, known as the \emph{theta lifting} of $\pi$ to $V$ (with respect to the additive character $\psi_F$ and the trivial splitting character). In particular, we have two tautological surjective $G_r(F)\times H_V(F)$-intertwining maps
\begin{align}\label{eq:theta}
\Theta_\pi\colon C^\infty_c(V^r)\to\pi\boxtimes\Theta(\pi,V),\qquad
\theta_\pi\colon C^\infty_c(V^r)\to\pi\boxtimes\theta(\pi,V).
\end{align}

Recall that we have identified $W_r^\Box$ with $W_{2r}$ via the basis $\{e^\Box_1,\dots,e^\Box_{4r}\}$ in Section \ref{ss:doubling}. Put $s_0\coloneqq d-r$. Then we have the $G_r^\Box(F)$-intertwining map $C^\infty_c(V^{2r})\to\rI_r^\Box(s_0)$ sending $\Phi$ to $f_\Phi$ defined by the formula $f_\Phi(g)\coloneqq\omega_{W_{2r},V}(g)\Phi(0)$. Let $f^{(s)}_\Phi$ be the standard section of $\rI_r^\Box(s)$ whose value at $s_0$ is $f_\Phi$.

\begin{lem}\label{le:theta}
We have $f^{(s)}_\Phi=\ff^{(s)}_{r,\epsilon}$ if $\Phi=\CF_{\Lambda_V^{2r}}$.
\end{lem}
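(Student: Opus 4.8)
The plan is to compute both sides of the claimed identity directly from the Schrödinger-model formulas for $\omega_{W_{2r},V}$ recalled in Section~\ref{ss:weil}, and to match them against the explicit description of $\ff^{(s)}_{r,\epsilon}$ in terms of the Bruhat cells $\cB^\Box_i$. First I would unwind the definition: $f_\Phi(g)=\omega_{W_{2r},V}(g)\Phi(0)$ for $\Phi=\CF_{\Lambda_V^{2r}}$, so I need to evaluate $\bigl(\omega_{W_{2r},V}(g)\CF_{\Lambda_V^{2r}}\bigr)(0)$ for $g\in G^\Box_r(F)=G_{2r}(F)$. Since $f^{(s)}_\Phi$ is the \emph{standard} section of $\rI^\Box_r(s)$ extending $f_\Phi$, and $\ff^{(s)}_{r,\epsilon}$ is the standard section determined by its values on $K^\Box_r$, it suffices to check the equality of their restrictions to $K^\Box_r=K_{2r}$, i.e. to show that for $k\in\cB^\Box_i$ one has $\bigl(\omega_{W_{2r},V}(k)\CF_{\Lambda_V^{2r}}\bigr)(0)=(-q)^{((\epsilon1)-1)i/2}$.

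The key structural input is that $\CF_{\Lambda_V^{2r}}\in\Sph(V^{2r})$ for the group $G_{2r}$, which is exactly Lemma~\ref{le:generator} (applied with $r$ replaced by $2r$): it is $I_{2r}^0$-invariant and $\dC[I_{2r}\backslash K_{2r}/I_{2r}]$ acts on it through the character $\kappa_{2r}^\epsilon$, where $\epsilon=\epsilon(V)$ since $\Lambda_V$ is an almost-self-dual (resp.\ self-dual) lattice precisely when $\epsilon(V)=-$ (resp.\ $+$). Consequently the function $k\mapsto(\omega_{W_{2r},V}(k)\CF_{\Lambda_V^{2r}})(0)$ on $K_{2r}$ is $I_{2r}^0$-bi-invariant on the left (it is $I_{2r}^0$-invariant because $\CF_{\Lambda_V^{2r}}$ is, and on the right because evaluation at $0$ is $L_V$- and in particular $I_{2r}^0$-stable after using the Weil-action formula for $I_r^0\subseteq K_r$ acting by integral linear substitutions fixing $\Lambda_V^{2r}$). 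Thus it is a linear combination $\sum_i c_i\CF_{\cB^\Box_i}$, and the eigenvector computation of Lemma~\ref{le:eigenvector} forces $c_i=(-q)^{((\epsilon1)-1)i/2}c_0$. So the whole problem reduces to the single normalization $c_0=(\omega_{W_{2r},V}(1_{4r})\CF_{\Lambda_V^{2r}})(0)=\CF_{\Lambda_V^{2r}}(0)=1$, which is immediate since $0\in\Lambda_V^{2r}$. This also automatically matches the value of $\ff^{(s_0)}_{r,\epsilon}$ on the base cell, and both being standard sections of $\rI^\Box_r(s)$ with the same restriction to $K^\Box_r$, they agree identically.

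One point requiring a little care, and which I expect to be the main obstacle, is verifying that $f_\Phi$ for $\Phi=\CF_{\Lambda_V^{2r}}$ actually lands in the correct degenerate principal series $\rI^\Box_r(s_0)$ with $s_0=d-r$, i.e.\ that the $P^\Box_r$-equivariance exponent comes out right; this is where the choice of basis $\{e^\Box_1,\dots,e^\Box_{4r}\}$, the identification $W^\Box_r\simeq W_{2r}$, and the factor $(\epsilon1)^{2r}=1$ in the Weil action of the long Weyl element all have to be tracked. Concretely, I would verify $f_\Phi(m(a)g)=|\dtm a|_E^{d}\,|\dtm a|_E^{-r}\cdot|\dtm a|_E^{(\dots)}f_\Phi(g)$ using the formula $\omega_{W_{2r},V}(m(a))\phi(x)=|\dtm a|_E^d\phi(xa)$ together with the relation between $\Delta$ and $m^{-1}$ on $P^\Box_r=P_{2r}^0$, and confirm the modulus-character bookkeeping gives precisely $|\;|_E^{s_0+r}\circ\Delta$, matching the normalization $\ff^{(s)}_{r,\epsilon}(p\,g)=(-q)^{\cdots}|\Delta(p)|_E^{s+r}$ at $s=s_0$. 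Once the equivariance exponent is confirmed, the cell-by-cell matching above is routine, and the lemma follows.
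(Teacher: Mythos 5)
Your argument is correct and is exactly the paper's (the paper's proof is the single sentence that the lemma follows from Lemma \ref{le:eigenvector} and Lemma \ref{le:generator}, which you have simply unwound): restrict to $K_{2r}$, observe bi-$I_{2r}^0$-invariance, run the eigenvector recursion cell by cell, and normalize at the identity. The only imprecision is your parenthetical justification of the bi-invariance of $k\mapsto(\omega_{W_{2r},V}(k)\CF_{\Lambda_V^{2r}})(0)$: right invariance comes from the $I_{2r}^0$-fixedness of $\CF_{\Lambda_V^{2r}}$, while left invariance comes from the left $P_r^\Box(F)$-equivariance of $f_\Phi$ (evaluation at $0$ kills $N_r^\Box(F)$ and transforms by $|\Delta|_E^{d}$ under the Levi) combined with $(K_{2r}\cap P_r^\Box(F))\cdot I_{2r}=I_{2r}^0$ — not from anything involving $L_V$.
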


\begin{proof}
This is a consequence of Lemma \ref{le:eigenvector} and Lemma \ref{le:generator}.
\end{proof}

\begin{theorem}\label{th:theta}
Consider the representation $\pi\coloneqq\pi_{W_r,\epsilon}^{\overleftarrow\sigma}$ of $G_r(F)$ (Notation \ref{no:pi_w}) for an element $\sigma=(\sigma_{s+1-\min\{r,s\}},\dots,\sigma_s)\in(\dC/\tfrac{\pi i}{\log q}\dZ)^{\min\{r,s\}}$.
\begin{enumerate}
  \item We have $\Theta_\pi(\CF_{\Lambda_V^r})\neq 0$, and that $\pi_V^{\overrightarrow\sigma}$ (Notation \ref{no:pi_v}) is a constituent of $\Theta(\pi,V)$;

  \item Assume Assumption \ref{as:invariant}. We have $\theta_\pi(\CF_{\Lambda_V^r})\neq 0$ and $\theta(\pi,V)\simeq\pi_V^{\overrightarrow\sigma}$. Moreover, if $\rI_V^{\overrightarrow\sigma}$ is irreducible, then $\Theta(\pi,V)=\theta(\pi,V)$.
\end{enumerate}
Here,
\begin{align*}
\begin{dcases}
\overleftarrow\sigma\coloneqq(-\sigma,-\tfrac{(\epsilon 1)}{2},-\tfrac{2+(\epsilon1)}{2},\dots,-\tfrac{2(r-\min\{r,s\}-1)+(\epsilon 1)}{2})\in(\dC/\tfrac{\pi i}{\log q}\dZ)^r,\\
\overrightarrow\sigma\coloneqq(-\tfrac{2-(\epsilon 1)}{2},-\tfrac{4-(\epsilon 1)}{2},\dots,-\tfrac{2(s-\min\{r,s\})-(\epsilon 1)}{2},\sigma)\in(\dC/\tfrac{\pi i}{\log q}\dZ)^s,
\end{dcases}
\end{align*}
are as in Lemma \ref{le:principal}.
\end{theorem}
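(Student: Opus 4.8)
The plan is to bracket the theta lift: an upper bound from the intertwining maps $\cZ_\sigma$ of Lemma~\ref{le:principal}, a non-vanishing (lower bound) from the doubling zeta integral of Proposition~\ref{pr:zeta}, and then a passage from the big lift $\Theta(\pi,V)$ to the small one $\theta(\pi,V)$ — together with the asserted generation by $\CF_{\Lambda_V^r}$ — supplied by Assumption~\ref{as:invariant}. Throughout set $\pi\coloneqq\pi^{\overleftarrow\sigma}_{W_r,\epsilon}$, and recall $\CF_{\Lambda_V^r}\in\Sph(V^r)$ (Lemma~\ref{le:generator}) and $\pi^\vee\simeq\pi$.

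\emph{Upper bound.} For $\RE\sigma_i$ in a suitable half-space, $\pi$ occurs as a quotient of $\rI^{\overleftarrow\sigma}_{W_r}$, so composing $\cZ_\sigma$ with $\rI^{\overleftarrow\sigma}_{W_r}\boxtimes\rI^{\overrightarrow\sigma}_V\twoheadrightarrow\pi\boxtimes\rI^{\overrightarrow\sigma}_V$ gives a $G_r(F)\times H_V(F)$-map $\omega_{W_r,V}\to\pi\boxtimes\rI^{\overrightarrow\sigma}_V$. It is nonzero: $\cZ_\sigma(\CF_{\Lambda_V^r})$ is a nonzero element of the line $(\rI^{\overleftarrow\sigma}_{W_r})^{I_r}[\kappa_r^\epsilon]\boxtimes(\rI^{\overrightarrow\sigma}_V)^{L_V}$ (nonzero since $Z_\sigma(\CF_{\Lambda_V^r})\neq0$, as used in the proof of Proposition~\ref{pr:annihilator}), and its projection to $\pi^{I_r}[\kappa_r^\epsilon]$ survives by multiplicity one. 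By the universal property of $\Theta(\pi,V)$ this produces a nonzero $H_V(F)$-map $\Theta(\pi,V)\to\rI^{\overrightarrow\sigma}_V$ whose image $M$ is a nonzero quotient of $\Theta(\pi,V)$ meeting $(\rI^{\overrightarrow\sigma}_V)^{L_V}$; hence by Notation~\ref{no:pi_v} the unique $L_V$-spherical constituent $\pi^{\overrightarrow\sigma}_V$ of $\rI^{\overrightarrow\sigma}_V$ is a constituent of $M$, hence of $\Theta(\pi,V)$, and — since once $\Theta(\pi,V)\neq0$ it and each of its nonzero quotients share the cosocle $\theta(\pi,V)$ (\cite{GT16}) — $\theta(\pi,V)$ is a quotient of $M$, hence a subquotient of $\rI^{\overrightarrow\sigma}_V$. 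Arbitrary $\sigma$ is reached by meromorphic continuation of the explicit section $\cZ_\sigma(\CF_{\Lambda_V^r})$ in $\sigma$ (the continued map still lands in $\rI^{\overleftarrow\sigma}_{W_r}\boxtimes\rI^{\overrightarrow\sigma}_V$), or, alternatively, by the compatibility of the theta correspondence with parabolic induction, which directly exhibits $\Theta(\pi,V)$ itself as a subquotient of $\rI^{\overrightarrow\sigma}_V$.

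\emph{Non-vanishing and passage to $\theta$.} The vanishing behaviour of $\Theta_\pi(\CF_{\Lambda_V^r})$ is controlled, via the basic identity of the doubling method (pairing $\Theta(\pi,V)$ with $\Theta(\pi^\vee,V)\simeq\Theta(\pi,V)$ through the natural $H_V(F)$-invariant pairing on $C^\infty_c(V^r)$, using $\omega_{\bar W_r,V}\simeq\omega_{W_r,V}^\vee$), by the doubling zeta integral attached to $\CF_{\Lambda_V^r}$: the doubled test vector is $\CF_{\Lambda_V^{2r}}$, whose associated section is $\ff^{(s)}_{r,\epsilon}$ by Lemma~\ref{le:theta}, and the relevant point is $s_0\coloneqq d-r$. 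By Proposition~\ref{pr:zeta} and Theorem~\ref{th:gcd}, the value (or leading term) of $Z(\xi^{\overleftarrow\sigma}_\epsilon,\ff^{(s)}_{r,\epsilon})=c^r_\epsilon(s)\,L^{\overleftarrow\sigma}_\epsilon(s+\tfrac{1}{2})/b_{2r}(s)$ at $s=s_0$ is a computable quantity whose non-vanishing — extracted by matching the poles and zeros of the factors in \eqref{eq:factor} — yields $\Theta_\pi(\CF_{\Lambda_V^r})\neq0$; combined with the upper bound this proves (1). Granting (1), $\theta(\pi,V)\neq0$ is irreducible; under Assumption~\ref{as:invariant}, exactness of $(\,\cdot\,)^{I_r}[\kappa_r^\epsilon]$ lets $\theta_\pi$ restrict to an $H_V(F)$-surjection $C^\infty_c(V^r)^{I_r}[\kappa_r^\epsilon]\twoheadrightarrow\pi^{I_r}[\kappa_r^\epsilon]\boxtimes\theta(\pi,V)$ whose source is generated by $\CF_{\Lambda_V^r}$, so $\theta_\pi(\CF_{\Lambda_V^r})\neq0$ and generates $\theta(\pi,V)$; being an irreducible subquotient of $\rI^{\overrightarrow\sigma}_V$ with nonzero $L_V$-invariants, $\theta(\pi,V)\simeq\pi^{\overrightarrow\sigma}_V$. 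Finally, if $\rI^{\overrightarrow\sigma}_V$ is irreducible it equals $\pi^{\overrightarrow\sigma}_V$, and $\Theta(\pi,V)$, a nonzero subquotient of it, equals $\pi^{\overrightarrow\sigma}_V=\theta(\pi,V)$.

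\emph{Main obstacle.} The delicate point is the uniformity of the upper bound in $\sigma$: outside the cone where $\pi$ is literally a quotient of $\rI^{\overleftarrow\sigma}_{W_r}$ one must either carry the meromorphic continuation of $\cZ_\sigma(\CF_{\Lambda_V^r})$ far enough while keeping the induced map nonzero, or invoke the (standard but not entirely formal) compatibility of theta with Jacquet functors to conclude that $\Theta(\pi,V)$ itself, and not merely one of its quotients, is a subquotient of $\rI^{\overrightarrow\sigma}_V$ — which is precisely what makes the last assertion ($\Theta=\theta$ when $\rI^{\overrightarrow\sigma}_V$ is irreducible) go through. A secondary, purely computational nuisance is verifying that the relevant special value of the doubling zeta integral at $s_0=d-r$ is nonzero, which entails separating the cases $d<r$, $d=r$ and $d>r$ (equivalently $r>s$ versus $r\leq s$) when tracking the factors in \eqref{eq:factor}.
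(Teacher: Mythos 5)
Your architecture for the non-vanishing and for part (2) matches the paper: detect $\Theta_\pi(\CF_{\Lambda_V^r})\neq 0$ by the doubling seesaw with doubled datum $\CF_{\Lambda_V^{2r}}$, whose section is $\ff^{(s)}_{r,\epsilon}$ by Lemma \ref{le:theta}, and evaluate the normalized zeta integral $c^r_\epsilon(s)/b_{2r}(s)$ at $s_0=d-r$ using Proposition \ref{pr:zeta} and Theorem \ref{th:gcd}; then use Assumption \ref{as:invariant} to make $C^\infty_c(V^r)^{I_r}[\kappa_r^\epsilon]\to\theta(\pi,V)$ surjective and conclude $\theta_\pi(\CF_{\Lambda_V^r})\neq 0$. (On the analytic side, be aware that when $s_0<0$ the standard section is not good; the paper replaces it by $\zeta_E(s-s_0)f^{(s)}_\Phi$ via \cite{Yam14}*{Lemma~8.2} and checks that the simple zero of $c^r_\epsilon(s)/b_{2r}(s)$ at $s_0\in\{-1,\dots\}$ cancels the resulting pole — that is the precise content of the ``pole/zero matching'' you defer.)

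The genuine gap is the one you flag yourself, and it is not a side issue: your identification of $\pi_V^{\overrightarrow\sigma}$ inside $\Theta(\pi,V)$, and your proof that $\theta(\pi,V)$ is a subquotient of $\rI_V^{\overrightarrow\sigma}$, both run through the map $\cZ_\sigma$ of Lemma \ref{le:principal}, which is only defined for $\RE\sigma_i\gg 0$; neither of your two proposed repairs (meromorphic continuation of $\cZ_\sigma(\CF_{\Lambda_V^r})$ keeping the induced map nonzero, or theta--Jacquet compatibility) is carried out, so general $\sigma$ — the whole point of the theorem — is not covered. The paper's device here is Proposition \ref{pr:annihilator}, which you never invoke: the annihilator of the Hecke module $\cS_{W_r,V}$ equals $\cI_{W_r,V}$ uniformly (the maps $\cZ_\sigma$ enter that proof only in the convergent range, together with Lemma \ref{le:vanishing} and a support/density argument over the smooth spectrum). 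Once $\Theta_\pi(\CF_{\Lambda_V^r})\neq 0$, the nonzero image in $\pi^{I_r}[\kappa_r^\epsilon]\boxtimes\Theta(\pi,V)^{L_V}$ is a quotient of $\cS_{W_r,V}$, hence is killed by $\cI_{W_r,V}$; this pins the $\cH_V$-eigenvalue on $\Theta(\pi,V)^{L_V}$ to that of $\overrightarrow\sigma$ and gives (1) with no continuation in $\sigma$. The same mechanism, combined with the cyclicity of $\Theta(\pi,V)$ on the $L_V$-fixed vector $\Theta_\pi(\CF_{\Lambda_V^r})$ furnished by Assumption \ref{as:invariant}, is what yields $\Theta(\pi,V)\simeq\rI_V^{\overrightarrow\sigma}$ when the latter is irreducible; you need this cyclicity in any case, because your $\cZ_\sigma$ argument only realizes a \emph{quotient} of $\Theta(\pi,V)$ inside $\rI_V^{\overrightarrow\sigma}$, which, as you note, is not enough for the final assertion.
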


\begin{proof}
In the proof of this proposition, $s$ will also serve as a complex variable like in Section \ref{ss:doubling}, which should be clear from the context.

For $\Phi\in C^\infty_c(V^{2r})$, define a meromorphic section
\[
\tilde{f}^{(s)}_\Phi\coloneqq
\begin{dcases}
f^{(s)}_\Phi, &\text{if }s_0\geq 0, \\
\zeta_E(s-s_0)f^{(s)}_\Phi, &\text{if }s_0< 0.
\end{dcases}
\]
By \cite{Yam14}*{Lemma~8.2}, $\tilde{f}^{(s)}_\Phi$ is good at $s=s_0$. Since $\tilde{f}^{(s)}_\Phi$ is holomorphic at $s\neq s_0$, it is a good section. The assignment
\[
(\xi,\Phi)\mapsto\cZ(\xi,\Phi)\coloneqq
\frac{Z(\xi,\tilde{f}^{(s)}_\Phi)}{L(s+\tfrac{1}{2},\pi)}|_{s=s_0}\in\dC
\]
defines a $G_r(F)\times G_r(F)$-intertwining map $C^\infty_c(V^{2r})_{H_V(F)}\to\pi\boxtimes\pi^\vee$. We claim that
\begin{align}\label{eq:theta1}
\cZ(\xi_r^{\overleftarrow\sigma},\CF_{\Lambda_V^r}\otimes\CF_{\Lambda_V^r})\neq 0.
\end{align}

Assuming the claim, we prove the theorem. By \eqref{eq:theta1} and the seesaw identity, we have $\Theta_\pi(\CF_{\Lambda_V^r})\neq 0$, hence $\Theta(\pi,V)^{L_V}\neq\{0\}$. By Proposition \ref{pr:annihilator}, the ideal $\cI_{W_r,V}$ annihilates $(\pi_{W_r,\epsilon}^{\overleftarrow\sigma})^{I_r}[\kappa_r^\epsilon]\boxtimes\Theta(\pi,V)^{L_V}$, which implies that $\pi_V^{\overrightarrow\sigma}$ is a constituent of $\Theta(\pi,V)$. Thus, (1) is proved.

For (2), we have $\theta(\pi,V)\neq 0$ by (1). Consider the two $H_V(F)$-intertwining maps
\begin{align*}
C^\infty_c(V^r)^{I_r}[\kappa_r^\epsilon]&\to(\pi_{W_r,\epsilon}^{\overleftarrow\sigma})^{I_r}[\kappa_r^\epsilon]\boxtimes\Theta(\pi,V)
\simeq\Theta(\pi,V), \\
C^\infty_c(V^r)^{I_r}[\kappa_r^\epsilon]&\to(\pi_{W_r,\epsilon}^{\overleftarrow\sigma})^{I_r}[\kappa_r^\epsilon]\boxtimes\theta(\pi,V)
\simeq\theta(\pi,V),
\end{align*}
induced from $\Theta_\pi$ and $\theta_\pi$, respectively, both of which are surjective. Under Assumption \ref{as:invariant}, we know that
$\Theta(\pi,V)$ and $\theta(\pi,V)$ are generated by $\Theta_\pi(\CF_{\Lambda_V^r})$ and $\theta_\pi(\CF_{\Lambda_V^r})$, respectively. In particular, $\theta(\pi,V)^{L_V}\neq\{0\}$. By Proposition \ref{pr:annihilator}, we have $\theta(\pi,V)\simeq\pi_V^{\overrightarrow\sigma}$, and $\Theta(\pi,V)\simeq\rI_V^{\overrightarrow\sigma}$ if $\rI_V^{\overrightarrow\sigma}$ is irreducible. Thus, (2) is proved.

Now it remains to show the claim \eqref{eq:theta1}. By Lemma \ref{le:theta}, Proposition \ref{pr:zeta}, and Theorem \ref{th:gcd}, we have
\[
\frac{Z(\xi_r^{\overleftarrow\sigma},f^{(s)}_{\CF_{\Lambda_V^r}\otimes\CF_{\Lambda_V^r}})}{L(s+\tfrac{1}{2},\pi)}
=\frac{c_\epsilon^r(s)}{b_{2r}(s)},
\]
which is holomorphic and nonvanishing for $\RE s\geq 0$. Thus, \eqref{eq:theta1} holds when $s_0\geq 0$. Now suppose that $s_0<0$. Then $s_0$ belongs to $\{-1,\dots,-r\}$ and $\{-1,\dots,-r+1\}$ when $\epsilon=+$ and $\epsilon=-$, respectively. It is clear that in both cases, $c_\epsilon^r(s)/b_{2r}(s)$ has a simple zero at $s_0$. Thus, \eqref{eq:theta1} holds when $s_0<0$ as well.

The theorem is proved.
\end{proof}

Now we can conclude this note with the proof of Theorem \ref{th:support}.

\begin{proof}[Proof of Theorem \ref{th:support}]
Part (1) has been proved in Proposition \ref{pr:annihilator}. In particular, $\Sph(V^r)$ is a module over $(\cH_{W_r}^\epsilon\otimes\cH_V)/\cI_{W_r,V}$.

For (2), it suffices to consider the $\cH_{W_r}^\epsilon\otimes\cH_V$-module $\Sph(V^r)$. We have $\CF_{\Lambda_V^r}\in\Sph(V^r)$ by Lemma \ref{le:generator}. By Proposition \ref{pr:invariant} and Theorem \ref{th:theta}(2), for every $\sigma\in(\dC/\tfrac{\pi i}{\log q}\dZ)^{\min\{r,s\}}$, the map $\theta_\pi$ \eqref{eq:theta} induces a map
\[
\Sph(V^r)\to(\pi_{W_r,\epsilon}^{\overleftarrow\sigma})^{I_r}[\kappa_r^\epsilon]\boxtimes(\pi_V^{\overrightarrow\sigma})^{L_V}
\]
of $\cH_{W_r}^\epsilon\otimes\cH_V$-modules, under which the image of $\CF_{\Lambda_V^r}$ is nonzero. Therefore, the section $\CF_{\Lambda_V^r}$ of the module $\Sph(V^r)$ is nowhere vanishing on $\Spec(\cH_{W_r}^\epsilon\otimes\cH_V)/\cI_{W_r,V}$, that is, $\CF_{\Lambda_V^r}$ is free. Thus, (2) is proved.
\end{proof}

\begin{bibdiv}
\begin{biblist}

\bib{Car79}{article}{
   author={Cartier, P.},
   title={Representations of $p$-adic groups: a survey},
   conference={
      title={Automorphic forms, representations and $L$-functions},
      address={Proc. Sympos. Pure Math., Oregon State Univ., Corvallis,
      Ore.},
      date={1977},
   },
   book={
      series={Proc. Sympos. Pure Math., XXXIII},
      publisher={Amer. Math. Soc., Providence, R.I.},
   },
   date={1979},
   pages={111--155},
   review={\MR{546593}},
}

\bib{Cas80}{article}{
   author={Casselman, W.},
   title={The unramified principal series of $\fp$-adic groups. I. The spherical function},
   journal={Compositio Math.},
   volume={40},
   date={1980},
   number={3},
   pages={387--406},
   issn={0010-437X},
   review={\MR{571057}},
}

\bib{CIK71}{article}{
   author={Curtis, C. W.},
   author={Iwahori, N.},
   author={Kilmoyer, R.},
   title={Hecke algebras and characters of parabolic type of finite groups with $(B,N)$-pairs},
   journal={Inst. Hautes \'{E}tudes Sci. Publ. Math.},
   number={40},
   date={1971},
   pages={81--116},
   issn={0073-8301},
   review={\MR{347996}},
}

\bib{GS12}{article}{
   author={Gan, Wee Teck},
   author={Savin, Gordan},
   title={Representations of metaplectic groups I: epsilon dichotomy and
   local Langlands correspondence},
   journal={Compos. Math.},
   volume={148},
   date={2012},
   number={6},
   pages={1655--1694},
   issn={0010-437X},
   review={\MR{2999299}},
   doi={10.1112/S0010437X12000486},
}

\bib{GT16}{article}{
   author={Gan, Wee Teck},
   author={Takeda, Shuichiro},
   title={A proof of the Howe duality conjecture},
   journal={J. Amer. Math. Soc.},
   volume={29},
   date={2016},
   number={2},
   pages={473--493},
   issn={0894-0347},
   review={\MR{3454380}},
   doi={10.1090/jams/839},
}

\bib{GPR}{book}{
   author={Gelbart, Stephen},
   author={Piatetski-Shapiro, Ilya},
   author={Rallis, Stephen},
   title={Explicit constructions of automorphic $L$-functions},
   series={Lecture Notes in Mathematics},
   volume={1254},
   publisher={Springer-Verlag, Berlin},
   date={1987},
   pages={vi+152},
   isbn={3-540-17848-1},
   review={\MR{892097}},
   doi={10.1007/BFb0078125},
}

\bib{GG11}{article}{
   author={Gong, Z.},
   author={Greni\'{e}, L.},
   title={An inequality for local unitary theta correspondence},
   language={English, with English and French summaries},
   journal={Ann. Fac. Sci. Toulouse Math. (6)},
   volume={20},
   date={2011},
   number={1},
   pages={167--202},
   issn={0240-2963},
   review={\MR{2830396}},
}

\bib{HKP}{article}{
   author={Haines, Thomas J.},
   author={Kottwitz, Robert E.},
   author={Prasad, Amritanshu},
   title={Iwahori-Hecke algebras},
   journal={J. Ramanujan Math. Soc.},
   volume={25},
   date={2010},
   number={2},
   pages={113--145},
   issn={0970-1249},
   review={\MR{2642451}},
}

\bib{HKS96}{article}{
   author={Harris, Michael},
   author={Kudla, Stephen S.},
   author={Sweet, William J.},
   title={Theta dichotomy for unitary groups},
   journal={J. Amer. Math. Soc.},
   volume={9},
   date={1996},
   number={4},
   pages={941--1004},
   issn={0894-0347},
   review={\MR{1327161}},
   doi={10.1090/S0894-0347-96-00198-1},
}

\bib{How79}{article}{
   author={Howe, R.},
   title={$\theta $-series and invariant theory},
   conference={
      title={Automorphic forms, representations and $L$-functions},
      address={Proc. Sympos. Pure Math., Oregon State Univ., Corvallis,
      Ore.},
      date={1977},
   },
   book={
      series={Proc. Sympos. Pure Math., XXXIII},
      publisher={Amer. Math. Soc., Providence, R.I.},
   },
   date={1979},
   pages={275--285},
   review={\MR{546602}},
}

\bib{LL}{article}{
   author={Li, Chao},
   author={Liu, Yifeng},
   title={Chow groups and $L$-derivatives of automorphic motives for unitary groups},
   note={\href{https://arxiv.org/abs/2006.06139}{arXiv:2006.06139}},
}

\bib{Li92}{article}{
   author={Li, Jian-Shu},
   title={Nonvanishing theorems for the cohomology of certain arithmetic
   quotients},
   journal={J. Reine Angew. Math.},
   volume={428},
   date={1992},
   pages={177--217},
   issn={0075-4102},
   review={\MR{1166512}},
   doi={10.1515/crll.1992.428.177},
}

\bib{Liu11}{article}{
   author={Liu, Yifeng},
   title={Arithmetic theta lifting and $L$-derivatives for unitary groups,
   I},
   journal={Algebra Number Theory},
   volume={5},
   date={2011},
   number={7},
   pages={849--921},
   issn={1937-0652},
   review={\MR{2928563}},
}

\bib{LTXZZ}{article}{
   label={LTXZZ},
   author={Liu, Yifeng},
   author={Tian, Yichao},
   author={Xiao, Liang},
   author={Zhang, Wei},
   author={Zhu, Xinwen},
   title={On the Beilinson--Bloch--Kato conjecture for Rankin--Selberg motives},
   note={\href{https://arxiv.org/abs/1912.11942}{arXiv:1912.11942}},
}

\bib{Mat64}{article}{
   author={Matsumoto, Hideya},
   title={G\'{e}n\'{e}rateurs et relations des groupes de Weyl g\'{e}n\'{e}ralis\'{e}s},
   language={French},
   journal={C. R. Acad. Sci. Paris},
   volume={258},
   date={1964},
   pages={3419--3422},
   issn={0001-4036},
   review={\MR{183818}},
}

\bib{Ral82}{article}{
   author={Rallis, Stephen},
   title={Langlands' functoriality and the Weil representation},
   journal={Amer. J. Math.},
   volume={104},
   date={1982},
   number={3},
   pages={469--515},
   issn={0002-9327},
   review={\MR{658543}},
   doi={10.2307/2374151},
}

\bib{Wal90}{article}{
   author={Waldspurger, J.-L.},
   title={D\'{e}monstration d'une conjecture de dualit\'{e} de Howe dans le cas
   $p$-adique, $p\neq 2$},
   language={French},
   conference={
      title={Festschrift in honor of I. I. Piatetski-Shapiro on the occasion
      of his sixtieth birthday, Part I},
      address={Ramat Aviv},
      date={1989},
   },
   book={
      series={Israel Math. Conf. Proc.},
      volume={2},
      publisher={Weizmann, Jerusalem},
   },
   date={1990},
   pages={267--324},
   review={\MR{1159105}},
}

\bib{Yam14}{article}{
   author={Yamana, Shunsuke},
   title={L-functions and theta correspondence for classical groups},
   journal={Invent. Math.},
   volume={196},
   date={2014},
   number={3},
   pages={651--732},
   issn={0020-9910},
   review={\MR{3211043}},
   doi={10.1007/s00222-013-0476-x},
}

\end{biblist}
\end{bibdiv}

\if false

\newpage

\textbf{Reply to the referee:}

\begin{enumerate}
  \item Add a footnote.

  \item Corrected.

  \item Add an explanation.

  \item Corrected.

  \item Corrected.

  \item Yes, if $\chi$ is unramified. However, for arithmetic application, we need the global $\chi$ to be orthogonal-conjugate selfdual, which makes $\chi$ trivial at a ``good'' inert place.

  \item Corrected.

  \item This is a very good question. I believe the answer is YES, but I do not see immediately a proof unless $n=2$ in which case one can argue by the genus of the Shimura curve. However, in the global application, namely, the arithmetic inner product formula in my recent work with Chao Li, we have no restriction for the local component at a split place (the current article still requires it to be a principal series, possibly ramified; but we will lift it in a subsequent article). Then it is not hard to see that such automorphic representation exists.

  \item Add a reference.

  \item Add more details in that proof.

  \item Add a reference for how this correspondence is obtained. This remark is not really used in this article, just providing a viewpoint.

  \item Add a reference.

  \item Use a different notation.

  \item Add a footnote.

  \item Corrected.

  \item Your guess is correct. In fact, this has already been proved in \cite{LTXZZ}*{Lemma~C.2.3}.

  \item I think the answer is the (hermitian) Siegel parahoric subgroup. The question boils down to the study of the so-called Tate--Thompson representation $\Omega_{2r}$ in \cite{LTXZZ}*{Section~C.2}, and the answer is implicitly contained in \cite{LTXZZ}*{Proposition~C.2.1}.
\end{enumerate}

\fi

\end{document}